\theoremstyle{plain}
\newtheorem{theorem}{Theorem}[section]
\newtheorem{proposition}[theorem]{Proposition}
\newtheorem{lemma}[theorem]{Lemma}
\newtheorem{corollary}[theorem]{Corollary}
\theoremstyle{definition}
\newtheorem{definition}[theorem]{Definition}
\theoremstyle{remark}
\newtheorem{remark}[theorem]{Remark}
\newtheorem{example}[theorem]{Example}
\DeclareMathOperator\arctanh{arctanh}
\newcommand{\mm}{\mathfrak m}
\newcommand{\R}{\mathbb R}
\newcommand{\Per}{\mathrm{Per}}
\newcommand{\di}{\mathsf{d}}
\begin{document}
\title[
The equality case in Cheeger's and Buser's inequalities on $ \mathsf{RCD}$ spaces
]
{
The equality case in Cheeger's and Buser's inequalities on $ \mathsf{RCD}$ spaces
}
\author{Nicol\`o De Ponti}\thanks{Nicol\`o De Ponti: Scuola Internazionale Superiore di Studi Avanzati (SISSA), Trieste,  Italy, \\email: ndeponti@sissa.it} 
\author{Andrea Mondino}\thanks{Andrea Mondino: Mathematical Institute, University of Oxford, UK, \\email: Andrea.Mondino@maths.ox.ac.uk} 
\author{Daniele Semola}\thanks{Daniele Semola: Scuola Normale Superiore, Pisa, Italy, \\email: Daniele.Semola@maths.ox.ac.uk}

\begin{abstract}
We prove that the sharp Buser's inequality obtained in the framework of $\mathsf{RCD}(1,\infty)$ spaces by the first two authors  \cite{DeMo} is rigid, i.e. equality is obtained if and only if the space splits isomorphically a Gaussian. The result is new even in the smooth setting.   We also show that the equality in Cheeger's inequality is never attained in the setting of $\mathsf{RCD}(K,\infty)$ spaces with finite diameter or positive curvature, and we provide several examples of spaces with Ricci curvature bounded below where these assumptions are not satisfied and the equality is attained.
\end{abstract}

\maketitle
\section{Introduction}
In the paper we consider a complete and separable metric space $(X, \mathsf{d})$ endowed with a Borel measure $\mm$, finite on bounded sets. The triple $(X, \mathsf{d}, \mm)$ is called \emph{metric measure space}, m.m.s. for short.
The space of real-valued Lipschitz (resp. bounded Lipschitz, Lipschitz with bounded support, Lipschitz on bounded sets) functions over $X$ will be denoted by $\mathsf{Lip}(X)$ (resp. $\mathsf{Lip}_{b}(X)$, $\mathsf{Lip}_{bs}(X)$, $\mathsf{Lip}_{loc}(X)$). The slope of a function $f:X\rightarrow \R$ at $x\in X$ is defined by
\begin{equation}
\mathrm{lip}(f)(x):=\limsup_{y\rightarrow x} \frac{|f(y)-f(x)|}{\di(y,x)}\, ,
\end{equation}  
with the convention $\mathrm{lip}(f)(x)=0$ if $x$ is an isolated point.

We introduce the following relevant definitions: in case $\mm(X)<\infty$, for any $1<p<\infty$ we set
\begin{equation}\label{eq:defla1Intro}
\lambda_{1,p}(X)= \inf\bigg\{\frac{\int_X \mathrm{lip}(f)^p \di\mm}{\int_X |f|^p\,\di\mm}: \ 0\not\equiv f\in \mathsf{Lip}_{bs}(X), \int_X |f|^{p-2}f\, \di\mm=0\bigg\}.
\end{equation} 

If $\mm(X)=\infty$ and $1<p<\infty$ we instead set 
\begin{equation}\label{eq:defla0Intro}
\lambda_{0,p}(X)= \inf\bigg\{\frac{\int_X \mathrm{lip}(f)^p \di\mm}{\int_X |f|^p\,\di\mm}: \ 0\not\equiv f\in \mathsf{Lip}_{bs}(X)\bigg\}.
\end{equation} 
When there is no risk of confusion we will drop the dependence on the ambient space writing $\lambda_{0,p}$ and $\lambda_{1,p}$.  Moreover, the shorthand notation $\lambda_0,\lambda_{1}$ will be used to refer to $\lambda_{0,2}$, $\lambda_{1,2}$ respectively, when there is no risk of confusion.  Under quite general assumptions on the m.m.s. $(X,\di,\mm)$, the quantities $\lambda_0$ and $\lambda_1$ correspond to the first two eigenvalues of the Laplace operator (see \autoref{thm:compembedding}). 
\medskip

Let $A\subset X$ be a Borel set, the \textit{perimeter} $\mathrm{Per}(A)$ is defined as:
\begin{equation*}
\mathrm{Per}(A):=\inf\bigg\{\liminf_{n\rightarrow \infty}\int_X \mathrm{lip}(f_n)\,\di\mm: f_n\in \mathsf{Lip}_{loc}(X), f_n\rightarrow \chi_A \ \mathrm{in} \ L^1(X,\mm)\bigg\},
\end{equation*}
where we denote by $\chi_A:X\to\{0,1\}$ the indicator function of the set $A\subset X$.

The \textit{Cheeger constant} of the metric measure space $(X,\mathsf{d},\mm)$ is defined as follows: 
\begin{equation}\label{eq:defChConst}
h(X):=
\begin{cases}
\inf  \left\{\frac{\Per(A)}{\mm(A)}\, :\, A\subset X \text{ Borel with $0<\mm(A)\leq \mm(X)/2$} \right\} & \text {if } \mm(X)<\infty, \\
\inf  \left\{\frac{\Per(A)}{\mm(A)}\, :\, A\subset X \text{ Borel with $0<\mm(A)<\infty$} \right\} &\text {if } \mm(X)=\infty.
 \end{cases}
\end{equation}

In \cite{ChIn} Cheeger obtained the following celebrated inequality, now known as \emph{Cheeger's inequality}:
\begin{equation}\label{eq:ChIn}
\lambda_{1}\geq \frac{1}{4} h(X)^{2}.
\end{equation}

The original result of Cheeger was in the framework of smooth and compact Riemannian manifolds, but the argument of the proof is very robust, as noticed (even earlier) by Maz'ya in \cite{Maz} (see also \cite{Grig1}), and it can be extended to  more general  frameworks.  We refer to \cite[Appendix A]{DeMo} for a proof on general metric measure spaces.

When $X$ is a compact Riemannian manifold of dimension $n$ and Ricci curvature that satisfies ${\rm Ric}\geq K$, $K\leq 0$, Buser \cite{Buser} proved that also the following \emph{upper bound} for $\lambda_{1}$ in terms of $h(X)$ holds:
\begin{equation}\label{eq:BuserConst}
\lambda_1(X)\leq 2\sqrt{-(n-1)K}h(X)+10h(X)^2.
\end{equation}

Thanks to a result of Ledoux \cite{Ledoux}, we also know that the constants in Buser's inequality \eqref{eq:BuserConst} can be chosen to be dimension-independent. More precisely, Ledoux proved the following inequality for all smooth connected Riemannian manifolds of finite volume:
\begin{equation}\label{eq:LedouxConst}
\lambda_1(X)\leq \max\{6\sqrt{-K}h(X),36h(X)^2\}. 
\end{equation}

An improvement of Buser's inequality has been also noticed by Agol in the unpublished \cite{Ag}, where he refines the original proof of Buser and obtains better estimates of $\lambda_1$ in terms of $h(X)$ for $3$ dimensional manifolds, and later by Benson in \cite{Ben}.
\medskip

Recently, De Ponti and Mondino \cite{DeMo} sharpened the aforementioned theorems of Buser and Ledoux by improving the constants in both the Buser-type inequalities \eqref{eq:BuserConst}-\eqref{eq:LedouxConst} and by extending the results to (possibly non-smooth) $\mathsf{RCD}(K,\infty)$  spaces.  
\\ Recall that $\mathsf{RCD}(K,\infty)$ spaces are (possibly non-smooth) metric measure spaces having Ricci curvature bounded below by $K\in \R$ and no upper bound on the dimension, in a synthetic sense.  More precisely,  $\mathsf{RCD}(K,\infty)$ spaces  are the sub-class of $\mathsf{CD}(K,\infty)$ spaces introduced in the seminal works of Sturm \cite{St} and Lott-Villani \cite{LV} having the canonical energy functional  (called ``Cheeger energy") satisfying the parallelogram identity.  The reader is referred to Section \ref{sec:prel} for the precise definitions, and to \cite{AmbrosioICM}  for a survey. 
   The class of $\mathsf{RCD}(K,\infty)$ spaces was singled out by Ambrosio-Gigli-Savar\'e \cite{AGS1}  (see also \cite{AGMR}) who developed a powerful calculus in this setting. 
\\   The subclass of $\mathsf{RCD}(K,\infty)$ spaces having an upper bound on the dimension by $N\in [1,\infty)$ in a synthetic sense is denoted by $\mathsf{RCD}(K,N)$, see \cite{G, EKS, AMS, CaMi}.
\\  Remarkable examples of $\mathsf{RCD}(K,\infty)$ spaces are pmGH-limits of Riemannian manifolds with Ricci curvature bounded below (the so-called Ricci limits ) \cite{GMS}, finite dimensional Alexandrov spaces \cite{P}, weighted Riemannian manifolds with $\infty$-Bakry-\'Emery Ricci curvature bounded below by $K$ \cite{St}, stratified spaces \cite{BKMR}, (possibly singular) quotients of Riemannian manifolds with Ricci bounded below \cite{GGKMS}.
\medskip

In order to state the outcomes of \cite{DeMo}, we firstly set
\begin{equation}\label{eq:ExplJKIntro}
J_K(t):=\begin{cases}\sqrt{\frac{2}{\pi K}}\arctan\Big(\sqrt{e^{2Kt}-1}\Big)  \ \ &\textrm{if} \ \  K>0,\\
\frac{2}{\sqrt{\pi}}\sqrt{t} \ \ &\textrm{if} \ \ K=0,\\
\sqrt{-\frac{2}{\pi K}}\arctanh{\Big(\sqrt{1-e^{2Kt}}\Big)} \ \ &\textrm{if} \ \ K<0.\end{cases} \qquad t>0
\end{equation}

\begin{theorem}[Theorem 1.1 \cite{DeMo}]\label{thm:MainDeMo}
Let $(X,\mathsf{d},\mm)$ be an $\mathsf{RCD}(K,\infty)$ metric measure space for some $K\in \mathbb{R}$, with $\mm(X)<\infty$. Then
\begin{equation}\label{implicitlambda1Intro}
h(X)\geq \sup_{t>0}  \frac{1-e^{-\lambda_1t}}{J_K(t)}.
\end{equation}
\end{theorem}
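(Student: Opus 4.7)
My plan is to combine two ingredients: a semigroup-to-perimeter estimate of Ledoux type, which accounts for the geometry (the factor $J_K(t)$), and a Poincaré-type lower bound on $\|\chi_A-P_t\chi_A\|_{L^1}$ in terms of $\lambda_1$. Fix a Borel set $A$ with $0<\mm(A)\le \mm(X)/2$ and test the heat semigroup $(P_t)$ on $\chi_A$. On $\mathsf{RCD}(K,\infty)$ the Bakry--Émery pointwise gradient estimate $|\nabla P_s f|\le e^{-Ks}\,P_s(|\nabla f|)$ (and its $L^2$ refinement) is available by the Ambrosio--Gigli--Savaré calculus, and integrating it in $s\in(0,t)$ together with the identity $\frac{d}{ds}\int |P_s f-f|\,d\mm\le \int |\nabla P_s f|\,d\mm$ gives, after relaxing from Lipschitz data to BV data,
\begin{equation*}
\int_X|P_t\chi_A-\chi_A|\,d\mm \le J_K(t)\,\Per(A).
\end{equation*}
The explicit form of $J_K$ in \eqref{eq:ExplJKIntro} comes precisely from integrating the relevant Gaussian-type factor against $e^{-Ks}$.

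For the matching lower bound I would exploit the spectral gap. Write $a:=\mm(A)/\mm(X)$ and decompose $\chi_A=a+(\chi_A-a)$, so that $\chi_A-a$ has zero mean. Differentiating $s\mapsto\|P_s(\chi_A-a)\|_{L^2}^2$ and using the variational characterization of $\lambda_1$ as the sharp Poincaré constant yields the standard $L^2$ decay $\|P_s(\chi_A-a)\|_{L^2}^2\le e^{-2\lambda_1 s}\|\chi_A-a\|_{L^2}^2$, hence by self-adjointness and the semigroup law
\begin{equation*}
\int_X(\chi_A-a)\,P_t(\chi_A-a)\,d\mm=\|P_{t/2}(\chi_A-a)\|_{L^2}^2\le e^{-\lambda_1 t}\mm(A)(1-a).
\end{equation*}
Expanding the left-hand side and using mass conservation $\int_X P_t\chi_A\,d\mm=\mm(A)$ delivers
\begin{equation*}
\int_A(1-P_t\chi_A)\,d\mm\ge \mm(A)(1-a)(1-e^{-\lambda_1 t}).
\end{equation*}
Since $\int_X(\chi_A-P_t\chi_A)\,d\mm=0$, the $L^1$ difference equals exactly twice this quantity.

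Combining the two bounds yields $2\mm(A)(1-a)(1-e^{-\lambda_1 t})\le J_K(t)\,\Per(A)$, and since $a\le 1/2$ forces $2(1-a)\ge 1$ one gets $\Per(A)/\mm(A)\ge (1-e^{-\lambda_1 t})/J_K(t)$. Taking the infimum over admissible $A$ and the supremum over $t>0$ concludes the proof. The genuinely nontrivial part is the Ledoux-type estimate: one must produce the sharp constant $J_K(t)$ and simultaneously justify the passage from regular test functions to characteristic functions of Borel sets in the non-smooth $\mathsf{RCD}$ framework, using the identification of $\Per(A)$ with the relaxed total variation via the slope. The rest of the argument, although it uses the full strength of the Poincaré inequality encoded by $\lambda_1$, is soft.
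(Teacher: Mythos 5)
Your architecture is exactly the one the paper recalls from \cite{DeMo} in \eqref{in: last part DeMo}: the key estimate $\|\chi_A-H_t\chi_A\|_{L^1}\le J_K(t)\,\Per(A)$ followed by the spectral decay $\|H_{t/2}(\chi_A-a)\|_2^2\le e^{-\lambda_1 t}\|\chi_A-a\|_2^2$. Your second ingredient is correct and complete (including the identity $\|\chi_A-H_t\chi_A\|_1=2\int_A(1-H_t\chi_A)\,\di\mm$, which uses the maximum principle and mass conservation, and the final optimization using $a\le 1/2$).

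The gap is in your derivation of the first ingredient. The inequality $\frac{\di}{\di s}\int_X|P_sf-f|\,\di\mm\le\int_X|\nabla P_sf|\,\di\mm$ is false: already for the heat semigroup on $(\R,|\cdot|,\mathcal{L}^1)$ with $f$ a characteristic function one has $\|P_sf-f\|_1=\tfrac{2}{\sqrt\pi}\sqrt{s}\,\Per$, whose derivative blows up like $s^{-1/2}$, while $\int_X|\nabla P_sf|\,\di\mm\le\Per$ stays bounded. Worse, if it were true, combining it with the $1$-Bakry-\'Emery contraction $\| |\nabla P_s\chi_A|\|_1\le e^{-Ks}\Per(A)$ would give $\|\chi_A-P_t\chi_A\|_1\le\frac{1-e^{-Kt}}{K}\Per(A)$, which is $O(t)$ rather than $O(\sqrt t)$ as $t\to 0$; inserted into the rest of your argument and letting $t\to 0$, this would "prove" $h(X)\ge\lambda_1$, false already for the Gaussian where $h=\sqrt{2/\pi}<1=\lambda_1$. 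The correct mechanism is dual: for $\|g\|_\infty\le 1$ one writes
\begin{equation*}
\int_X g\,(\chi_A-H_t\chi_A)\,\di\mm=-\int_0^t\int_X \Delta H_sg\;\chi_A\,\di\mm\,\di s\le \Per(A)\int_0^t\big\|\,|\nabla H_sg|\,\big\|_\infty\,\di s\, ,
\end{equation*}
so the gradient estimate must be applied to the \emph{bounded test function} $g$, and the constant $J_K(t)$ arises as the primitive of the sharp $L^\infty$-to-Lipschitz regularization \eqref{strong Feller}, $\|\,|\nabla H_sg|\,\|_\infty\le\sqrt{2K/(\pi(e^{2Ks}-1))}\,\|g\|_\infty$ — not from the estimate $|\nabla H_sf|\le e^{-Ks}H_s(|\nabla f|)$ you invoke. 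Establishing \eqref{strong Feller} with that sharp constant (rather than the standard $\sqrt{K/(e^{2Ks}-1)}$) is the genuinely hard part of \cite{DeMo}, and without it one does not recover $J_K$ and hence not the sharp statement.
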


We refer again to \cite{DeMo} for a discussion on how to obtain more explicit bounds of $\lambda_1$ in terms of $h(X)$ starting from the inequality \eqref{implicitlambda1Intro} (improving the constants in both \eqref{eq:BuserConst}-\eqref{eq:LedouxConst}), and for an analogous result that can be applied to spaces with $\mm(X)=\infty$ (in this case $\lambda_1$ is replaced by $\lambda_0$). 

As noticed in  \cite{DeMo}, another important consequence of \autoref{thm:MainDeMo} is that the inequality is sharp in the case $K>0$, as equality is achieved in the Gaussian space. 
\medskip

A first goal of the present work is to show that the inequality \eqref{implicitlambda1Intro} is also rigid:
\begin{theorem}\label{thm:Main}
Let $(X,\mathsf{d},\mm)$ be an $\mathsf{RCD}(K,\infty)$ metric measure space with $K>0$. Let us suppose that  
\begin{equation}\label{eq: equality Buser}
h(X)=\sup_{t>0}  \frac{1-e^{-\lambda_1t}}{J_K(t)}.
\end{equation}
Then 
$$(X,\di,\mm)\cong (Y,\di_Y,\mm_Y)\times (\R,|\cdot|,\sqrt{K/(2\pi)}e^{-Kt^2/2}\di t)$$
for some $\mathsf{RCD}(K,\infty)$ space $(Y,\di_Y,\mm_Y)$, where $\cong$ denotes isomorphism as metric measure spaces.
\end{theorem}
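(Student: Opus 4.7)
The plan is to trace equality through the proof of \autoref{thm:MainDeMo} in \cite{DeMo}, extract from the equality case a first eigenfunction of the Laplacian with a Gaussian profile, and then conclude by a splitting theorem for $\mathsf{RCD}(K,\infty)$ spaces. Recall that \eqref{implicitlambda1Intro}, applied to a Borel set $A$ with $0<\mm(A)\leq\mm(X)/2$ and $t>0$, is obtained by combining: (i) the heat-semigroup $L^{1}$-BV estimate $\|P_{t}\chi_{A}-\chi_{A}\|_{L^{1}(\mm)}\leq J_{K}(t)\,\Per(A)$, itself derived by time-integration of the Bakry-\'Emery contraction $|\nabla P_{s}u|\leq e^{-Ks}P_{s}|\nabla u|$; with (ii) the spectral identity
\begin{equation*}
\|P_{t}\chi_{A}-\chi_{A}\|_{L^{1}(\mm)} \;=\; 2\sum_{k\geq 1}c_{k}^{2}\,(1-e^{-\lambda_{k}t}) \;\geq\; 2(1-e^{-\lambda_{1}t})\,\mm(A)\mm(A^{c}),
\end{equation*}
where $c_{k}=\langle\chi_{A},\phi_{k}\rangle_{L^{2}}$ are the spectral coefficients against an $L^{2}$-orthonormal eigenbasis of the Laplacian. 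Using $\mm(A^{c})\geq 1/2$ this yields $\Per(A)/\mm(A)\geq (1-e^{-\lambda_{1}t})/J_{K}(t)$.

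Starting from \eqref{eq: equality Buser}, I would select a diagonal sequence $(A_{n},t_{n})$ along which $\Per(A_{n})/\mm(A_{n})\to h(X)$ and $(1-e^{-\lambda_{1}t_{n}})/J_{K}(t_{n})\to h(X)$, so that both inequalities above saturate in the limit. This forces $\mm(A_{n})\to\mm(X)/2$ (saturation of $\mm(A)\mm(A^{c})\geq \mm(A)/2$) and forces the centered indicators $\chi_{A_{n}}-\mm(A_{n})$ to concentrate in $L^{2}$ on the first eigenspace (saturation of the spectral gap step). Exploiting the compact embedding $W^{1,2}\hookrightarrow L^{2}$ of \autoref{thm:compembedding}, after suitable normalization a subsequence converges to a non-trivial $\lambda_{1}$-eigenfunction $\phi$. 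In parallel, the asymptotic saturation of (i) yields equality in the Bakry-\'Emery gradient contraction along the heat flow of $\phi$; matching this identity against the explicit shape of $J_{K}$, which is tuned to the one-dimensional Gaussian, should force $\lambda_{1}=K$ together with the rigid features $|\nabla\phi|$ constant and $\phi_{\ast}\mm$ Gaussian.

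With such a Gaussian-type first eigenfunction in hand, the conclusion would follow from the rigidity of the Poincar\'e-Lichnerowicz inequality on $\mathsf{RCD}(K,\infty)$ spaces: existence of a first eigenfunction attaining $\lambda_{1}=K$ with the rigid features above is known to force the splitting
\begin{equation*}
(X,\di,\mm)\cong(Y,\di_{Y},\mm_{Y})\times\bigl(\R,|\cdot|,\sqrt{K/(2\pi)}\,e^{-Kt^{2}/2}\,\di t\bigr),
\end{equation*}
with $Y$ an $\mathsf{RCD}(K,\infty)$ space.

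The main obstacle is that the supremum in \eqref{eq: equality Buser} is typically attained only as $t\to\infty$, so one cannot select a single optimal time but must coordinate a diagonal sequence with $t_{n}\to\infty$. Extracting from the asymptotic saturation of an $L^{1}$-type inequality on indicator functions a pointwise rigid identity for a smooth limit eigenfunction is the heart of the argument; the precise analytic shape of $J_{K}$, calibrated exactly to the one-dimensional Gaussian, is essential to pinpoint both $\lambda_{1}=K$ and the correct normalization of $\phi$ needed to feed into the splitting theorem.
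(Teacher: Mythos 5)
There is a genuine gap at the heart of your plan. You propose to extract a nontrivial $\lambda_1$-eigenfunction from the saturation of the spectral-gap step $\|H_{t/2}(\chi_A-\mm(A))\|_2^2\le e^{-\lambda_1 t}\|\chi_A-\mm(A)\|_2^2$ along a diagonal sequence $(A_n,t_n)$. But, as you yourself note, the supremum in \eqref{eq: equality Buser} is typically attained only as $t\to\infty$, and in that regime \emph{both sides} of the spectral-gap inequality tend to $0$: its asymptotic saturation is automatic and carries no spectral information whatsoever, so no concentration on the first eigenspace and no eigenfunction can be extracted from it. In fact the paper's proof shows that the only case in which an eigenfunction \emph{is} produced --- namely when the supremum over $t$ is attained at some finite $\bar t$, forcing $\chi_E-\mm(E)$ to be a $\lambda_1$-eigenfunction --- leads to a \emph{contradiction} (a two-valued $W^{1,2}$ function has vanishing minimal weak upper gradient and the local Poincar\'e inequality then forces it to be locally constant), so that case is ruled out rather than exploited. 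Consequently your proposed endgame via spectral-gap (Lichnerowicz/Obata-type) rigidity, which would require establishing $\lambda_1=K$ together with rigid features of an eigenfunction, is not supported by the equality information actually available; the claim that matching against the shape of $J_K$ ``should force $\lambda_1=K$ and $|\nabla\phi|$ constant'' is unsubstantiated.

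The information that does survive in the limit $t\to\infty$ is purely isoperimetric, and this is the route the paper takes: the limiting form of the first inequality in \eqref{in: last part DeMo} is $\Per(A)\ge\sqrt{2/\pi}\,\cdot 2\mm(A)(1-\mm(A))$ (after scaling to $K=1$), whose equality case at $\mm(A)=1/2$ is exactly equality in the Gaussian isoperimetric inequality \eqref{eq:GII}; the splitting then follows from Han's rigidity theorem (Proposition \ref{prop: Han rigidity}), not from spectral-gap rigidity. A second ingredient your proposal lacks is the existence of an actual optimal Cheeger set $E$ with $0<\mm(E)\le 1/2$ (Proposition \ref{prop:existenceofCheegersets}, via BV compactness and the superlinear isoperimetric profile guaranteed by $K>0$): working only with a minimizing sequence $A_n$ would leave you with ``almost equality'' in the Gaussian isoperimetric inequality, and without a quantitative stability statement (which the paper does not invoke and you do not supply) you cannot pass to the rigid conclusion. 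So while your overall instinct --- trace equality back through the two inequalities of \cite{DeMo} --- is the right starting point, the identification of \emph{which} inequality carries the rigidity, and the compactness needed to turn asymptotic saturation into exact equality for a genuine set, are both missing.
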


Let us stress that the rigidity result of \autoref{thm:Main} is new even in the smooth setting of  (possibly weighted) Riemannian manifolds. 
\\ Using the compactness of the class of $\mathsf{RCD}(K,N)$ spaces with uniformly bounded diameter under measured Gromov-Hausdorff convergence \cite{St1, GMS},  the stability properties of $\lambda_{1}$ and $h(X)$ under such convergence \cite{GMS, AH17}, and the fact that no  $\mathsf{RCD}(K,N)$ space can split isomorphically a Gaussian (since the former is measure-doubling while the latter is not), we obtain the next dimensional improvement of \eqref{implicitlambda1Intro} by a straightforward argument by contradiction (notice that $K>0$ implies a uniform upper bound on the diameter thanks to the Bonnet-Myers theorem \cite{St1}).

\begin{corollary}[Dimensional improvement of Buser's inequality]\label{cor:improvedBuser}
For every $K>0$ and $N\in [1,\infty)$ there exists $\varepsilon=\varepsilon(K,N)$ with the following property. For every $\mathsf{RCD}(K,N)$ space $(X,\di, \mm)$, the following improved Buser's inequality holds:
\begin{equation}\label{eq:improvedBuser}
h(X)\geq \sup_{t>0}  \frac{1-e^{-\lambda_1t}}{J_K(t)} +\varepsilon.
\end{equation}
\end{corollary}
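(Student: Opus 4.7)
The plan is to argue by contradiction, exploiting precisely the ingredients listed just before the statement. Fix $K>0$ and $N\in[1,\infty)$, and assume the conclusion fails: then for every $n\in\mathbb{N}$ there exists an $\mathsf{RCD}(K,N)$ space $(X_n,\di_n,\mm_n)$ violating \eqref{eq:improvedBuser} with $\varepsilon=1/n$. Combined with \autoref{thm:MainDeMo} (applicable since $\mathsf{RCD}(K,N)\subset \mathsf{RCD}(K,\infty)$), this forces
$$\lim_{n\to\infty}\bigg(\sup_{t>0}\frac{1-e^{-\lambda_1(X_n)t}}{J_K(t)}-h(X_n)\bigg)=0.$$

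Since $K>0$ and $N<\infty$, Bonnet-Myers on $\mathsf{RCD}(K,N)$ spaces provides a uniform diameter bound. After normalising each $\mm_n$ to a probability measure (which affects neither $h(X_n)$ nor $\lambda_1(X_n)$), precompactness of the class of $\mathsf{RCD}(K,N)$ spaces with uniformly bounded diameter under measured Gromov-Hausdorff convergence \cite{St1, GMS} yields, along a not relabelled subsequence, mGH convergence to a limit $\mathsf{RCD}(K,N)$ space $(X_\infty,\di_\infty,\mm_\infty)$. Stability of $\lambda_1$ and of $h$ along such convergence, as established in \cite{GMS, AH17}, gives $\lambda_1(X_n)\to \lambda_1(X_\infty)$ and $h(X_n)\to h(X_\infty)$. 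Since the map $\lambda\mapsto \sup_{t>0}(1-e^{-\lambda t})/J_K(t)$ is continuous on $(0,\infty)$ (it is monotone and, for each fixed $t>0$, Lipschitz in $\lambda$ with constant $t/J_K(t)$, uniformly bounded in $t$), passing to the limit produces
$$h(X_\infty) = \sup_{t>0}\frac{1-e^{-\lambda_1(X_\infty)t}}{J_K(t)},$$
so $X_\infty$ saturates the Buser-type inequality \eqref{implicitlambda1Intro}.

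The rigidity result \autoref{thm:Main} then forces an isomorphic splitting
$$(X_\infty,\di_\infty,\mm_\infty)\cong (Y,\di_Y,\mm_Y)\times \Big(\R,|\cdot|,\sqrt{K/(2\pi)}\,e^{-Kt^2/2}\di t\Big)$$
for some $\mathsf{RCD}(K,\infty)$ space $Y$. This is incompatible with $X_\infty$ being $\mathsf{RCD}(K,N)$ for a finite $N$: indeed, by the Bishop-Gromov inequality every $\mathsf{RCD}(K,N)$ space with $N<\infty$ is locally uniformly measure-doubling, whereas the super-exponential decay of the Gaussian density at infinity prevents any space containing a Gaussian factor from being locally doubling. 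The resulting contradiction completes the proof.

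The main subtlety is the stability step, specifically the continuity of the Cheeger constant $h$ along mGH convergence: the bound $h(X_\infty)\leq \liminf_n h(X_n)$ is elementary via a diagonal approximation of test sets, whereas the reverse $\limsup_n h(X_n)\leq h(X_\infty)$ is more delicate and relies on the good perimeter theory and cut-off functions available in $\mathsf{RCD}(K,N)$ spaces; this is precisely the content of the stability results in \cite{AH17}. All remaining ingredients — Bonnet-Myers, mGH precompactness, continuity of $\lambda_1$, failure of doubling for the Gaussian factor — are standard or appear in references already cited in the excerpt.
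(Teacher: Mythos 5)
Your proposal is correct and follows essentially the same route as the paper, which only sketches this argument in the introduction: contradiction via a sequence violating the inequality with $\varepsilon=1/n$, Bonnet--Myers for the uniform diameter bound, mGH precompactness of $\mathsf{RCD}(K,N)$ spaces, stability of $\lambda_1$ and $h$ under mGH convergence, \autoref{thm:Main} to force a Gaussian splitting of the limit, and the doubling/non-doubling dichotomy to conclude. Your fleshing out of the continuity of $\lambda\mapsto\sup_{t>0}(1-e^{-\lambda t})/J_K(t)$ and of the two halves of the stability of $h$ is consistent with what the cited references \cite{GMS, AH17} provide.
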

Let us stress that Corollary \ref{cor:improvedBuser} is new even for smooth Riemannian manifolds with dimension $\leq N$ and Ricci curvature $\geq K>0$.
 \\
 
A second goal of the paper is to study the equality case in Cheeger's inequality \eqref{eq:ChIn}. In a series of now classical papers \cite{Buser78, Buser79,Buser}, Buser proved that equality in Cheeger's inequality is never attained for compact Riemannian manifolds and gave compact examples where the equality is almost attained (up to an error $\varepsilon>0$ arbitrarily small), showing the sharpness of \eqref{eq:ChIn} among smooth manifolds. Since in Buser's examples the diameters of the spaces grow as the error $\varepsilon>0$ decreases, it is natural to ask if Cheeger's inequality can be improved once an upper bound on the diameter is assumed.
Indeed, improvements of Cheeger's inequality when the Ricci curvature  lower  bound is coupled with upper bounds on the dimension and on the diameter have been considered for instance by Gallot in \cite{Ga} (see in particular Section 6) and by Bayle in \cite{Ba} (see equation (2.49) at page 85 and Remark 2.5.3 therein). The improvements are based on the observation that, under these assumptions, the isoperimetric profile has a better than linear behaviour. This can be turned into a better lower bound for the first eigenvalue of the Laplacian with a very general argument (see for instance \cite[Section 6]{Grig1}).\\
Linked to this question, it is also natural to ask if the equality in  \eqref{eq:ChIn} can be attained either in the  \emph{non-compact}  or in  the \emph{non-smooth compact} setting. We prove that the answer is positive for the former and is negative for the latter,  even under more general assumptions. More precisely, the second main result of the paper is the following:

\begin{theorem}\label{thm:rigcheegfinite}
Let $(X,\di,\mm)$ be an $\mathsf{RCD}(K,\infty)$ metric measure space with $\mm(X)<\infty$ and $K\in\R$. Assume that $(X, \di,\mm)$ admits a superlinear isoperimetric profile (this is always satisfied if ${\rm diam}(X)<\infty$ or $K>0$).

Then the equality in Cheeger's inequality is never attained, i.e.
\begin{equation}\label{eq:ChIneStrict}
\lambda_1>\frac{1}{4}h(X)^2.
\end{equation}
\end{theorem}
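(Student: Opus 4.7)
The plan is to revisit Cheeger's original coarea argument in the $\mathsf{RCD}(K,\infty)$ setting and carefully track its equality case, then use the superlinear isoperimetric profile to derive a contradiction. First, I would appeal to \autoref{thm:compembedding} to secure an $L^{2}$-normalized minimizer $u\in W^{1,2}(X)$ of the Rayleigh quotient defining $\lambda_{1}$, with $\int_{X} u\,\di\mm=0$; compactness of the embedding $W^{1,2}(X)\hookrightarrow L^{2}(X,\mm)$, which the superlinear profile guarantees via an improvement of Cheeger's inequality, ensures attainment. I would then pick a median $c\in\R$ of $u$ so that both $\mm(\{u>c\})$ and $\mm(\{u<c\})$ are bounded above by $\mm(X)/2$, and introduce $v:=(u-c)^{+}$ and $w:=(c-u)^{+}$.

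The core of the proof is Cheeger's coarea argument on $v^{2}$ (and symmetrically on $w^{2}$). Since each set $\{v^{2}>t\}\subseteq\{u>c\}$ has measure at most $\mm(X)/2$, combining the coarea formula with the definition of $h(X)$ yields
\begin{equation*}
\int_{X}|\nabla v^{2}|\,\di\mm=\int_{0}^{\infty}\Per(\{v^{2}>t\})\,dt\geq h(X)\int_{X}v^{2}\,\di\mm.
\end{equation*}
Applying the chain rule $|\nabla v^{2}|=2v|\nabla v|$ together with Cauchy--Schwarz gives
\begin{equation*}
h(X)^{2}\int_{X}v^{2}\,\di\mm\leq 4\int_{\{u>c\}}|\nabla u|^{2}\,\di\mm,
\end{equation*}
and summing with the analogous bound for $w$ produces $h(X)^{2}\int_{X}(u-c)^{2}\,\di\mm\leq 4\lambda_{1}\int_{X}u^{2}\,\di\mm$. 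Because $\int_{X}(u-c)^{2}\,\di\mm=\int_{X}u^{2}\,\di\mm+c^{2}\mm(X)\geq\int_{X}u^{2}\,\di\mm$, the hypothetical equality $\lambda_{1}=h(X)^{2}/4$ would force equality throughout this chain.

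Reading off those equalities, one extracts two consequences: (i) $c=0$, whence $v=u^{+}\not\equiv 0$ because $\int_{X}u\,\di\mm=0$ and $u\not\equiv 0$; and (ii) $\Per(\{v>s\})=h(X)\mm(\{v>s\})$ for almost every $s\in(0,\supess v)$. Taking a sequence $s_{n}\nearrow\supess v$ along such levels, the volumes $V_{n}:=\mm(\{v>s_{n}\})$ tend to $0$ while the ratios $\Per(\{v>s_{n}\})/V_{n}$ remain identically $h(X)$. This contradicts the superlinear behavior $I(V)/V\to\infty$ as $V\to 0^{+}$, where $I$ denotes the isoperimetric profile, concluding $\lambda_{1}>h(X)^{2}/4$.

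The main obstacle is the rigidity step that promotes the single scalar equality $\lambda_{1}=h(X)^{2}/4$ to pointwise equalities in the coarea/isoperimetric chain on a full range of small levels. Auxiliary technicalities---validity of the coarea formula and of the chain rule for $v^{2}$ in the $\mathsf{RCD}$ framework, identification of the minimal weak upper gradient with the relaxation of $\mathrm{lip}$, and a truncation argument to accommodate potentially unbounded eigenfunctions---are standard and already used in \cite{DeMo}; the genuinely new input is the observation that superlinearity of the isoperimetric profile provides exactly the quantitative margin needed to rule out Cheeger saturation on a vanishing family of superlevel sets.
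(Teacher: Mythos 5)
Your route is genuinely different from the paper's: the paper deduces this theorem from the strict monotonicity of $p\mapsto p\left(\lambda_{1,p}\right)^{1/p}$ (Theorem \ref{thm:strictNeum} and Corollary \ref{cor:opteqCheeg}), where the hypothetical equality forces equality in a H\"older inequality, hence $|\nabla f|\le C|f|$ $\mm$-a.e.\ for a minimizer $f$, and Lemma \ref{lemma:techngrad} (a Gronwall argument along a Wasserstein-geodesic test plan) shows such an $f$ cannot change sign. You instead track the equality case of the classical coarea proof of Cheeger's inequality and feed the superlinear isoperimetric profile directly into the analysis of small superlevel sets. The chain of inequalities you set up is correct, and the extraction of $c=0$ and of $\Per(\{v>s\})=h(X)\,\mm(\{v>s\})$ for a.e.\ $s\in(0,\supess v)$ from the saturation of that chain is sound.

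The gap is the assertion that $V_n=\mm(\{v>s_n\})\to 0$ as $s_n\nearrow M:=\supess v$. When $M<\infty$, what is actually true is that $V_n\to\mm(\{v\ge M\})=\mm(\{v=M\})$, and you give no reason why $v=u^{+}$ cannot have a plateau of positive measure at its essential supremum. In the smooth setting one would rule this out by unique continuation or the strong maximum principle for eigenfunctions, but no such tool is available on $\mathsf{RCD}(K,\infty)$ spaces, so this cannot be waved away: if $\mm(\{v=M\})>0$, your levels $s_n$ never produce superlevel sets of small positive measure, and the contradiction with the superlinear profile evaporates. The gap is fixable from ingredients your own chain already provides: saturation also forces equality in the Cauchy--Schwarz step, i.e.\ $|\nabla v|=\alpha v$ $\mm$-a.e.\ for some constant $\alpha\ge 0$; by locality of the minimal weak upper gradient one has $|\nabla v|=0$ $\mm$-a.e.\ on $\{v=M\}$, so a plateau with $M>0$ forces $\alpha=0$, hence $|\nabla v|\equiv 0$ and $v$ is constant, contradicting $\int_X u\,\di\mm=0$ with $u\not\equiv 0$. (Equivalently, $|\nabla u|\le C|u|$ $\mm$-a.e.\ together with Lemma \ref{lemma:techngrad} concludes at once, which is essentially the paper's mechanism.) As written, however, the final step does not go through.
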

We refer to the preliminaries given below for the Definition \ref{def:IsopProf} of superlinear isoperimetric profile, and to \autoref{thm:rigcheeginfinite} for the case $\mm(X)=\infty$.

Along the same lines of the arguments for Corollary \ref{cor:improvedBuser}, one can obtain the next improvement of Cheeger's inequality:

\begin{corollary}[Improved Cheeger's inequality]\label{cor:improvedCheeger}
For every $K\in \R$, $N\in [1,\infty)$ and $D\in (0,\infty)$ there exists $\varepsilon=\varepsilon(K,N,D)>0$ with the following property. For every $\mathsf{RCD}(K,N)$ space $(X,\di, \mm)$ with ${\rm diam}(X)\leq D$, the following improved Cheeger's inequality holds:
\begin{equation}\label{eq:improvedCheeger}
\lambda_1\geq \frac{1}{4}h(X)^2 +\varepsilon.
\end{equation}
\end{corollary}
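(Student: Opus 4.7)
The plan is to argue by contradiction, along the same lines of the sketch provided for Corollary \ref{cor:improvedBuser}. Assume the conclusion fails: then for some $K\in\R$, $N\in[1,\infty)$, $D\in(0,\infty)$ there is a sequence $\{(X_n,\di_n,\mm_n)\}_{n\in\mathbb{N}}$ of $\mathsf{RCD}(K,N)$ spaces with $\mathrm{diam}(X_n)\leq D$ satisfying
\[
\lambda_1(X_n) - \tfrac{1}{4}h(X_n)^2 \to 0 \qquad \text{as } n\to\infty.
\]
Since both $\lambda_1$ and $h$ are invariant under multiplicative rescaling of the reference measure (numerator and denominator in their variational definitions scale by the same factor, and the zero-average constraint in \eqref{eq:defla1Intro} is preserved), we may normalize so that $\mm_n(X_n)=1$ for every $n$. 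A simple test function argument, together with Cheeger's inequality \eqref{eq:ChIn}, shows that both sequences $\{\lambda_1(X_n)\}$ and $\{h(X_n)\}$ are bounded above by a constant depending only on $K,N,D$.

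Next, we invoke the pmGH-compactness of the class of normalized $\mathsf{RCD}(K,N)$ spaces with uniformly bounded diameter \cite{St1, GMS}: up to extracting a subsequence, we obtain a pmGH-limit $(X_\infty,\di_\infty,\mm_\infty)$ which is itself $\mathsf{RCD}(K,N)$, with $\mathrm{diam}(X_\infty)\leq D$ and $\mm_\infty(X_\infty)=1$. By the pmGH-stability of the first eigenvalue $\lambda_1$ and of the Cheeger constant $h$ in this setting \cite{GMS, AH17}, we can pass to the limit and obtain
\[
\lambda_1(X_\infty) = \tfrac{1}{4}h(X_\infty)^2.
\]

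To conclude, we apply Theorem \ref{thm:rigcheegfinite} to the limit space $(X_\infty,\di_\infty,\mm_\infty)$: being $\mathsf{RCD}(K,\infty)$ with finite total measure and $\mathrm{diam}(X_\infty)\leq D<\infty$, it admits a superlinear isoperimetric profile, and hence the strict inequality $\lambda_1(X_\infty) > \tfrac{1}{4}h(X_\infty)^2$ holds, contradicting the equality derived above. The main technical inputs, beyond Theorem \ref{thm:rigcheegfinite} itself, are the pmGH-stability statements for $\lambda_1$ and $h$ on $\mathsf{RCD}(K,N)$ spaces with uniformly bounded diameter; once these are granted, the contradiction argument is essentially bookkeeping. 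A minor auxiliary point is to rule out the degenerate case in which $X_\infty$ collapses to a point, but this is already excluded by the uniform upper bound on $\{\lambda_1(X_n)\}$ and $\{h(X_n)\}$ noted above.
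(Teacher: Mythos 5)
Your overall strategy coincides with the paper's: Corollary \ref{cor:improvedCheeger} is obtained there by the same contradiction--compactness--stability scheme sketched for Corollary \ref{cor:improvedBuser}, with Theorem \ref{thm:rigcheegfinite} providing the contradiction in the limit space. There is, however, one step that fails, and it sits exactly where you declare the issue ``minor'': the exclusion of collapse to a point. Your claim that $\{\lambda_1(X_n)\}$ and $\{h(X_n)\}$ are bounded above by a constant depending only on $K,N,D$ is false. The hypotheses give only an \emph{upper} bound on the diameter; on a sequence whose diameters shrink to $0$ (e.g.\ round spheres $\mathbb{S}^N(r)$ with $r\to 0$, or any fixed space with the distance scaled down) one has $\lambda_1\to\infty$ and $h\to\infty$, and the two-bump test-function argument bounding $\lambda_1$ from above requires a \emph{lower} diameter bound (Cheng's comparison blows up as ${\rm diam}\to 0$). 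Note also that, unlike in Corollary \ref{cor:improvedBuser} --- where the deficit $h-\sup_t(1-e^{-\lambda_1 t})/J_K(t)\to 0$ automatically bounds $h$ (the supremum never exceeds $\sqrt{2K/\pi}$) and hence $\lambda_1$ --- the hypothesis $\lambda_1(X_n)-\tfrac14 h(X_n)^2\to 0$ gives no a priori bound by itself: Buser's inequality only yields $h\geq c\sqrt{\lambda_1}$ for large $\lambda_1$, which is perfectly compatible with $h/(2\sqrt{\lambda_1})\to 1$ while $\lambda_1\to\infty$. So your argument does not rule out ${\rm diam}(X_n)\to 0$, in which case $X_\infty$ is a point, the stability statements for $\lambda_1$ and $h$ are vacuous, and Theorem \ref{thm:rigcheegfinite} cannot be invoked.

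The gap is fixable, but by a different device: rescale the distance rather than the measure. If (along a subsequence) $\delta_n:={\rm diam}(X_n)\to 0$, replace $(X_n,\di_n,\mm_n)$ by $(X_n,\di_n/\delta_n,\mm_n)$, which is $\mathsf{RCD}(K\delta_n^2,N)\subset\mathsf{RCD}(K\wedge 0,N)$ for $\delta_n\le 1$, has diameter exactly $1$, and has Cheeger deficit $\delta_n^2\bigl(\lambda_1(X_n)-\tfrac14 h(X_n)^2\bigr)\to 0$, since $\lambda_1$ and $h^2$ both scale by the factor $\delta_n^2$. The mGH limit of the rescaled sequence has diameter $1$, hence is not a point; boundedness of $\lambda_1$ and $h$ and their convergence then follow from stability, and the rest of your argument runs verbatim with $K$ replaced by $K\wedge 0$ (Theorem \ref{thm:rigcheegfinite} applies to any $\mathsf{RCD}(K',\infty)$ space with finite diameter). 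In the complementary case where the diameters stay bounded away from zero, your argument as written is correct.
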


It is a classical fact that Cheeger's inequality fits into a family of inequalities relating eigenvalues of the $p$-Laplacian associated to different exponents $1\le p<\infty$ (see for instance \cite[Theorem 3.2]{Li} for the case of Euclidean domains and Dirichlet boundary conditions or \cite[Proposition 2.5]{Mi1} for general metric measure spaces). In this paper we show that these inequalities are strict for a large class of $\mathsf{RCD}$ metric measure spaces.

\begin{theorem}
Let $(X,\di,\mm)$ be an $\mathsf{RCD}(K,\infty)$ metric measure space with $\mm(X)=1$ and superlinear isoperimetric profile. Then the function
\begin{equation*}
[1,\infty)\ni p\mapsto p\left(\lambda_{1,p}(X)\right)^{\frac{1}{p}}
\end{equation*}
is strictly increasing, where $\lambda_{1,1}(X)=h(X)$ is the Cheeger constant.
\end{theorem}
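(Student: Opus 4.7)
The plan is as follows. The non-strict monotonicity $p\lambda_{1,p}^{1/p} \leq q\lambda_{1,q}^{1/q}$ for $1 \leq p < q < \infty$ is classical (see \cite{Li, Mi1}), so the task reduces to ruling out equality. The strategy I would follow is to combine the standard test-function construction underlying the non-strict inequality with a rigidity analysis in the equality case: equality in H\"older's inequality forces the extremal $q$-eigenfunction to satisfy an eikonal-type identity $\mathrm{lip}(u) = c|u|$, which, via the Sobolev-to-Lipschitz property of $\mathsf{RCD}$ spaces, is incompatible with the continuity and the sign-change of a first eigenfunction. The case $p = 1$ is a direct consequence of a $p$-version of \autoref{thm:rigcheegfinite}, so below I focus on $1 < p < q < \infty$.

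\emph{Setup and non-strict inequality.} Under the superlinear isoperimetric profile assumption, \autoref{thm:compembedding} yields the compact Sobolev embedding, so for each $p > 1$ the infimum $\lambda_{1,p}$ is attained by some $u_p \in W^{1,p}(X)$ with $\int |u_p|^{p-2}u_p\,\di\mm = 0$; a standard nodal-domain decomposition gives $\int \mathrm{lip}(u_{p,\pm})^p \, \di\mm = \lambda_{1,p}\int u_{p,\pm}^p \, \di\mm$ for the positive and negative parts $u_{p,\pm}$. For $1 < p < q < \infty$, set $u := u_q$ and construct the test function
\[
v := A\, u_+^{q/p} - B\, u_-^{q/p}, \qquad A^{p-1}\int u_+^{q(p-1)/p}\,\di\mm = B^{p-1}\int u_-^{q(p-1)/p}\,\di\mm,
\]
so that $\int |v|^{p-2} v\,\di\mm = 0$, i.e.\ $v$ is admissible for $\lambda_{1,p}$. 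Applying H\"older's inequality with conjugate exponents $q/(q-p)$ and $q/p$ to each $\int u_\pm^{q-p}\,\mathrm{lip}(u_\pm)^p\,\di\mm$ and inserting the eigenfunction identity $\int \mathrm{lip}(u_\pm)^q\,\di\mm = \lambda_{1,q}\int u_\pm^q\,\di\mm$ then yields $\int \mathrm{lip}(v)^p\,\di\mm \leq (q/p)^p\,\lambda_{1,q}^{p/q}\int|v|^p\,\di\mm$, hence $p\lambda_{1,p}^{1/p} \leq q\lambda_{1,q}^{1/q}$.

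\emph{Rigidity and contradiction.} Suppose equality holds for some $1 < p_0 < q_0 < \infty$. Then $v$ constructed above is a first $p_0$-eigenfunction, and both H\"older inequalities must be equalities; this forces $\mathrm{lip}(u_\pm)^{q_0}$ to be proportional to $u_\pm^{q_0}$ on $\{u_\pm > 0\}$, and the eigenfunction identity identifies the proportionality constant, giving $\mathrm{lip}(u_+) = c\, u_+$ $\mm$-a.e.\ on $\{u_+ > 0\}$ with $c := \lambda_{1,q_0}^{1/q_0}$. For every $\varepsilon > 0$, the function $\log(u_+ + \varepsilon)$ belongs to $W^{1,\infty}(X)$ and satisfies $|\nabla \log(u_+ + \varepsilon)| = \mathrm{lip}(u_+)/(u_+ + \varepsilon) \leq c$ $\mm$-a.e.; the Sobolev-to-Lipschitz property of $\mathsf{RCD}$ spaces therefore gives that its continuous representative is $c$-Lipschitz, and letting $\varepsilon \to 0^+$ shows that $\log u_+$ is $c$-Lipschitz on $\{u_+ > 0\}$. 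Since $u$ changes sign and $X$ is connected, $\partial\{u_+ > 0\} \neq \emptyset$: picking $x_0 \in \{u_+ > 0\}$ and a sequence $y_n \to y_\partial \in \partial\{u_+ > 0\}$, continuity of $u_+$ gives $\log u_+(y_n) \to -\infty$, whereas $|\log u_+(x_0) - \log u_+(y_n)| \leq c\, d(x_0, y_n) \leq c\,(d(x_0, y_\partial) + 1)$ is uniformly bounded for large $n$ --- a contradiction. The most delicate step is the Sobolev-to-Lipschitz argument on the open subset $\{u_+ > 0\}$, handled via the $\varepsilon$-approximation above combined with the standard Sobolev-to-Lipschitz theorem for $\mathsf{RCD}$ spaces.
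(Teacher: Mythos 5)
Your overall architecture (non-strict monotonicity via a power-type test function; equality in H\"older forcing an eikonal identity $|\nabla u|=c|u|$; a unique-continuation argument showing such a function cannot change sign) matches the paper's, which proves the statement as Corollary \ref{cor:opteqCheeg} via Proposition \ref{prop:mononeumann} and Theorem \ref{thm:strictNeum}. But your implementation of the first half has a genuine gap. The paper builds its competitor from the \emph{whole} eigenfunction, via the curve $\gamma_q(t)=(f+t)/\|f+t\|_q$ and $\gamma(t)=|\gamma_q(t)|^{(q-p)/p}\gamma_q(t)$, choosing $\tilde t$ so that the $p$-mean constraint holds; the only inputs are H\"older and the elementary fact that $t\mapsto\|f+t\|_q$ is minimized at $t=0$. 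Your competitor $v=A\,u_+^{q/p}-B\,u_-^{q/p}$ instead requires the separate Rayleigh-quotient identities $\int|\nabla u_\pm|^q\,\di\mm=\lambda_{1,q}\int u_\pm^q\,\di\mm$, which you assert as a ``standard nodal-domain decomposition''. In $\R^n$ this follows by testing the weak $q$-Laplace equation with $u_\pm$; here the eigenfunction is defined only as a constrained minimizer, so one must first derive the Euler--Lagrange identity $\int|\nabla u|^{q-2}\langle\nabla u,\nabla\phi\rangle\,\di\mm=\lambda_{1,q}\int|u|^{q-2}u\,\phi\,\di\mm$, which needs infinitesimal Hilbertianity to make sense of the pairing and a separate argument to handle the nonlinear constraint $\int|f|^{q-2}f\,\di\mm=0$ (not preserved under perturbation). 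The identity is not cosmetic: without it your H\"older step only yields the bound $(q/p)^p\max(\mu_+,\mu_-)^{p/q}$, where $\mu_\pm$ are the individual Rayleigh quotients of $u_\pm$, and since $\lambda_{1,q}$ is merely a convex combination of $\mu_+$ and $\mu_-$ this can be strictly worse than $(q/p)^p\lambda_{1,q}^{p/q}$. This step must be supplied; avoiding it is precisely why the paper does not use the nodal decomposition.

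The rigidity endgame is where you genuinely diverge, and your alternative is sound modulo technicalities. The paper isolates the key statement as Lemma \ref{lemma:techngrad} ($|\nabla f|\le C|f|$ forces constant sign) and proves it by a Gronwall argument along the curves of the optimal geodesic test plan between the normalized restrictions of $\mm$ to $\{f>0\}$ and $\{f\le 0\}$. Your route --- $|\nabla\log(u_++\varepsilon)|\le c$, Sobolev-to-Lipschitz, then $\varepsilon\downarrow 0$ --- reaches the same conclusion without optimal transport, which is a real simplification if the details are secured. Two points need care: first, you should verify that $\log(u_++\varepsilon)$ lies in the class to which Sobolev-to-Lipschitz applies (it has bounded weak gradient, but an integrability or boundedness statement for $u_+$ is needed, or a local version of the property must be invoked); second, the cleaner conclusion is obtained by fixing $y$ with $u_+(y)=0$ (such $y$ exists since the continuous representative of $u_+$ vanishes on a set of positive measure and $\mathrm{supp}\,\mm=X$) and letting $\varepsilon\to 0$ in $\log(u_+(x)+\varepsilon)\le\log\varepsilon+c\,\di(x,y)$, which forces $u_+\equiv 0$ outright, rather than arguing about $\partial\{u_+>0\}$.
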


In the last part of the paper, we provide several examples of spaces with Ricci curvature bounded below where the equality in Cheeger's inequality is attained.
\\

We conclude the introduction by mentioning that rigidity results involving the spectrum of $\mathsf{RCD}$ spaces received a lot of attention in the recent literature, a non-exhaustive list follows: Ketterer \cite{Ket} extended the validity of Obata's rigidity theorem to the non-smooth setting, Cavalletti-Mondino \cite{CaMoGT} proved rigidity results involving the first eigenvalue of the $p$-Laplace operator with Neumann boundary conditions and Mondino-Semola \cite{MoSe} for Dirichlet boundary conditions, Gigli-Ketterer-Kuwada-Ohta \cite{GKKO} established the rigidity in the $\mathsf{RCD}(K,\infty)$ spectral gap, Ambrosio-Bru\'e-Semola \cite{ABS19} proved rigidity in the 1-Bakry-\'Emery inequality of $\mathsf{RCD}(0,N)$ spaces, later extended to $\mathsf{RCD}(K,\infty)$ spaces with $K>0$ by Han \cite{Han}. 

As a final remark, let us also point out the following general principle (clearly presented in the Introduction of \cite{Mi}) which lies behind several of the aforementioned results: the hierarchy between $p$-spectral gaps associated to different exponents $p$ is independent of any curvature assumption in one direction (Cheeger's inequality), while it heavily relies on lower curvature bounds in the other one (Buser's inequality).

\section*{Acknowledgements}
A.M. is supported by the European Research Council (ERC), under the European's Union Horizon 2020 research and innovation programme, via the ERC Starting Grant  “CURVATURE”, grant agreement No. 802689.

D.S. wishes to thank Marco Barberis for a useful conversation about Example \ref{examplethrice}.

The authors would like to thank the anonymous referees for their suggestions that helped to improve a previous version of the paper.

\section{Preliminaries}\label{sec:prel}

\subsection{Curvature bounds and heat flow}\label{subsec:RCD}
Unless otherwise stated, we assume $(X,\di)$ is a complete and separable metric space endowed with a  $\sigma$-finite, non-negative reference measure $\mm$ over the Borel $\sigma$-algebra $\mathcal{B}$. We also assume $\textsf{supp}(\mm)=X$ and the existence of $x_0\in X$, $M>0$ and $c\geq 0$ such that
$$\mm(B_r(x_0))\leq M\exp(cr^2) \ \ \textrm{for\ every} \ r\geq 0\, .$$
Possibly enlarging $\mathcal{B}$ and extending the measure $\mm$, we can assume that $\mathcal{B}$ is $\mm$-complete without loss of generality. 
We call $(X,\mathsf{d},\mm)$ a metric measure space, m.m.s for short.

\vspace{0.3cm}

We denote by $(\mathcal{P}_2(X), W_2)$ the space of probability measures on $X$ with finite second moment endowed with the quadratic Kantorovich-Wasserstein distance $W_{2}$. 

The \textit{relative entropy functional} $\mathsf{Ent}_{\mm}:\mathcal{P}_2(X)\rightarrow \R\cup \{+\infty\}$ is defined as
\begin{equation}
\mathsf{Ent}_{\mm}(\mu):=\begin{cases} \int \rho\log \rho \, \di\mm \ &\textrm{if} \ \mu=\rho\mm\, \ \textrm{and \ $(\rho \log \rho)^{-}\in L^{1}(X,\mm)$} , \\ +\infty \ &\textrm{otherwise}\, .\end{cases}
\end{equation} 
In the sequel we use the notation:
$$
D(\mathsf{Ent}_{\mm}):=\{\mu\in \mathcal{P}_2(X) \,:\, \mathsf{Ent}_{\mm}(\mu)\in \R\}.
$$

\begin{definition}[$\mathsf{CD(K,\infty)}$ condition]
Given $K\in \R$, a m.m.s. $(X,\mathsf{d},\mm)$ verifies the $\mathsf{CD}(K,\infty)$ condition if for any $\mu^0,\mu^1\in D(\mathsf{Ent}_{\mm})$ there exists a $W_2$-geodesic $(\mu_t)$ connecting $\mu^0$ and $\mu^1$ and such that, for any $t\in[0,1]$,
\begin{equation}
\mathsf{Ent}_{\mm}(\mu_t)\leq (1-t)\mathsf{Ent}_{\mm}(\mu_0)+t\mathsf{Ent}_{\mm}(\mu_1)-\frac{K}{2}t(1-t)W_2^2(\mu_0,\mu_1).
\end{equation}
\end{definition}

This class of spaces was introduced independently by Sturm \cite{St} and Lott-Villani \cite{LV}.  

If $(X,\di,\mm)$ satisfies $\mathsf{CD}(K,\infty)$ for $K\in \R$, then, for every   $\alpha,\beta>0$,  the metric measure space $(X,\alpha\di,\beta\mm)$ satisfies the $\mathsf{CD}(K/\alpha^2,\infty)$ condition. In particular, it is not restrictive to assume that a $\mathsf{CD}(K,\infty)$ m.m.s. with $\mm(X)<\infty$ is a probability space. Moreover, $K>0$ implies $\mm(X)<\infty$. 

\vspace{0.3cm}

For $1<p<\infty$, the \textit{$p$-Cheeger energy} is defined as
\begin{equation}\label{eq:defChm}
\mathsf{Ch}_{\mm,p}(f):=\inf\bigg\{\liminf_{n\to \infty}\frac{1}{p}\int_X \mathrm{lip}(f_n)^p\di\mm: f_n\in \mathsf{Lip}_{bs}(X), f_n\rightarrow f \ \mathrm{in} \ L^p(X,\mm)\bigg\}.
\end{equation}
\\As proved in \cite{AGS}, $\mathsf{Ch}_{\mm,p}(f)$ can be represented in terms of the so called \textit{minimal weak upper gradient} $|\nabla f|$ as
$$
\mathsf{Ch}_{\mm,p}(f)=  \frac{1}{p}\int_X |\nabla f|^p\, \di\mm\, .
$$

The $p$-Cheeger energy is a $p$-homogeneous, lower semicontinuous and convex functional on $L^p(X,\mm)$ whose proper domain
$$
W^{1,p}(X,\di,\mm):=\{f\in L^{p}(X,\mm)\,:\, \mathsf{Ch}_{\mm,p}(f)<\infty\}
$$
is a dense linear subspace of $L^p(X,\mm)$. The space $W^{1,p}(X,\di,\mm)$ is Banach when endowed with the norm
$$\| f \|_{W^{1,p}}^p:=\| f\|^p_{L^p}+ p\mathsf{Ch}_{\mm,p}(f).$$
Let us also point out that in \cite{GH} the authors proved that the minimal weak upper gradient of a function $f\in W^{1,p}(X,\di,\mm)\cap W^{1,q}(X,\di,\mm)$, $1<p\leq q<\infty$, is independent of the integrability exponent. We will tacitly rely on this fact in the note.

Let $p\in [1,\infty)$ and let $(X,\di,\mm)$ be a metric measure space with finite measure.
Then we set
\begin{equation}
\lambda_{1,p}(X,\di,\mm):=\inf\left\{\frac{1}{c_p^p(f)}\int_X \mathrm{lip}(f)^p\di\mm, \ : \ f\in \mathsf{Lip}_{bs}, \ f \ \textrm{non $\mm$-a.e. constant} \right\},
\end{equation}
where 
$$c^p_p(f):=\inf_{a\in \mathbb{R}}\int_X |f-a|^p\,\di\mm\, .$$

When the space is clear from the context we will drop the dependence on $(X,\di,\mm)$ by putting $\lambda_{1,p}:=\lambda_{1,p}(X,\di,\mm)$. When $p=2$ we will often use the notation $\lambda_1:=\lambda_{1,2}$.

As pointed out in \cite[Section 9]{AH17} by relying on some results contained in \cite{ADG}, it holds $\lambda_{1,1}(X,\di,\mm)=h(X)$ where $h(X)$ is the Cheeger constant of the space $(X,\di,\mm)$ as defined in \eqref{eq:defChConst}. Moreover, for $1<p<\infty$ we have the equivalent formulation 
\begin{equation}\label{eq: eqdeflambda1p}
\lambda_{1,p}(X,\di,\mm)=\inf\left\{\int_X|\nabla f|^p\di\mm\,:\, f\in\Lambda_{p}(X,\di,\mm)\right\}
\end{equation}
where 
\begin{equation}
\Lambda_p(X,\di,\mm):=\left\{f\in W^{1,p}(X,\di,\mm)\, :\, \int_X|f|^p\di\mm=1\, ,\, \int_X|f|^{p-2}f\di\mm=0\right\}.
\end{equation}

For all $f\in W^{1,2}(X,\di,\mm)$, the subdifferential $\partial\mathsf{Ch}_{\mm,2}(f)$ of the Cheeger energy at $f$ is defined as 
\begin{equation}
\partial\mathsf{Ch}_{\mm,2}(f):=\left\{\ell \in L^2(X,\mm)\ : \ \int_X\ell(g-f)\,\di\mm\leq  \mathsf{Ch}_{\mm,2}(g)-\mathsf{Ch}_{\mm,2}(f) \quad \forall g\in L^2(X,\mm)\right\}.
\end{equation}

We denote by $(H_{t})_{t\geq 0}$, and we refer to it as \textit{heat flow}, the $L^{2}(X,\mm)$-gradient flow of the Cheeger energy, i.e. for any $f\in L^2(X,\mm)$ the map $t\mapsto H_t f$ is a locally Lipschitz map from $(0,\infty)$ to $L^2(X,\mm)$ such that $H_tf\to f$ in $L^2(X,\mm)$ as $t\to 0$ and
\begin{equation}
\diff{}{t} H_{t} f \in -\partial\mathsf{Ch}_{\mm,2}(H_t f) \ \ \textrm{for a.e.} \ t\in (0,\infty)\, .
\end{equation} 

\medskip

We recall now the $\mathsf{RCD}$ condition, a reinforcement of the $\mathsf{CD}$ condition introduced by Ambrosio, Gigli and Savar\'e \cite{AGS1} (in case $\mm(X)<\infty$; see also \cite{AGMR} for the current axiomatization and the extension to $\sigma$-finite measures). 
\begin{definition}[$\mathsf{RCD}(K,\infty)$ condition]
A metric measure space $(X,\mathsf{d},\mm)$ satisfies the $\mathsf{RCD}(K,\infty)$ condition, $K\in \R$, if it is $\mathsf{CD}(K,\infty)$ and the Cheeger energy $\mathsf{Ch}_{\mm,2}$ is quadratic. 
\end{definition}

The quadraticity of the Cheeger energy is equivalent to the fact that $W^{1,2}(X,\di,\mm)$ is Hilbert.  Such an extra requirement singles out the ``Riemannian'' m.m.s structures out of the ``possibly Finsler'' ones.

The set $D(\Delta)$ is defined as the set of $f\in L^2(X,\mm)$ such that $\partial\mathsf{Ch}_{\mm,2}(f)\neq \emptyset$. In particular, $D(\Delta)\subset W^{1,2}(X,\di,\mm)$. For $f\in D(\Delta)$ we define $-\Delta f$ as the element of minimal $L^2(X,\mm)$ norm in $\partial\mathsf{Ch}_{\mm,2}(f)$.

We recall that on an $\mathsf{RCD}(K,\infty)$ space the heat flow can be extended to a linear semigroup of contractions in $L^p(X,\mm)$ for every $p\in [1,\infty)$. For every $f\in L^2(X,\mm)$ and any $t>0$ we have $H_tf\in D(\Delta)$.
The maximum principle ensures that for any $C>0$ and any $f\in L^{2}(X,\mm)$ with $0\le f \le C$ $\mm$-a.e.,  it holds $0\le H_tf\le C$. 
\\ The semigroup $H_t$ admits an $\mm\otimes \mm$-measurable density kernel ${\rm \rho}_{t} (x,y)$, so that
$$H_tf(x)=\int_X f(y){\rm \rho}_{t} (x,y) \, \di\mm(y), \quad  \text{ for } \mm\text{-a.e. } x\in X,  \quad \textrm{for any } f\in L^2(X,\mm).$$
We also know (see \cite[Theorem 6.1]{AGS1}) that, up to a suitable choice of $\mm$-a.e. representative, $H_tf$ belongs to $\mathcal{C}(X)\cap L^{\infty}((0,\infty)\times X)$ whenever $f\in L^{\infty}(X,\mm),$ where $\mathcal{C}(X)$ denotes the set of real valued continuous functions over $X$.
Moreover, for any $f\in L^2\cap L^{\infty}(X,\mm)$ and for every $t>0$ the regularizing property of the heat flow yields $H_tf\in \mathsf{Lip}_{b}(X)$ with the bound (sharp in the case $K>0$) \cite[Proposition 3.1]{DeMo}
\begin{equation}\label{strong Feller}
\begin{aligned}
&\| \,|\nabla H_tf|\, \|_{\infty}\leq \sqrt{\frac{2K}{\pi(e^{2Kt}-1)}} \; \| f\|_{\infty}\, \quad \textrm{if} \ K\neq 0, \\
&\| \,|\nabla H_tf|\, \|_{\infty}\leq \sqrt{\frac{ 1 }{\pi t}}\; \| f\|_{\infty} \quad \textrm{if} \ K=0.
\end{aligned}
\end{equation}

The $1$-Bakry-\'Emery inequality, proved in the $\mathsf{RCD}$ setting by Savar\'e \cite[Corollary 3.5]{Savare}, ensures that
\begin{equation}\label{dis: Savare forte}
|\nabla H_tf|\leq e^{-Kt}H_t(|\nabla f|), \quad \mm\text{-a.e. } \quad \textrm{for any} \ f\in W^{1,2}(X,\di,\mm). 
\end{equation}
\medskip

We next recall the classical notion of \emph{ultracontractivity}.
\begin{definition}\label{def:ultracontractive}
The semigroup $H_t$ is $L^2\to L^{\infty}$ ultracontractive, or simply ultracontractive, if there exists a positive function $\theta(t)$ such that for any $t>0$ and any $f\in L^2(X,\mm)$ we have $H_tf\in L^{\infty}(X,\mm)$ with 
$$\| H_tf \|_{\infty}\leq \theta(t)\| f\|_{2}\, .$$
\end{definition}
Under this assumption, for every $t>0$ and every $x\in X$ there exists a Lipschitz version of the density kernel $\rho_t(x,\cdot)$. Moreover, it is easy to prove (see e.g. \cite[Theorem 14.4]{Grig}) that $H_t$ is ultracontractive if and only if for every $t>0$ and $x\in X$ we have ${\rm \rho}_{2t} (x,x) \leq \theta^2(t)$.

We conclude the section by stating a result which ensures the ultracontractivity of the heat semigroup.

\begin{proposition}\label{prop:Tamaninicond}
Let $(X,\di,\mm)$ be an $\mathsf{RCD}(K,\infty)$ metric measure space for some $K\in \R$.
Assume that there exists a positive function $A(r)$ such that
\begin{equation}\label{eq:VolLB}
\mm(B_r(x))>A(r) \quad \textrm{for every} \ x\in X, \, r\in (0,\infty).
\end{equation}
Then $H_t$ is ultracontractive.
\end{proposition}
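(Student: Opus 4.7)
The strategy is to combine the dimension-free Harnack inequality of Wang-type---available in the $\mathsf{RCD}(K,\infty)$ setting as a consequence of the same $\mathrm{BE}(K,\infty)$ condition underlying the gradient estimate \eqref{dis: Savare forte}---with the $L^1$-contractivity of $H_t$ and the volume non-collapsing hypothesis \eqref{eq:VolLB}. Recall that, for every $t>0$, every $0\le f\in L^2\cap L^\infty(X,\mm)$, and all $x,y\in X$, Wang's inequality reads
\begin{equation*}
(H_t f(x))^2 \;\leq\; H_t(f^2)(y)\, \exp\!\left(\frac{K\, \di(x,y)^2}{1-e^{-2Kt}}\right),
\end{equation*}
with the convention $K/(1-e^{-2Kt}):=1/(2t)$ for $K=0$. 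The coefficient $K/(1-e^{-2Kt})$ in front of $\di(x,y)^2$ is strictly positive in each of the three regimes $K>0$, $K=0$, $K<0$, so the exponential factor is always a genuine Gaussian-type growing term in the variable $\di(x,y)$.

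Given this tool, the plan is the following. Fix $x\in X$ and $r>0$, and integrate the Harnack estimate in the variable $y$ over $B_r(x)$. Since $\di(x,y)\le r$ on $B_r(x)$, the exponential factor can be pulled out of the integral, yielding
\begin{equation*}
(H_tf(x))^2\, \mm(B_r(x))\; \leq\; \exp\!\left(\frac{K r^2}{1-e^{-2Kt}}\right)\int_{B_r(x)} H_t(f^2)(y)\, \di\mm(y).
\end{equation*}
Using positivity of $H_t(f^2)$ together with the $L^1$-contractivity of $H_t$, the right-hand integral is bounded by $\|H_t(f^2)\|_{L^1}\leq \|f^2\|_{L^1}=\|f\|_{L^2}^2$; combining with $\mm(B_r(x))\ge A(r)>0$, one obtains
\begin{equation*}
(H_tf(x))^2\;\leq\; \frac{1}{A(r)}\exp\!\left(\frac{Kr^2}{1-e^{-2Kt}}\right)\|f\|_{L^2}^2.
\end{equation*}
Taking the supremum over $x\in X$ (which is pointwise-meaningful since $H_tf\in \mathcal{C}(X)$ when $f\in L^\infty$) and choosing, say, $r=1$ produces a finite positive function $\theta(t)$ such that $\|H_tf\|_{L^\infty}\leq \theta(t)\|f\|_{L^2}$ for every such $f$. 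The sign restriction $f\ge 0$ is removed by splitting $f=f^+-f^-$, and the boundedness restriction is removed by a standard truncation/density argument in $L^2$, passing to an $\mm$-a.e.\ convergent subsequence to transfer the pointwise $L^\infty$ bound from the truncations to the limit.

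The main non-elementary input, and the main expected obstacle, is the dimension-free Harnack inequality of Wang in the $\mathsf{RCD}(K,\infty)$ framework for arbitrary $K\in \mathbb{R}$. This is by now available through the $\Gamma_2$-calculus provided by the Eulerian $\mathrm{BE}(K,\infty)$ condition---which is equivalent to the Lagrangian synthetic $\mathsf{RCD}(K,\infty)$ hypothesis---together with an interpolation-along-the-heat-flow argument in the spirit of Wang's original derivation. Once this Harnack estimate is in hand, the remainder of the proof is essentially mechanical; the only bookkeeping subtlety is the sign of $1-e^{-2Kt}$ in the negative curvature case $K<0$, but as already noted the quotient $K/(1-e^{-2Kt})$ remains positive uniformly in $K\in\mathbb{R}$ and $t>0$, so the argument goes through without modification.
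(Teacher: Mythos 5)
Your argument is correct, but it reaches the conclusion by a genuinely different (though closely related) route from the paper's. The paper's proof is a two-line citation: it invokes Tamanini's pointwise Gaussian upper bound for the heat kernel on $\mathsf{RCD}(K,\infty)$ spaces, namely $\rho_t(x,y)\le \big(\mm(B_{\sqrt t}(x))\,\mm(B_{\sqrt t}(y))\big)^{-1/2}\exp\big(C_\varepsilon(1+C_Kt)-\di(x,y)^2/((4+\varepsilon)t)\big)$, combines it with the volume lower bound \eqref{eq:VolLB} to obtain a uniform on-diagonal bound $\rho_{2t}(x,x)\le\theta^2(t)$, and concludes via the standard equivalence between on-diagonal kernel bounds and $L^2\to L^\infty$ ultracontractivity recalled just before the statement. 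You instead take Wang's dimension-free Harnack inequality as the input and integrate it in $y$ over a ball $B_r(x)$, using positivity, $L^1$-contractivity of $H_t$ and \eqref{eq:VolLB}. The two arguments are near neighbours: the Harnack inequality is precisely the starting point of the reference the authors cite (whose title is ``From Harnack inequality to heat kernel estimates\dots''), and your integration-over-a-ball step is the classical mechanism by which Harnack inequalities yield on-diagonal upper bounds. Your route buys a kernel-free, two-line derivation of the $L^2\to L^\infty$ bound once the Harnack inequality is granted; the paper's route buys an off-the-shelf citation with explicit Gaussian decay and avoids having to justify the Harnack inequality in the non-smooth setting --- though that justification is indeed available for every $K\in\R$ (H.-Q. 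Li's work, or the interpolation-along-the-flow argument from the $\mathrm{BE}(K,\infty)$ gradient estimate, as you indicate), so this is not a gap. Two cosmetic remarks: the exponent in the $p=2$ Harnack inequality on $\mathsf{RCD}(K,\infty)$ spaces is usually written $K\,\di(x,y)^2/(e^{2Kt}-1)$ rather than $K\,\di(x,y)^2/(1-e^{-2Kt})$ (for $K<0$ your form is formally stronger than the standard one), but since all that matters is that the coefficient is finite and positive for every $K\in\R$ and $t>0$, either normalization makes the argument work; and the reduction to non-negative $f$ is most cleanly done via $|H_tf|\le H_t|f|$ rather than by splitting into positive and negative parts.
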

\begin{proof}
By a recent result of Tamanini \cite[Corollary 3.3]{Tam}, the heat kernel on an $\mathsf{RCD}(K,\infty)$ space satisfies the following point-wise Gaussian bounds: there exists $C_{K}>0$ depending only on $K$ (if $K\geq 0$, one can choose $C_{K}=0$) and for every $\varepsilon>0$ there exists $C_{\varepsilon}>0$ such that
\begin{equation}\label{eq:heatKernelBound}
0\leq {\rm \rho}_{t} (x,y) \leq \frac{1}{\sqrt{\mm(B_{\sqrt{t}} (x) ) \, \mm(B_{\sqrt{t}} (y) ) }}  \exp\left(C_{\varepsilon} (1+C_{K} t)  - \frac{\di(x,y)^{2}}{(4+\varepsilon)t} \right).
\end{equation}
The assumption  \eqref{eq:VolLB} combined with \eqref{eq:heatKernelBound} gives that the heat kernel is uniformly bounded from above on the diagonal.
This implies the ultracontractivity.
\end{proof}

\subsection{Functions of bounded variation and perimeter}\label{subsec:BV}

\begin{definition}[${\rm BV}$ space]
A function $f\in L^1(X,\mm)$ belongs to the space ${\rm BV}(X,\di,\mm)$ of functions of bounded variation (see \cite{Mir,AD}) if there exists a sequence $(f_n)_{n\in\mathbb{N}}\in \mathsf{Lip}_{loc}(X)$ converging to $f$ in $L^1(X,\mm)$ and such that
$$\limsup_{n\to \infty}\int_X \mathrm{lip}(f_n)\, \di\mm <+\infty\, .$$

If $f\in {\rm BV}(X,\di,\mm)$ and $A\subset X$ is open, we define 
\begin{equation}\label{eq:defTV}
|Df|(A):=\inf\bigg\{\liminf_{n\rightarrow \infty}\int_A \mathrm{lip}(f_n)\,\di\mm: f_n\in \mathsf{Lip}_{loc}(X), f_n\rightarrow f \ \mathrm{in} \ L^1(A,\mm)\bigg\}.
\end{equation}
\end{definition}

It is known that this function is the restriction to open sets of a finite Borel measure, called \textit{total variation} of $f$ and denoted by $|Df|$.
\\By the very definition of $|Df|(X)$, it is immediate to see that for all $f, f_n\in {\rm BV}(X,\di,\mm)$
\begin{equation}\label{eq:lscbv}
|Df|(X)\leq \liminf_n |Df_n|(X) \quad \textrm{whenever} \ f_n\rightarrow f \ \textrm{in} \ L^1(X,\mm).
\end{equation}
Moreover, for all $\varphi:\R\to \R$ $1$-Lipschitz with $\varphi(0)=0$ we have 
$$|D(\varphi\circ f)|(X)\leq |Df|(X).$$

When $(X,\di,\mm)$ is an  $\mathsf{RCD}(K,\infty)$ space and $f\in {\rm BV}(X,\di,\mm)$, a result of Ambrosio  and Honda \cite[Proposition 1.6.3]{AH17} ensures that $H_tf \in {\rm BV}(X,\di,\mm)$ with the explicit inequality
\begin{equation}\label{dis: 1-BE for BV}
|DH_tf|(X)\leq e^{-Kt}|Df|(X)\, .
\end{equation}

Given a Borel set $E$ of finite measure, we say that $E$ is a set of finite perimeter if $\chi_E\in {\rm BV}(X,\di,\mm)$ and we set $\Per(E):=|D\chi_E|(X)$, i.e.
\begin{equation}\label{def:per}
\mathrm{Per}(E)=\inf\bigg\{\liminf_{n\rightarrow \infty}\int_X \mathrm{lip}(f_n)\,\di\mm: f_n\in \mathsf{Lip}_{loc}(X), f_n\rightarrow \chi_E \ \mathrm{in} \ L^1(X,\mm)\bigg\}.
\end{equation}
We remark that we can replace the set $\mathsf{Lip}_{loc}(X)$ in definition \eqref{def:per} with the set $\mathsf{Lip}_{bs}(X)$, and we can also suppose that $0\le f_n \le 1$ (see \cite[Remark 3.4, 3.5]{ADG}).

\begin{proposition}[Coarea inequality and coarea formula]\label{prop:Coarea}
Let $(X,\di)$ be a complete metric space and let $\mm$ be a non-negative Borel measure finite on bounded subsets.
\\Let $f\in {\mathsf {Lip}}_{bs}(X)$, $f:X\to [0,\infty)$ and set $M=\sup_{X} f$. 
Then for $\mathcal{L}^{1}$-a.e. $t>0$ the set $\{f> t \}$ has finite perimeter and 
\begin{equation}\label{eq:coarea}
\int_{0}^{M} \Per(\{f>t\}) \, \di t \leq \int_{X} |{\rm lip}(f)| \, \di \mm.
\end{equation}

If in addition $(X,\di)$ is  separable, then the coarea formula for $\rm{BV}$ functions holds, i.e. for every $f:X\to [0,\infty)$ with  $f\in {\rm BV}(X, \di,\mm)$ it holds
\begin{equation}\label{eq:coareaBV}
\int_{0}^{\infty} \Per(\{f>t\}) \, \di t= |Df|(X).
\end{equation}
\end{proposition}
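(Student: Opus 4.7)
The plan is to prove the two assertions separately: Part~1 by Lipschitz truncation and Fubini, the $\leq$ direction of the coarea formula by combining Part~1 with the $L^{1}$-lower semicontinuity \eqref{eq:lscbv} of the perimeter, and the $\geq$ direction by a layer-cake Riemann-sum approximation of $f$.

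For Part~1, for each $t\in(0,M)$ I would consider the Lipschitz truncations
\[
g^{t}_{\varepsilon}(x):=\min\!\left\{1,\frac{(f(x)-t)_{+}}{\varepsilon}\right\}\in\mathsf{Lip}_{bs}(X),
\]
which converge to $\chi_{\{f>t\}}$ in $L^{1}(X,\mm)$ as $\varepsilon\downarrow 0$ by dominated convergence (since $f$ has bounded support, $\mm(\{f>t\})<\infty$). The chain rule for the slope of a Lipschitz composition yields $\mathrm{lip}(g^{t}_{\varepsilon})\leq\varepsilon^{-1}\mathrm{lip}(f)\chi_{\{t<f\leq t+\varepsilon\}}$, so by the definition of perimeter,
\[
\Per(\{f>t\})\leq\liminf_{\varepsilon\downarrow 0}\frac{1}{\varepsilon}\int_{\{t<f\leq t+\varepsilon\}}\mathrm{lip}(f)\,\di\mm.
\]
Integrating in $t\in(0,M)$, pulling the $\liminf$ outside via Fatou, and swapping by Fubini, the right-hand side becomes $\int_{X}\mathrm{lip}(f)(x)\,\varepsilon^{-1}\mathcal{L}^{1}(\{t\in(0,M):\,t<f(x)\leq t+\varepsilon\})\,\di\mm(x)$, and the inner Lebesgue measure is bounded above by $\varepsilon$, giving the claimed inequality.

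For the $\leq$ direction of Part~2 I would pick $(f_{n})\subset\mathsf{Lip}_{loc}(X)$ with $f_{n}\to f$ in $L^{1}$ and $\int\mathrm{lip}(f_{n})\,\di\mm\to|Df|(X)$, reducing to $f_{n}\in\mathsf{Lip}_{bs}$ with $f_{n}\geq 0$ by a standard truncation-and-cutoff. The pointwise identity $|a-b|=\int_{0}^{\infty}|\chi_{(0,a)}(s)-\chi_{(0,b)}(s)|\,ds$ for $a,b\geq 0$ integrates to the layer-cake equality $\|f_{n}-f\|_{L^{1}}=\int_{0}^{\infty}\|\chi_{\{f_{n}>t\}}-\chi_{\{f>t\}}\|_{L^{1}}\,dt$, so up to a subsequence $\chi_{\{f_{n}>t\}}\to\chi_{\{f>t\}}$ in $L^{1}$ for $\mathcal{L}^{1}$-a.e.~$t$. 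Combining the $L^{1}$-lower semicontinuity \eqref{eq:lscbv} of the perimeter, Fatou, and Part~1 applied to each $f_{n}$ then yields
\[
\int_{0}^{\infty}\Per(\{f>t\})\,dt\leq\liminf_{n}\int_{0}^{\infty}\Per(\{f_{n}>t\})\,dt\leq\liminf_{n}\int_{X}\mathrm{lip}(f_{n})\,\di\mm=|Df|(X).
\]

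For the $\geq$ direction I would approximate $f$ by layer-cake Riemann sums. Given a mesh $\delta_{k}\to 0$ and a shift $\tau_{k}\in[0,\delta_{k})$, set $f_{k}:=\delta_{k}\sum_{j\geq 1}\chi_{\{f>j\delta_{k}+\tau_{k}\}}$; then $0\leq f_{k}\leq f$ and $f-f_{k}\leq 2\delta_{k}$, whence $f_{k}\to f$ in $L^{1}$ by dominated convergence (dominated by $f\in L^{1}$). Subadditivity of $|D\cdot|(X)$, applied to the partial sums and passed to the countable limit via \eqref{eq:lscbv}, yields
\[
|Df_{k}|(X)\leq\delta_{k}\sum_{j\geq 1}\Per(\{f>j\delta_{k}+\tau_{k}\}),
\]
and a further application of \eqref{eq:lscbv} gives $|Df|(X)\leq\liminf_{k}|Df_{k}|(X)$. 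The main obstacle is then the selection of the shifts $\tau_{k}$ so that the Riemann sums converge to $\int_{0}^{\infty}\Per(\{f>t\})\,dt$: since $g(t):=\Per(\{f>t\})$ is only Borel measurable (not Riemann integrable in general), I would average over $\tau\in[0,\delta_{k})$ to obtain $\delta_{k}^{-1}\int_{0}^{\delta_{k}}\!\big(\delta_{k}\sum_{j}g(j\delta_{k}+\tau)\big)d\tau=\int_{\delta_{k}}^{\infty}g(s)\,ds\leq\int_{0}^{\infty}g(s)\,ds$, and then select $\tau_{k}$ with Riemann sum at most this average. Separability of $(X,\di)$ is used here precisely to ensure measurability of $t\mapsto\Per(\{f>t\})$ and to legitimise the countable subadditivity step.
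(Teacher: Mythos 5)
The paper does not actually prove this proposition: it cites \cite[Proposition 3.5]{DeMo} for the coarea inequality and refers to \cite{AD} and \cite{Mir} for the coarea formula. Your argument reconstructs, in substance, exactly the proofs carried out in those references --- the truncations $\min\{1,(f-t)_+/\varepsilon\}$ plus Fubini for \eqref{eq:coarea}, Lipschitz approximation plus the lower semicontinuity \eqref{eq:lscbv} and Fatou for the ``$\le$'' half of \eqref{eq:coareaBV}, and layer-cake Riemann sums with an averaged choice of shift for the ``$\ge$'' half --- so this is the same approach and it is correct. Two small points are worth fixing. First, the pointwise bound $\mathrm{lip}(g^{t}_{\varepsilon})\le \varepsilon^{-1}\mathrm{lip}(f)\chi_{\{t<f\le t+\varepsilon\}}$ can fail on the level set $\{f=t\}$, where $g^{t}_{\varepsilon}$ vanishes but its slope need not; replacing the indicator by $\chi_{\{t\le f\le t+\varepsilon\}}$ repairs this and changes nothing downstream, since the fibre $\{t>0:\ t\le f(x)\le t+\varepsilon\}$ still has length at most $\varepsilon$. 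Second, you correctly identify the measurability of $t\mapsto\Per(\{f>t\})$ as the delicate point in the averaging/Fubini selection of $\tau_k$, but you only assert it; it does require an argument, for instance: $t\mapsto\chi_{\{f>t\}}$ is right-continuous from $(0,\infty)$ into $L^{1}(X,\mm)$ (as $s\downarrow t$ the sets $\{f>s\}$ increase to $\{f>t\}$, which has finite measure by Chebyshev since $f\in L^{1}$ and $t>0$), hence Borel, while $u\mapsto|Du|(X)$ is $L^{1}$-lower semicontinuous, hence Borel on $L^{1}(X,\mm)$, so the composition is Borel. With that supplied, the remaining steps (finite subadditivity of the total variation extended to the series via \eqref{eq:lscbv}, and the two applications of lower semicontinuity) are all legitimate.
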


\begin{proof}
For a proof of the first part, see for instance  \cite[Proposition 3.5]{DeMo}. 
\\The second claim was already observed in the introduction of \cite{AD} and can be proved along the lines of \cite{Mir}. 
\end{proof}

The next corollary will be useful later in the paper.
\begin{corollary}[Finiteness of the Cheeger constant]\label{cor:h(X)finite}
Let $(X,\di,\mm)$ be as in Proposition \ref{prop:Coarea} (first part)  with  $\mm(X)\in (0,\infty]$ and ${\rm diam}(X)> 0$. Then  the Cheeger constant  defined in \eqref{eq:defChConst} is finite, i.e. $h(X)\in [0,\infty)$.
\end{corollary}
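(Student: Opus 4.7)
The plan is to exhibit a single Borel set $A\subset X$ fitting the admissible class of \eqref{eq:defChConst} with $\Per(A)/\mm(A)<\infty$. The natural candidate is a small ball around a point $x_0$ of $\textsf{supp}(\mm)$: its finite perimeter will follow from the coarea inequality \eqref{eq:coarea} of Proposition \ref{prop:Coarea} applied to the $1$-Lipschitz cutoff $f(x):=\max\{r-\di(x,x_0),0\}$, while the required measure bounds are obtained by choosing the radius appropriately.

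More precisely, I would fix $x_0\in \textsf{supp}(\mm)$ (nonempty since $\mm(X)>0$) and $0<r<\mathrm{diam}(X)/2$, so that $f\in\mathsf{Lip}_{bs}(X)$ is supported in $\overline{B_r(x_0)}$, which has finite $\mm$-measure because $\mm$ is finite on bounded sets. Since $\mathrm{lip}(f)\le 1$ and vanishes outside $\overline{B_r(x_0)}$, one has $\int_X \mathrm{lip}(f)\,\di\mm<\infty$, and \eqref{eq:coarea} produces some $t_0\in(0,r)$ with $\Per(\{f>t_0\})<\infty$. The resulting set $A:=B_{r-t_0}(x_0)$ then has finite perimeter, is bounded, satisfies $\mm(A)>0$ (because $x_0\in\textsf{supp}(\mm)$), and in fact $\mm(A)<\mm(X)$: the choice $r<\mathrm{diam}(X)/2$ ensures the existence of $y\in X$ with $\di(y,x_0)>r-t_0$, and a small open neighborhood of $y$ in $X\setminus\overline{A}$ carries positive $\mm$-mass by $\textsf{supp}(\mm)=X$.

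If $\mm(X)=\infty$, then $A$ is immediately an admissible competitor in \eqref{eq:defChConst} and yields $h(X)\le \Per(A)/\mm(A)<\infty$. If $\mm(X)<\infty$ and $\mm(A)\le \mm(X)/2$, the same conclusion holds. In the remaining case $\mm(X)/2<\mm(A)<\mm(X)$, I would pass to the complement $A^c$, which satisfies $0<\mm(A^c)<\mm(X)/2$; the identity $\Per(A^c)=\Per(A)$ is elementary, following by replacing any $f_n\to \chi_A$ in $L^1$ with $1-f_n\to \chi_{A^c}$ (valid because $\mm(X)<\infty$) and using $\mathrm{lip}(1-f_n)=\mathrm{lip}(f_n)$.

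The only slightly delicate point is ensuring that the admissible class in \eqref{eq:defChConst} is non-empty, i.e.\ that one can produce a ball around $x_0$ whose complement still carries positive $\mm$-mass: this is precisely what the standing hypothesis $\textsf{supp}(\mm)=X$ from Section \ref{sec:prel} guarantees. Without it the statement would fail for degenerate examples such as a single Dirac mass on a space of positive diameter, where the infimum is vacuous and $h(X)=+\infty$. Everything else is a routine application of coarea and complement-symmetry of $\Per$ in finite-measure spaces.
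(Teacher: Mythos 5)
Your proof is correct and follows essentially the same route as the paper's: a distance-based Lipschitz cutoff centred at a point of $\mathrm{supp}(\mm)$ together with the coarea inequality \eqref{eq:coarea} produces a metric ball of positive measure and finite perimeter, which is then an admissible competitor in \eqref{eq:defChConst}. Your additional complementation step (and the observation that the standing assumption $\mathrm{supp}(\mm)=X$ is genuinely used) treats carefully the case $\mm(A)>\mm(X)/2$, which the paper's proof simply asserts can be avoided by choosing the radius suitably.
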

\begin{proof}
Let $x_0\in {\rm supp} \,\mm$ and consider $f(\cdot):=\max\{1-\di(x_0, \cdot), 0\}\in \mathsf{Lip}_{bs}(X)$.  By the non triviality assumptions on $(X,\di,\mm)$ and the coarea inequality (Proposition \ref{prop:Coarea}), it follows that there exists $r\in (0,1)$ such that the metric ball $B_r(x_0)$ satisfies $\mm(B_r(x_0))\in (0,\mm(X)/2)$ and $\Per(B_r(x_0))\in (0,\infty)$. Thus the set of competitors with finite energy in the variational problem \eqref{eq:defChConst} defining $h(X)$ is non empty and the conclusion follows.
\end{proof}
\vspace{0.3cm}

Let $K>0$, we denote by $I_K:[0,1]\to [0,\sqrt{K/(2\pi)}]$ the Gaussian isoperimetric profile function defined as $I_K:=\varphi_K\circ \Phi_K^{-1}$, where
$$\Phi_K(x):=\sqrt{\frac{K}{2\pi}}\int_{-\infty}^x e^{-Kt^2/2}\, \di t, \ \ x\in \mathbb{R},$$
and $\varphi_K:=\Phi_K'$. The function $I_K$ satisfies $I_K(1/2)=\sqrt{K/(2\pi)}$ and $I_K(x)=\sqrt{K}I_1(x)$. For simplicity of notation we set $I:=I_{1}$. We recall the following asymptotic (see \cite{BL})
\begin{equation}\label{id: asintotico I}
\lim_{x\to 0} \frac{I_{K}(x)}{x\sqrt{2K\log{\frac{1}{x}}}}=1 .
\end{equation}

As proved by Ambrosio and Mondino \cite[Theorem 4.2]{AM16}, the celebrated Gaussian isoperimetric inequality of Bakry-Ledoux \cite{BL} extends to the class of $\mathsf{RCD}(K,\infty)$ spaces with positive $K$:
\begin{proposition}\label{prop:Gauss isop}
Let $(X,\di,\mm)$ be an $\mathsf{RCD}(K,\infty)$ space with $\mm(X)=1$ and $K>0$. Then, for every Borel subset $A\subset X$ we have
\begin{equation}\label{eq:GII}
\Per(A)\geq I_K(\mm(A)).
\end{equation}
\end{proposition}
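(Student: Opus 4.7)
The plan is to adapt the classical semigroup interpolation of Bakry-Ledoux to the non-smooth setting of $\mathsf{RCD}(K,\infty)$ spaces, relying on the heat-flow calculus recalled in Section \ref{subsec:RCD}. As a first reduction, given a Borel set $A$ with $0<\mm(A)<1$, the definition of perimeter allows us to choose $f_n\in\mathsf{Lip}_{bs}(X)$ with $0\le f_n\le 1$, $f_n\to \chi_A$ in $L^1(X,\mm)$ and $\int_X |\nabla f_n|\,\di\mm\to\Per(A)$ (cf.\ the remarks after \eqref{def:per}). Consequently, \eqref{eq:GII} would follow from the Bobkov-type functional inequality
\begin{equation*}
I_K\!\left(\int_X f\,\di\mm\right)\leq \int_X\sqrt{I_K(f)^2+|\nabla f|^2}\,\di\mm
\end{equation*}
for every Lipschitz $f:X\to[0,1]$: indeed, applying it to $f_n$ and passing to the limit along a subsequence converging $\mm$-a.e., the left-hand side tends to $I_K(\mm(A))$ by continuity of $I_K$, while the right-hand side is dominated by $\int_X I_K(f_n)\,\di\mm+\int_X|\nabla f_n|\,\di\mm$ (via $\sqrt{a^2+b^2}\le a+b$) and converges to $\Per(A)$ by dominated convergence, using that $I_K(\chi_A)\equiv 0$ and that $I_K$ is bounded above by $\sqrt{K/(2\pi)}$.

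To prove the Bobkov inequality I would introduce the heat-flow regularisation $u_s:=H_sf$ and analyse the evolution in $s$ of a suitable interpolation functional of the form $\int_X G(I_K(u_s)^2,|\nabla u_s|^2,s)\,\di\mm$, designed so that at $s=0$ it agrees with the Bobkov right-hand side and as $s\to\infty$ it converges to $I_K(\int_X f\,\di\mm)$. Differentiating in $s$ via the heat equation, combining with Bochner's identity and the ODE $I_K I_K''=-K$ satisfied by the Gaussian profile, the monotonicity of this functional reduces to the curvature-dimension condition in its self-improved form $\Gamma_2(u)\ge K\,\Gamma(u)+\lvert \nabla \lvert\nabla u\rvert\rvert^2$ on $\mathsf{RCD}(K,\infty)$ spaces, due to Savar\'e \cite{Savare}. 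Taking the limit $s\to\infty$ relies crucially on the sharp gradient estimate \eqref{strong Feller}, which forces $\||\nabla u_s|\|_\infty\to 0$, together with the $L^2$-ergodicity $u_s\to \int_X f\,\di\mm$ of the heat flow, available on $\mathsf{RCD}(K,\infty)$ with $K>0$ via the Poincar\'e inequality with sharp constant $1/K$.

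The main obstacle is the rigorous implementation of the infinitesimal computation in the non-smooth setting: the square-root integrand is only borderline differentiable, $I_K'$ blows up at the endpoints of $[0,1]$, and Bochner-type identities are not directly at one's disposal. The standard remedy is to pre-regularise---replacing $u_s$ by $H_\delta u_s$ to gain Lipschitz regularity and $D(\Delta)$-membership, and the integrand by $\sqrt{\varepsilon+\cdot}$ to smooth the square root---carry out the formal identity within the $\Gamma_2$-calculus developed by Ambrosio-Gigli-Savar\'e \cite{AGS1, AGS} and refined by Savar\'e \cite{Savare}, and then remove the regularisations in the order $\varepsilon\downarrow 0$, $\delta\downarrow 0$, and $n\to\infty$. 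Once this machinery---which is the same engine behind \eqref{dis: Savare forte}---is in place, the remaining steps essentially reproduce the smooth Bakry-Ledoux argument.
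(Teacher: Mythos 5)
The paper does not actually prove this proposition: it is imported verbatim from Ambrosio--Mondino \cite[Theorem 4.2]{AM16}, so there is no in-paper argument to compare against. Your outline --- reducing \eqref{eq:GII} to a Bobkov-type functional inequality for $[0,1]$-valued Lipschitz functions and proving the latter by Bakry--Ledoux semigroup interpolation driven by Savar\'e's self-improved Bakry--\'Emery estimate \cite{Savare} --- is precisely the strategy of that reference, so your approach is the correct one; the only caveat is that the rigorous $\Gamma_2$-computation you defer to in your final paragraph is exactly the substantive content of the cited proof, so what you have is a faithful program rather than a self-contained argument.
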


Thanks to a recent result by Han \cite[Corollary 4.4]{Han} building on top of \cite{ABS19}, we also have at our disposal a rigidity statement for the Gaussian isoperimetric inequality. 

\begin{proposition}\label{prop: Han rigidity}
Let $(X,\mathsf{d},\mm)$ be an $\mathsf{RCD}(K,\infty)$ metric measure space with $\mm(X)=1$ and $K>0$. Let us suppose there exists a Borel set $E\subset X$ with positive measure such that 
\begin{equation}\label{eq: equality isoperimetric Han}
\Per(E)=I_K(\mm(E)).
\end{equation}
Then 
$$(X,\di,\mm)\cong (Y,\di_Y,\mm_Y)\times (\R,|\cdot|,\sqrt{K/(2\pi)}e^{-Kt^2/2}\di t)$$
for some $\mathsf{RCD}(K,\infty)$ space $(Y,\di_Y,\mm_Y)$, where $\cong$ denotes  isomorphism as metric measure spaces. 
\end{proposition}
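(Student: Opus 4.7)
The plan is to mimic the heat flow proof of the Bakry--Ledoux Gaussian isoperimetric inequality (Proposition \ref{prop:Gauss isop}), extract from the equality case a pointwise rigidity statement for the heat semigroup, and then invoke a rigidity theorem for the $1$-Bakry--\'Emery inequality. Let $E$ be the set of positive measure realising equality \eqref{eq: equality isoperimetric Han}, and set $u_t := H_t \chi_E$ for $t > 0$; note that $u_t$ is not $\mm$-a.e. constant since $\mm(E)\in (0,1)$.

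First I would introduce the Bobkov-type functional
$$\Psi(t) := \int_X \sqrt{I_K^2(u_t) + \frac{1-e^{-2Kt}}{K}\,|\nabla u_t|^2}\, \di\mm, \qquad t>0.$$
Working in the $\mathsf{RCD}(K,\infty)$ calculus (quadratic Cheeger energy plus the pointwise $1$-Bakry--\'Emery estimate \eqref{dis: Savare forte}), a direct computation along the lines of Bakry--Ledoux, adapted to the non-smooth setting as in Ambrosio--Mondino \cite{AM16}, shows that $\Psi$ is non-increasing on $(0,\infty)$. Using \eqref{dis: 1-BE for BV} together with $u_t\to \chi_E$ in $L^1$ one gets $\lim_{t\to 0^+}\Psi(t) = \Per(E)$, while the spectral gap associated to $K>0$ drives $u_t$ to the constant $\mm(E)$ as $t\to\infty$, so $\lim_{t\to\infty}\Psi(t) = I_K(\mm(E))$. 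The hypothesis $\Per(E) = I_K(\mm(E))$ therefore forces $\Psi$ to be constant on $(0,\infty)$.

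Retracing the chain of pointwise inequalities used in the monotonicity of $\Psi$, the identity $\Psi'(t)\equiv 0$ forces saturation, $\mm$-a.e. and for every $t>0$, of each intermediate estimate, and in particular forces equality in \eqref{dis: Savare forte} applied to the non-constant functions $u_t$: for all $s, t > 0$ one has
$$|\nabla H_s u_t| = e^{-Ks}\,H_s(|\nabla u_t|) \quad \mm\text{-a.e.}$$
At this point I would invoke the rigidity result of Han \cite[Corollary 4.4]{Han}, which extends \cite{ABS19} to $\mathsf{RCD}(K,\infty)$ with $K>0$: the existence of a non-constant function saturating \eqref{dis: Savare forte} implies that the space splits off an Ornstein--Uhlenbeck (Gaussian) factor, yielding precisely the required isomorphism with $(\R, |\cdot|, \sqrt{K/(2\pi)}e^{-Kt^2/2}\di t)$.

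The main obstacle is the middle step: choosing the correct time-dependent weight in $\Psi$ so that monotonicity holds in the non-smooth setting, and then inverting the constancy of $\Psi$ to recover pointwise saturation of \eqref{dis: Savare forte} rather than a weaker conclusion that would not feed into Han's splitting theorem. This is precisely the technical heart of \cite{Han} and the reason why the rigidity statement only appeared well after the inequality itself was established.
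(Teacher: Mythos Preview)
The paper does not prove this proposition at all: it is stated without proof and attributed directly to Han \cite[Corollary 4.4]{Han} (building on \cite{ABS19}). There is therefore no argument in the paper to compare your proposal against; the authors simply import the result as a black box and use it later in the proof of \autoref{thm:Main}.

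Your sketch is a reasonable outline of the strategy that actually underlies Han's proof: run the Bakry--Ledoux/Bobkov semigroup argument used in \cite{AM16} to obtain the inequality, observe that equality forces every intermediate pointwise estimate to saturate, and in particular forces equality in the $1$-Bakry--\'Emery contraction \eqref{dis: Savare forte} for a non-constant function, at which point the splitting is read off from the rigidity of that contraction. One caution on your write-up: you invoke ``\cite[Corollary 4.4]{Han}'' for the \emph{$1$-Bakry--\'Emery} rigidity step, but in the present paper that same citation is used for the \emph{isoperimetric} rigidity itself (i.e.\ for the very statement you are proving). You should therefore be careful to cite the correct intermediate theorem in \cite{Han} (the $1$-Bakry--\'Emery rigidity, which is the genuine input) rather than its isoperimetric corollary, to avoid apparent circularity. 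The ``main obstacle'' you flag---turning constancy of $\Psi$ into pointwise saturation of \eqref{dis: Savare forte} in the non-smooth setting---is indeed the technical core, and your proposal correctly identifies it without resolving it; that resolution is what \cite{Han} supplies.
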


We will take for granted the following key lemma, which can be obtained as a simpler variant of \cite[Lemma 1.5.8]{AH17} in the case of a fixed ambient space.

\begin{lemma}\label{lemma:weak}
Let $(X,\di,\mm)$ be an $\mathsf{RCD}(K,\infty)$ metric measure space. Let $(f_{k})_{k\in \mathbb{N}}$ be a sequence converging in $L^2(X,\mm)$ to $f$ and assume that
$$\sup_{k\in \mathbb{N}} \mathsf{Ch}_{\mm,2}(f_k)<\infty.$$
Then, for any lower semicontinuous function $g:X\to[0,\infty]$, it holds that
\begin{equation}\label{eq:techlsc}
\int_Xg|\nabla f|\di\mm\le\liminf_{k\to\infty}\int_Xg|\nabla f_{k}|\di\mm.
\end{equation} 	
\end{lemma}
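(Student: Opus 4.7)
My plan is to combine weak $L^{2}$-compactness of the gradient sequence $(|\nabla f_{k}|)$ with a monotone approximation of the weight $g$ from below, reducing the inequality to the standard $L^{2}$-lower semicontinuity of the Cheeger energy. This is the ``fixed ambient space'' specialisation of the argument of \cite[Lemma 1.5.8]{AH17}, which is considerably simpler here because no measured Gromov--Hausdorff convergence is involved.

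First, since $\sup_{k}\mathsf{Ch}_{\mm,2}(f_{k})<\infty$, the sequence $(|\nabla f_{k}|)$ is bounded in $L^{2}(X,\mm)$. I may assume the right-hand side of \eqref{eq:techlsc} is finite (otherwise there is nothing to prove) and, passing to a subsequence, that $\int_{X} g\,|\nabla f_{k}|\,\di\mm$ converges to this liminf. A further application of $L^{2}$-reflexivity gives a subsequence along which $|\nabla f_{k}|\rightharpoonup G$ weakly in $L^{2}(X,\mm)$, for some nonnegative $G\in L^{2}(X,\mm)$.

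The central intermediate claim is the pointwise bound $|\nabla f|\le G$ $\mm$-a.e., and I expect this to be the main obstacle: extracting a pointwise bound from weak $L^{2}$-convergence is delicate. Following a standard Mazur argument, one chooses convex combinations $G_{N}:=\sum_{k}\alpha_{N,k}|\nabla f_{k}|$ converging strongly in $L^{2}$ to $G$; the same convex combinations $\tilde f_{N}:=\sum_{k}\alpha_{N,k}f_{k}$ converge to $f$ in $L^{2}$, and subadditivity together with positive $1$-homogeneity of the minimal weak upper gradient give $|\nabla\tilde f_{N}|\le G_{N}$ $\mm$-a.e. The $L^{2}$-lower semicontinuity of the Cheeger energy applied along $\tilde f_{N}\to f$, combined with the characterisation of $|\nabla f|$ as the minimal weak upper gradient in $L^{2}$, then yields $|\nabla f|\le G$ $\mm$-a.e.

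Finally, I would approximate $g$ from below by a monotone sequence $g_{n}\uparrow g$ of nonnegative, bounded, lower semicontinuous functions supported on bounded sets, for instance $g_{n}(x):=(g(x)\wedge n)\,\psi_{n}(x)$ with $\psi_{n}(x):=\bigl((n+1)-\di(x,x_{0})\bigr)^{+}\wedge 1$, which is a $1$-Lipschitz cut-off supported in $B_{n+1}(x_{0})$; since $\mm$ is finite on bounded sets, each $g_{n}$ lies in $L^{2}(X,\mm)$. Testing the weak convergence $|\nabla f_{k}|\rightharpoonup G$ against $g_{n}\in L^{2}(X,\mm)$ and using the pointwise bound from the previous step, together with $g_{n}\le g$, gives
\begin{equation*}
\int_{X} g_{n}\,|\nabla f|\,\di\mm \le \int_{X} g_{n}\,G\,\di\mm = \lim_{k\to\infty} \int_{X} g_{n}\,|\nabla f_{k}|\,\di\mm \le \liminf_{k\to\infty} \int_{X} g\,|\nabla f_{k}|\,\di\mm,
\end{equation*}
and monotone convergence as $n\to\infty$ on the left-hand side produces \eqref{eq:techlsc}.
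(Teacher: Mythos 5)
The paper does not actually prove Lemma \ref{lemma:weak}: it takes the statement for granted, citing \cite[Lemma 1.5.8]{AH17}, whose proof handles the harder situation of a sequence of ambient spaces converging in the measured Gromov--Hausdorff sense. Your argument is the natural specialisation to a fixed space and is correct in its structure: weak $L^2$-compactness of $(|\nabla f_k|)$ along a subsequence realising the liminf, identification of a weak limit $G$ dominating $|\nabla f|$, and testing against a monotone approximation $g_n\uparrow g$ by bounded functions with bounded support (which lie in $L^2(X,\mm)$ because $\mm$ is finite on bounded sets), followed by monotone convergence. The Mazur step is also sound, provided you apply Mazur's lemma to the pairs $(f_k,|\nabla f_k|)$ in $L^2\times L^2$ so that the \emph{same} convex coefficients work for both components (for the first component this is automatic anyway, since $f_k\to f$ strongly).

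One phrase should be made precise, since it is the crux of the intermediate claim: the ``$L^2$-lower semicontinuity of the Cheeger energy'' by itself only yields the global inequality $\int_X|\nabla f|^2\,\di\mm\le\int_X G^2\,\di\mm$, which is strictly weaker than the pointwise bound $|\nabla f|\le G$ $\mm$-a.e.\ that you need in order to test against $g_n$. What actually closes the step is the stability (closure) of the weak upper gradient relation proved in \cite{AGS}: if $u_N\to u$ in $L^2(X,\mm)$ and $h_N$ is a weak upper gradient of $u_N$ with $h_N\to h$ in $L^2(X,\mm)$ (weak convergence suffices), then $h$ is a weak upper gradient of $u$, whence $|\nabla u|\le h$ $\mm$-a.e.\ by minimality. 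Applying this with $u_N=\tilde f_N$ and $h_N=G_N$ gives $|\nabla f|\le G$ $\mm$-a.e., after which the rest of your argument goes through verbatim.
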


Another key property is the compactness in ${\rm BV}$. In order to state the result, we recall the next crucial definition.
\begin{definition}\label{def:IsopProf}
Let $(X,\di, \mm)$ be a m.m.s. with $\mm(X)=1$.  We shall say that $(X,\di,\mm)$ admits a superlinear isoperimetric profile if there exists a function $\omega:(0,\infty)\to (0,1/2]$ such that for all $\varepsilon>0$ it holds: 
\begin{equation}\label{eq:ExIsopProf}
\mm(E) \leq \omega(\varepsilon) \quad \Longrightarrow \quad \mm(E) \leq \varepsilon \, \mathrm{Per}(E)
\end{equation}
for any Borel set $E\subset X$.
\end{definition}

Notice that, classically, a metric measure space $(X,\di,\mm)$ such that $\mm(X)=1$ is said to admit an isoperimetric profile $I:[0,1]\to(0,\infty)$ if for any Borel set $E\subset X$ it holds that
\begin{equation*}
\Per(E)\ge I(\mm(E))\, . 
\end{equation*}

It is easy to check that the Gaussian isoperimetric inequality (Proposition \ref{prop:Gauss isop}) combined with the asymptotic \eqref{id: asintotico I} imply that if $(X,\di, \mm)$ is an $\mathsf{RCD}(K,\infty)$ space for some $K>0$, then it admits a  superlinear  isoperimetric profile.
We also know \cite[Theorem 7.2]{AH17} that any $\mathsf{RCD}(K,\infty)$ space with finite diameter has a superlinear isoperimetric profile. 

\begin{proposition}[Compactness in ${\rm BV}$ and $L^2$]\label{prop:compBV}
Let $(X,\di,\mm)$ be an $\mathsf{RCD}(K,\infty)$ metric measure space with $\mm(X)<\infty$ admitting a superlinear  isoperimetric profile (this is satisfied in case $K>0$ or ${\rm diam}(X)<\infty$). Then, for any sequence of functions $(f_{k})_{k\in \mathbb{N}}\subset \rm{ BV}(X,\di,\mm)$ such that 
\begin{equation}
\sup_k\left\{\int_X|f_{k}|\di\mm+|Df_{k}|(X)\right\}<\infty 
\end{equation}	
there exist  a function $f\in\rm{BV}(X,\di,\mm)$ and a subsequence $k_{j}$ such that 
\begin{equation}
{\rm sign}(f_{k_j})\sqrt{|f_{k_j}|} \rightarrow {\rm sign}(f)\sqrt{|f|} \qquad \textrm{in} \ L^2(X,\mm).
\end{equation}
Moreover, for any $1<p<\infty$ and for any sequence of functions $(f_{k})_{k\in \mathbb{N}}\subset W^{1,p}(X,\di,\mm)$ such that 
\begin{equation}
\sup_k \| f_k \|_{W^{1,p}}<\infty 
\end{equation}	
there exist  a function $f\in L^p(X,\mm)$ and a subsequence $k_{j}$ such that 
\begin{equation}
f_{k_j} \rightarrow f \qquad \textrm{in} \ L^p(X,\mm).
\end{equation}
\end{proposition}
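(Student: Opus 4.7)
The plan is to prove both statements with the same Rellich-style strategy: regularize by the heat flow, extract a compact subsequence at fixed regularization time, and conclude by a diagonal argument as the time tends to zero. For the BV part I will use the elementary pointwise inequality
\begin{equation*}
|\mathrm{sign}(a)\sqrt{|a|}-\mathrm{sign}(b)\sqrt{|b|}|^{2}\leq 2|a-b|,\qquad a,b\in\R
\end{equation*}
(verified by separating the cases $ab\geq 0$ and $ab<0$ and using AM--GM in the latter) to reduce the claim to $L^{1}(X,\mm)$-convergence of $(f_{k})_{k}$: integrating the pointwise bound, $L^{1}$-convergence of $(f_{k})$ upgrades automatically to $L^{2}$-convergence of $(\mathrm{sign}(f_{k})\sqrt{|f_{k}|})_{k}$. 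For the Sobolev part the analogous goal is an $L^{p}$-convergent subsequence of $(f_{k})$ itself.

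First I would prove the uniform approximation estimate
\begin{equation*}
\|H_{t}f-f\|_{L^{1}(X,\mm)}\leq J_{K}(t)\,|Df|(X),
\end{equation*}
with $J_K$ as in \eqref{eq:ExplJKIntro}, together with its $L^p$ analogue. This follows from a duality argument: test $H_{t}f-f=\int_{0}^{t}\tfrac{1}{2}\Delta H_{s}f\,\di s$ against $\varphi\in L^{\infty}(X,\mm)$, use self-adjointness of $H_{t}$ to transfer the evolution onto $\varphi$, integrate by parts against $Df$, and invoke \eqref{strong Feller} to bound $\|\,|\nabla H_{s}\varphi|\,\|_{\infty}$ (a direct computation of the $s$-integral yields precisely $J_{K}(t)$). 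In particular $\sup_{k}\|H_{t}f_{k}-f_{k}\|_{L^{1}}\to 0$ as $t\downarrow 0$.

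The second, more substantial step is the compactness of $(H_{t}f_{k})_{k}$ in $L^{1}(X,\mm)$ for each fixed $t>0$. The 1-Bakry--\'Emery inequality \eqref{dis: 1-BE for BV} and the $L^{1}$-contractivity of $H_{t}$ give $\sup_{k}(|DH_{t}f_{k}|(X)+\|H_{t}f_{k}\|_{L^{1}})<\infty$. To leverage the superlinear isoperimetric profile, I would argue as follows: by Chebyshev, $\mm(\{|H_{t}f_{k}|>M\})\leq C/M\leq \omega(\varepsilon)$ for $M$ large, so by \eqref{eq:ExIsopProf} applied level by level and the coarea formula \eqref{eq:coareaBV},
\begin{equation*}
\int_{M}^{\infty}\mm(\{|H_{t}f_{k}|>s\})\,\di s\leq \varepsilon\,|DH_{t}f_{k}|(X)\leq C\varepsilon.
\end{equation*}
A truncation of $H_{t}f_{k}$ at a level chosen via a coarea mean-value argument (to additionally control the $M\cdot\mm(\{|H_{t}f_{k}|>M\})$ tail term) then reduces matters to an $L^{\infty}$-bounded family. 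The Feller smoothing \eqref{strong Feller} applied to this truncation makes it equi-Lipschitz and uniformly bounded, while Ulam's theorem (finiteness of $\mm$ on the Polish space $X$) provides a compact exhaustion; Arzel\`a--Ascoli then extracts a subsequence converging uniformly on each compact set, hence in $L^{1}(X,\mm)$ after pushing $M\to\infty$.

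A standard diagonal argument over $t_{j}\downarrow 0$ now combines the uniform approximation and the fixed-$t$ compactness to produce a single subsequence $(f_{k_{j}})$ that is Cauchy in $L^{1}(X,\mm)$; the opening reduction then upgrades this to $L^{2}$-convergence of $(\mathrm{sign}(f_{k_{j}})\sqrt{|f_{k_{j}}|})_{j}$. The $W^{1,p}$ statement follows via the same scheme with $L^{p}$ in place of $L^{1}$ and $\int|\nabla\cdot|^{p}\di\mm$ in place of $|D\cdot|(X)$, the truncation-coarea step being applied to $|f_{k}|^{p}$. The hardest part will be the compactness of the regularized family at fixed $t$: in the absence of a polynomial Sobolev embedding (which would be available in the finite-dimensional $\mathsf{RCD}(K,N)$ setting and would drive the classical Rellich--Kondrachov theorem), the superlinear isoperimetric profile has to be substituted, via the coarea-truncation argument above, to rule out mass concentration at high levels of $H_{t}f_{k}$.
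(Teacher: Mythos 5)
The paper itself only points to \cite[Proposition 1.7.5]{AH17}, and your ${\rm BV}$ argument is essentially a faithful reconstruction of that heat-semigroup scheme: the reduction of the $L^2$-convergence of ${\rm sign}(f_k)\sqrt{|f_k|}$ to $L^1$-convergence of $f_k$ via the elementary pointwise inequality, the approximation bound $\|H_tf-f\|_{L^1}\le J_K(t)|Df|(X)$ obtained by duality from \eqref{strong Feller}, the uniform ${\rm BV}$ bound on $H_tf_k$ from \eqref{dis: 1-BE for BV}, the tail control via the superlinear isoperimetric profile combined with the coarea formula, and the Arzel\`a--Ascoli plus diagonal conclusion are all sound. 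Two small repairs: the truncation of $H_tf_k$ is uniformly bounded but not equi-Lipschitz, so you should either truncate $f_k$ \emph{first} and then apply $H_t$, or apply $H_s$ once more to the truncation and absorb the extra error $J_K(s)\,|DH_tf_k|(X)$ into the diagonal argument; and you should record that the limit $f$ actually belongs to ${\rm BV}(X,\di,\mm)$, which follows from $\sup_k|Df_k|(X)<\infty$ together with the lower semicontinuity \eqref{eq:lscbv}.

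The one genuine gap is in the $W^{1,p}$ part. ``The same scheme with $L^p$ in place of $L^1$'' does not go through verbatim: the duality argument pairs $H_tf-f$ against $\varphi\in L^{p'}(X,\mm)$ and therefore requires a bound of the form $\|\,|\nabla H_s\varphi|\,\|_{L^{p'}}\lesssim s^{-1/2}\|\varphi\|_{L^{p'}}$, which is not \eqref{strong Feller} (an $L^\infty$-to-Lipschitz bound) and, when $p'<2$, does not follow from the pointwise Bakry--\'Emery gradient estimate by Jensen's inequality. You can repair this either by an energy computation (differentiate $\|H_sf-f\|_{L^p}^p$ in $s$ and use \eqref{dis: Savare forte} together with the $L^p$-contractivity of the semigroup to get $\|H_tf-f\|_{L^p}\le C\sqrt{t}\,\|\,|\nabla f|\,\|_{L^p}$), or --- much more cheaply, and this is precisely why the ${\rm BV}$ statement is phrased in terms of ${\rm sign}(f)\sqrt{|f|}$ --- by deducing the Sobolev case from the ${\rm BV}$ case already proved: set $g_k:=|f_k|^{p-1}f_k$, observe that $\|g_k\|_{L^1}=\|f_k\|_{L^p}^p$ and $|Dg_k|(X)\le p\int_X|f_k|^{p-1}|\nabla f_k|\,\di\mm\le p\|f_k\|_{L^p}^{p-1}\|\,|\nabla f_k|\,\|_{L^p}$ are uniformly bounded, apply the first part to obtain ${\rm sign}(f_{k_j})|f_{k_j}|^{p/2}\to h$ in $L^2(X,\mm)$, and convert this back into $L^p$-convergence of $f_{k_j}$ through the elementary inequalities for the map $u\mapsto{\rm sign}(u)|u|^{2/p}$ (an additional H\"older step is needed when $p<2$).
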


\begin{proof}
The argument can be obtained arguing as in the proof of \cite[Proposition 1.7.5]{AH17}.
\end{proof}

\begin{proposition}[Stability in ${\rm BV}$]\label{prop:stabBV}
Let $(X,\di,\mm)$ be an $\mathsf{RCD}(K,\infty)$ metric measure space. Let $(f_{k})_{k\in \mathbb{N}}\subset \rm{ BV}(X,\di,\mm)$  be such that ${\rm sign}(f_k)\sqrt{|f_k|}$ converge to ${\rm sign}(f)\sqrt{|f|}$ in $L^2(X,\mm)$ and assume that $\sup_k|Df_k|(X)<\infty$. Then 	$f\in {\rm BV}(X,\di,\mm)$ and 
\begin{equation*}
|Df|(X)\le\liminf_{k\to\infty}|Df_k|(X).
\end{equation*}
\end{proposition}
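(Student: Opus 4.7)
My plan is to reduce the statement to the known $L^{1}$ lower semicontinuity of the total variation by upgrading the hypothesized $L^{2}$ convergence of the signed square roots to $L^{1}$ convergence of the functions themselves. Let me set $g_{k}:=\mathrm{sign}(f_{k})\sqrt{|f_{k}|}$ and $g:=\mathrm{sign}(f)\sqrt{|f|}$, so that by assumption $g_{k}\to g$ in $L^{2}(X,\mm)$. Note the algebraic identities $g_{k}^{2}=|f_{k}|$, $\mathrm{sign}(g_{k})=\mathrm{sign}(f_{k})$, hence $f_{k}=g_{k}|g_{k}|$, and analogously $f=g|g|$.

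The first step is the pointwise inequality
\begin{equation*}
\bigl|\,t|t|-s|s|\,\bigr|\le (|t|+|s|)\,|t-s|\quad \text{for all }t,s\in\R,
\end{equation*}
which follows from factoring $t|t|-s|s|$ on the regions $\{ts\ge 0\}$ and $\{ts<0\}$. Applying it pointwise and using the Cauchy--Schwarz inequality,
\begin{equation*}
\int_{X}|f_{k}-f|\,\di\mm\le \int_{X}(|g_{k}|+|g|)\,|g_{k}-g|\,\di\mm\le \bigl(\|g_{k}\|_{L^{2}}+\|g\|_{L^{2}}\bigr)\,\|g_{k}-g\|_{L^{2}},
\end{equation*}
and since $\|g_{k}\|_{L^{2}}$ is uniformly bounded (as $g_{k}\to g$ in $L^{2}$), the right-hand side tends to $0$. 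So $f_{k}\to f$ in $L^{1}(X,\mm)$.

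The second step is to deduce $f\in{\rm BV}(X,\di,\mm)$ together with the desired lower semicontinuity directly from the definition of ${\rm BV}$ via a diagonal argument. For each $k$ the definition of $|Df_{k}|(X)$ provides a sequence $(f_{k}^{(j)})_{j}\subset \mathsf{Lip}_{loc}(X)$ with $f_{k}^{(j)}\to f_{k}$ in $L^{1}(X,\mm)$ and $\int_{X}\mathrm{lip}(f_{k}^{(j)})\,\di\mm\to |Df_{k}|(X)$ as $j\to\infty$. Choosing $j=j(k)$ so that $\|f_{k}^{(j(k))}-f_{k}\|_{L^{1}}\le 1/k$ and $\int_{X}\mathrm{lip}(f_{k}^{(j(k))})\,\di\mm\le |Df_{k}|(X)+1/k$, the functions $h_{k}:=f_{k}^{(j(k))}$ lie in $\mathsf{Lip}_{loc}(X)$, converge to $f$ in $L^{1}(X,\mm)$, and satisfy
\begin{equation*}
\liminf_{k\to\infty}\int_{X}\mathrm{lip}(h_{k})\,\di\mm\le \liminf_{k\to\infty}|Df_{k}|(X)<\infty.
\end{equation*}
By the very definition of ${\rm BV}(X,\di,\mm)$ and of $|Df|(X)$ in equation~\eqref{eq:defTV}, this both proves $f\in{\rm BV}(X,\di,\mm)$ and yields $|Df|(X)\le \liminf_{k\to\infty}|Df_{k}|(X)$, as desired.

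The argument involves no genuine obstacle: the only delicate point is that the hypothesis does not a priori guarantee $f\in{\rm BV}$, which is the reason for invoking the definition of ${\rm BV}$ through the diagonal approximation rather than using the lower semicontinuity \eqref{eq:lscbv} directly (which is stated for $f,f_{n}\in{\rm BV}$). Noteworthy is the fact that the $\mathsf{RCD}(K,\infty)$ hypothesis and the superlinear isoperimetric profile are not needed here: they are used only for the compactness statement (Proposition~\ref{prop:compBV}) to produce a candidate limit $f$, whereas stability is a structural consequence of the definition of the total variation.
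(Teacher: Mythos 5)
Your proof is correct, and it takes a genuinely different --- and more elementary --- route than the paper's. You first upgrade the $L^{2}$ convergence of the signed square roots to $L^{1}$ convergence of the $f_{k}$ themselves via the pointwise bound $\bigl|t|t|-s|s|\bigr|\le(|t|+|s|)|t-s|$ and Cauchy--Schwarz (which also gives $f\in L^{1}(X,\mm)$), and then conclude by a diagonal extraction from the relaxed definition \eqref{eq:defTV}; this simultaneously produces $f\in{\rm BV}(X,\di,\mm)$ and the lower semicontinuity, and it correctly bypasses the restriction in \eqref{eq:lscbv} that the limit be known to lie in ${\rm BV}$ in advance. The paper instead truncates, runs the heat flow for a time $t>0$, invokes the $1$-Bakry--\'Emery contraction \eqref{dis: 1-BE for BV} together with the representation $|DH_{t}f|(X)=\int_{X}|\nabla H_{t}f|\,\di\mm$ for bounded Lipschitz functions, applies the weak lower semicontinuity of Lemma \ref{lemma:weak} with $g\equiv 1$, and finally lets $t\downarrow 0$. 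That machinery is inherited from the varying-ambient-space setting of Ambrosio--Honda, where a diagonal argument through Lipschitz approximations is unavailable because the approximating functions live on different spaces; in the fixed-space setting of this proposition your observation is accurate that the statement is a structural consequence of the relaxed definition of the total variation and requires no $\mathsf{RCD}$ (or isoperimetric) hypothesis. The only step worth spelling out is the standard refinement that, for each $k$, one first passes to a subsequence of the approximating sequence along which the $\liminf$ in \eqref{eq:defTV} is attained as a limit within $1/k$ of $|Df_{k}|(X)$, so that the single element $f_{k}^{(j(k))}$ can be chosen; this is harmless.
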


\begin{proof}
The proof is strongly inspired by \cite[Theorem 1.6.4]{AH17}, we report it for reader's convenience.

\textbf{Step 1.} In the first step we reduce to the case of uniformly bounded functions. To this aim we observe that, for any $N\in\mathbb{N}$ the truncated functions $f_k^N:=N\wedge f_k\vee -N$ and $f^N:=N\wedge f\vee -N$ verify the assumptions of the statement. Moreover, for any $g\in L^1(X,\mm)$ it holds that $g^N\to g$ in $L^1$ as $N\to\infty$.
Then, if we are able to prove that
\begin{equation*}
|Df^N|(X)\le\liminf_{k\to\infty}|Df^N_k|(X),
\end{equation*}	
the general conclusion will follow by lower semicontinuity of the variation with respect to $L^1$ convergence (see \eqref{eq:lscbv}) recalling that $|Dg^N|(X)\le |Dg|(X)$ for any $N\in\mathbb{N}$ and any $g\in {\rm BV}(X,\di,\mm)$. 

\textbf{Step 2.} 
Let us fix now $t>0$ and observe that the functions $H_tf_k$ are uniformly bounded, uniformly Lipschitz, they belong to $W^{1,2}(X,\di,\mm)$ and they still verify the assumptions of the statement. If we are able to prove that
\begin{equation*}
|DH_tf|(X)\le\liminf_{k\to\infty}|DH_tf_k|(X),
\end{equation*}	
then the conclusion will follow by the $1$-Bakry-\'Emery inequality, yielding $|DH_tf_k|(X)\le e^{-Kt}|Df_k|(X)$, and the lower semicontinuity of the total variation w.r.t. $L^1$ convergence again, passing to the $\liminf$ as $t\downarrow 0$.

Recalling the representation formula for the total variation \cite[Proposition 1.6.3 (a)]{AH17}  
\begin{equation*}
|Df|(X)=\int_X|\nabla f|\di\mm
\end{equation*}
for any $f\in{\rm Lip}_{\rm b}(X)\cap L^1(X,\mm)\cap W^{1,2}(X,\di,\mm)$, it remains to prove that
\begin{equation}\label{eq:int2}
\int_X|\nabla f|\di\mm\le\liminf_{k\to\infty}\int_X|\nabla f_k|\di\mm,
\end{equation} 
whenever $(f_k)\subset W^{1,2}(X,\di,\mm)$ are uniformly bounded in $W^{1,2}(X,\di,\mm)$ and converge to $f$ in $L^2(X,\mm)$.

\textbf{Step 3.} To conclude we just point out that \eqref{eq:int2} above follows from Lemma \ref{lemma:weak} choosing $g\equiv 1$.
\end{proof}

Let us state and prove a general existence result for optimizers of the variational problem defining the Cheeger constant \eqref{eq:defChConst}.

\begin{proposition}\label{prop:existenceofCheegersets}
Let $(X,\di,\mm)$ be an $\mathsf{RCD}(K,\infty)$ metric measure space with $\mm(X)=1$ and admitting a superlinear isoperimetric profile (this is satisfied in case $K>0$ or ${\rm diam}(X)<\infty$). Then there exists a Borel set $E\subset X$ with finite perimeter and $0<\mm(E)\le 1/2$ which is an optimizer for the variational problem defining the Cheeger constant \eqref{eq:defChConst}, i.e.
\begin{equation}
\frac{\Per(E)}{\mm(E)}=h(X).
\end{equation}	
\end{proposition}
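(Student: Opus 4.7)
The plan is the direct method of the calculus of variations: take a minimizing sequence $(E_k)_{k \in \mathbb{N}}$ of Borel sets with $0 < \mm(E_k) \le 1/2$ and
\[
\frac{\Per(E_k)}{\mm(E_k)} \longrightarrow h(X),
\]
extract a subsequential limit via the compactness and stability results for ${\rm BV}$ stated in Propositions \ref{prop:compBV} and \ref{prop:stabBV}, and verify that the limit is a valid (non-trivial) competitor. First I would note that $h(X) \in [0,\infty)$ by Corollary \ref{cor:h(X)finite}, hence along the minimizing sequence the perimeters are uniformly bounded by roughly $(h(X)+1)/2$ and the $L^1$ norms of $\chi_{E_k}$ are bounded by $1/2$, so the hypotheses of the ${\rm BV}$ compactness result are met.

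The second step is to show that the measures $\mm(E_k)$ stay uniformly bounded away from $0$; this is where the superlinear isoperimetric profile is essential. Using the contrapositive of \eqref{eq:ExIsopProf}, if $\Per(E_k)/\mm(E_k) < 1/\varepsilon$ then necessarily $\mm(E_k) > \omega(\varepsilon)$. Choosing $\varepsilon := 1/(h(X)+2)$ and $k$ large enough that $\Per(E_k)/\mm(E_k) \le h(X)+1 < 1/\varepsilon$, I obtain $\mm(E_k) > \omega(\varepsilon) > 0$ uniformly.

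Next I apply Proposition \ref{prop:compBV} to extract a (not relabelled) subsequence such that $\chi_{E_k} = \mathrm{sign}(\chi_{E_k}) \sqrt{|\chi_{E_k}|}$ converges in $L^2(X,\mm)$ to some $\mathrm{sign}(f) \sqrt{|f|}$ with $f \in {\rm BV}(X,\di,\mm)$. A further subsequence converges $\mm$-a.e., and since the values are in $\{0,1\}$ the pointwise limit is itself $\{0,1\}$-valued and nonnegative, so it must equal $\chi_E$ for some Borel set $E$ (and $f = \chi_E$). Proposition \ref{prop:stabBV} then gives $\Per(E) \le \liminf_{k \to \infty} \Per(E_k)$, while the $L^2$ convergence yields $\mm(E) = \lim_k \mm(E_k) \in [\omega(\varepsilon), 1/2]$, in particular $\mm(E) > 0$ and $E$ is an admissible competitor.

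Combining these facts,
\[
\frac{\Per(E)}{\mm(E)} \le \liminf_{k\to\infty} \frac{\Per(E_k)}{\mm(E_k)} = h(X),
\]
which together with the inequality $\ge$ coming from the definition of $h(X)$ gives equality. The only genuine subtlety is preventing mass concentration/escape: the superlinear profile rules out $\mm(E_k) \to 0$, while $\mm(E_k) \le 1/2 < \mm(X) = 1$ together with $L^2$ (hence $L^1$) convergence rules out $\mm(E_k) \to 1$; I do not need to invoke any concentration-compactness machinery because the space has unit mass and the compactness in Proposition \ref{prop:compBV} is global.
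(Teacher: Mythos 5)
Your proposal is correct and follows essentially the same route as the paper's proof: the direct method with the superlinear isoperimetric profile ruling out vanishing mass, compactness from Proposition \ref{prop:compBV}, and lower semicontinuity of the perimeter from Proposition \ref{prop:stabBV}. The only cosmetic differences are that you make the lower mass bound quantitative via the contrapositive of \eqref{eq:ExIsopProf} where the paper argues by contradiction, and you identify the limit as an indicator function through $\mm$-a.e.\ convergence of a subsequence where the paper passes the identity $f_n(1-f_n)=0$ to the limit in $L^1$; both variants are sound.
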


\begin{proof}
First of all we note that $h(X)\in [0,\infty)$ in virtue of Corollary \ref{cor:h(X)finite}. 
By the very definition of the Cheeger constant $h(X)$ we can find a sequence of Borel sets of finite perimeter $E_n\subset X$ such that $\mm(E_n)\le 1/2$ for any $n\in\mathbb{N}$ and 
\begin{equation}\label{eq:chopt}
\lim_{n\to\infty}\frac{\Per(E_n)}{\mm(E_n)}=h(X).
\end{equation}	

Let us set $f_n:=\chi_{E_n}$, where we denote by $\chi_F:X\to\{0,1\}$ the indicator function of an arbitrary Borel set $F\subset X$. We claim that $\mm(E_n)$ are bounded away from $0$. If this is not the case, i.e. $\liminf_{n\to\infty}\mm(E_n)=0$, from the existence of a superlinear isoperimetric profile \eqref{eq:ExIsopProf} for $(X, \di, \mm)$ we infer that 
\begin{equation*}
\liminf_{n\to\infty}\frac{\Per(E_n)}{\mm(E_n)}=\infty\, ,
\end{equation*}
contradicting \eqref{eq:chopt}.

It follows from \eqref{eq:chopt} that the functions $f_n$ have uniformly bounded $\mathrm{BV}$-norms, since $|Df_n|(X)=\Per(E_n)$ and $||f_n||_{L^1}=\mm(E_n)$. Thus by Proposition \ref{prop:compBV} there exist a function $f\in \mathrm{BV}(X,\di,\mm)$ and a subsequence, that we do not relabel, such that ${\rm sign}(f_n)\sqrt{|f_n|}\to {\rm sign}(f)\sqrt{|f|}$ in $L^2(X,\mm)$. In particular, 
\begin{equation}\label{eq:cnorms}
||f||_{L^1}=\lim_{n\to\infty}\mm(E_n).
\end{equation}
We claim that there exists a Borel set $E\subset X$ such that $f=\chi_E$,  $\mm$-a.e.. In order to prove the claim it is sufficient to verify that 
\begin{equation}\label{eq:f1-f=0}
f(1-f)=0, \quad \mm\text{-a.e.} . 
\end{equation}
To this aim we observe that $f_n(1-f_n)=0$, $\mm$-a.e. for any $n\in\mathbb{N}$, since $f_n$ are indicator functions. Moreover, using that $f_n\to f$ in $L^1(X,\mm)$ and $\|f_n\|_{L^\infty(X,\mm)}=1$, it is easily seen that $f_n(1-f_n)\to f(1-f)$ in $L^1(X,\mm)$,  proving the claim  \eqref{eq:f1-f=0}. 

Combining \eqref{eq:cnorms} with  \eqref{eq:f1-f=0}  we obtain that
\begin{equation}\label{eq:cmeas}
\mm(E)=\lim_{n\to\infty}\mm(E_n).
\end{equation}
In particular $0<\mm(E)\le 1/2$.
\\By Proposition \ref{prop:stabBV} we also infer that  
\begin{equation}\label{eq:scper}
\Per(E)\le\liminf_{n\to\infty}\Per(E_n).
\end{equation} 
The combination of \eqref{eq:chopt}, \eqref{eq:cmeas} and \eqref{eq:scper} yields that
\begin{equation*}
h(X)\le \frac{\Per(E)}{\mm(E)}\le \liminf_{n\to\infty}\frac{\Per(E_n)}{\mm(E_n)}=h(X)
\end{equation*}
 and we obtain that $E$ is an optimizer for the variational problem defining the Cheeger constant.
\end{proof}

\begin{remark}
Proposition \ref{prop:existenceofCheegersets} should be compared with \cite[Lemma 3.4]{Buser} where the author shows that for any compact Riemannian manifold $M$ there exist a $v>0$ and a sequence of open submanifolds $V_k\subset M$ such that $\mathrm{vol}(V_k)=v$ and $h(M)=\lim_{k\rightarrow \infty} \Per(V_k)/\mathrm{vol}(V_k)$. This is used to apply the deep regularity theory for minimizing currents and then argue via a Heintze-Karcher type estimate. One of the advantages of the present approach is that it does not rely on the regularity theory for minimizing currents.
\end{remark}

The next result will be useful later in the proof of \autoref{thm:rigcheeginfinite}.
\begin{lemma}\label{lem:fW12f2BV}
Let $(X,\di,\mm)$ be a complete and separable metric measure space with $\mm$ finite on bounded sets, and let $f\in W^{1,2}(X,\di,\mm)$.
Then $ f^{2}\in {\rm BV}(X,\di,\mm)$ and
\begin{equation}\label{eq:f2BV}
|D(f^2)|\leq 2\, |f| \, |\nabla f|\, \mm \quad \text{as measures.}
\end{equation}
 \end{lemma}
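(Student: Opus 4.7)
My plan is to approximate $f$ by Lipschitz functions with strong $L^2$-control on both the functions and their slopes, and then pass to the limit in the identity $\mathrm{lip}(g^2)=2|g|\mathrm{lip}(g)$, valid pointwise for any Lipschitz $g$. Since $f\in W^{1,2}(X,\di,\mm)\subset L^2(X,\mm)$, one has $f^2\in L^1(X,\mm)$, and Cauchy-Schwarz gives $|f||\nabla f|\in L^1(X,\mm)$ together with the bound $\int|f||\nabla f|\,\di\mm\le\|f\|_{L^2}\|\nabla f\|_{L^2}$.

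The crucial preliminary step is to select $(f_n)\subset\mathsf{Lip}_{bs}(X)$ such that both $f_n\to f$ and $\mathrm{lip}(f_n)\to|\nabla f|$ strongly in $L^2(X,\mm)$. Starting from a recovery sequence for $\mathsf{Ch}_{\mm,2}(f)$, so that $\int\mathrm{lip}(f_n)^2\,\di\mm\to\int|\nabla f|^2\,\di\mm$, a subsequence of the slopes converges weakly in $L^2$; Mazur's lemma combined with the lower semicontinuity of the Cheeger energy forces the weak limit to be $|\nabla f|$, and norm-plus-weak convergence upgrades to strong convergence by uniform convexity of $L^2$. This strong $L^2$-convergence of Lipschitz slopes is a classical feature of the Cheeger energy/AGS framework.

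Given such a sequence, each $f_n^2\in\mathsf{Lip}_{bs}(X)$ verifies $\mathrm{lip}(f_n^2)=2|f_n|\mathrm{lip}(f_n)$ pointwise, together with
\begin{equation*}
\|f_n^2-f^2\|_{L^1}\le\|f_n-f\|_{L^2}\|f_n+f\|_{L^2}\to 0\quad\text{and}\quad |f_n|\mathrm{lip}(f_n)\to|f||\nabla f|\text{ in }L^1,
\end{equation*}
the second being the usual product rule for strongly $L^2$-convergent sequences. For every open $A\subset X$, the definition of total variation then yields
\begin{equation*}
|D(f^2)|(A)\le\liminf_n\int_A\mathrm{lip}(f_n^2)\,\di\mm=\liminf_n\int_A 2|f_n|\mathrm{lip}(f_n)\,\di\mm=\int_A 2|f||\nabla f|\,\di\mm.
\end{equation*}
Both sides are restrictions to open sets of finite Borel measures, so this extends to the measure inequality $|D(f^2)|\le 2|f||\nabla f|\mm$; in particular $|D(f^2)|(X)<\infty$, hence $f^2\in{\rm BV}(X,\di,\mm)$.

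The main technical subtlety is the strong $L^2$-convergence of the slopes along the Lipschitz recovery sequence: without it, weak convergence of $\mathrm{lip}(f_n)$ alone might prevent the $L^1$-convergence of the product $|f_n|\mathrm{lip}(f_n)$ needed to identify the limit integrand in the estimate above. Everything else is a direct passage to the limit from the pointwise Lipschitz chain rule for squares.
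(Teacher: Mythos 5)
Your argument is correct, but it follows a genuinely different route from the one in the paper. The paper's proof is ``Lagrangian'': it invokes the characterization of $W^{1,2}$ functions as functions that are Sobolev along $2$-a.e.\ curve \cite[Proposition 5.7]{AGS}, deduces from the one-dimensional chain rule along curves that $f^2$ is of bounded variation in the ``weak'' (test-plan) formulation of \cite{AD} with $|D(f^2)|_w\le 2|f||\nabla f|\mm$, and then transfers this to the total variation defined by Lipschitz relaxation via the identification theorem \cite[Theorem 1.1]{AD}. Your proof is instead entirely ``Eulerian'' and stays within the relaxation definitions \eqref{eq:defChm} and \eqref{eq:defTV}: its one nontrivial input is the strong approximation result of \cite{AGS} producing $f_n\in\mathsf{Lip}_{bs}(X)$ with $f_n\to f$ and $\mathrm{lip}(f_n)\to|\nabla f|$ strongly in $L^2(X,\mm)$ (a subsequence of a recovery sequence works: any weak $L^2$ limit of the slopes is a relaxed gradient, hence dominates the minimal one, and the norm convergence upgrades weak to strong convergence by the Radon--Riesz property, as you indicate). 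Granting that, the pointwise identity $\mathrm{lip}(g^2)=2|g|\,\mathrm{lip}(g)$ for continuous Lipschitz $g$, the $L^1$-convergences $f_n^2\to f^2$ and $|f_n|\mathrm{lip}(f_n)\to|f||\nabla f|$, and the definition \eqref{eq:defTV} give $|D(f^2)|(A)\le\int_A 2|f||\nabla f|\,\di\mm$ for every open $A$, and outer regularity of the two finite Borel measures extends this to all Borel sets; the finiteness of $\int_X 2|f||\nabla f|\,\di\mm$ also certifies $f^2\in{\rm BV}(X,\di,\mm)$. Both proofs ultimately lean on a deep result from the AGS/AD theory --- yours on the strong $L^2$ approximation of the minimal weak upper gradient by slopes, the paper's on the equivalence of the weak and relaxed notions of ${\rm BV}$ --- but yours has the advantage of never leaving the definitions actually stated in the paper, while the paper's is shorter once the curve-wise machinery is taken for granted. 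You correctly isolate the one point where the argument could break: with only weak convergence of the slopes the product $|f_n|\mathrm{lip}(f_n)$ need not converge in $L^1$ to $|f||\nabla f|$, so the strong convergence is essential.
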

 
 \begin{proof}
For brevity, we refer to \cite{AGS,AD} for the notions of $p$-test plans and $p$-a.e. curve, $p=1,2$.
From \cite[Proposition 5.7]{AGS}, if $f\in W^{1,2}(X,\di,\mm)$ then $f$ is Sobolev along $2$-a.e. curve and
$$
\left| \frac{\di}{\di t} f\circ \gamma \right| \leq  |\nabla f| \circ \gamma \; |\dot \gamma|, \; \text{a.e. in }[0,1], \; 2\text{-a.e. } \gamma\in {\rm AC}((0,1); (X,\di)). 
$$ 
At this point, it is easy to check that $f^{2}$ is a ${\rm BV}$ function in the ``weak'' formulation of \cite[Definition 5.5]{AD} with ``weak'' total variation satisfying $|D(f^{2})|_{w}\leq 2 |f|\,  |\nabla f| \, \mm$ as measures. 
\\We conclude thanks to the identification result \cite[Theorem 1.1]{AD}. 
 \end{proof}

\subsection{Spectrum of the Laplacian}\label{subsec:spectrum}
We start by recalling some classical notions of spectral theory (see for instance \cite[Appendix A]{BGL}).

Let $(X,\di,\mm)$ be an $\mathsf{RCD}(K,\infty)$ metric measure space for some $K\in \R$ and denote by $\Delta$ the Laplacian defined in Section \ref{subsec:RCD}. We know that $-\Delta$ is a densely defined, self-adjoint operator on the Hilbert space $L^2(X,\mm).$
\\A number $\lambda\in \mathbb{C}$ is a \emph{regular value} of $-\Delta$ if $(\lambda \rm{Id}+\Delta)$ has a bounded inverse. The \emph{resolvent set} $\rho(-\Delta)$ is the set of regular values of $-\Delta$, while the \emph{spectrum} $\sigma(-\Delta)$ is defined as $\sigma(-\Delta):=\mathbb{C}\setminus \rho(-\Delta)$. Since $-\Delta$ is nonnegative, it holds $\sigma(-\Delta)\subset [0,\infty)$.
\\ If there exists a non-zero function $f\in D(\Delta)$ such that $-\Delta f=\lambda f$, we call $\lambda\in \sigma(-\Delta)$ an \emph{eigenvalue} and $f$ the associated \emph{eigenfunction}. The set of all eigenvalues forms the so-called \emph{point spectrum}.
\\ The \emph{discrete spectrum} $\sigma_d(-\Delta)$ is the set of all eigenvalues that are isolated in the point spectrum with the corresponding eigenspace which is  finite dimensional. The \emph{essential spectrum} is the closed set defined as $\sigma_{ess}(-\Delta):=\sigma(-\Delta)\setminus \sigma_{d}(-\Delta)$. 
We denote by $\Sigma$ the infimum of the essential spectrum of $-\Delta$, i.e.
$$\Sigma:=\inf \sigma_{ess}(-\Delta) \ \ \textrm{and}\ \ \ \Sigma:=+\infty \ \textrm{if} \ \sigma_{ess}(-\Delta)=\emptyset.$$

The space $W^{1,p}(X,\di,\mm)$ is \emph{compactly embedded} in $L^p(X,\mm)$, $1<p<\infty$, if for any sequence $\{f_k\}\subset W^{1,p}(X,\di,\mm)$ with uniformly bounded $W^{1,p}$-norm we can extract a subsequence $f_{k_j}$ strongly converging in $L^p(X,\mm)$.

In the next theorem, we collect some results about the spectrum of $\mathsf{RCD}(K,\infty)$ spaces which will be useful later on. For the compact embedding $W^{1,2}(X,\di,\mm) \subset \subset L^2(X,\mm)$ in $\mathsf{RCD}(K,\infty)$ spaces under different assumptions see  \cite[Proposition 6.7]{GMS}.
 
\begin{theorem}\label{thm:compembedding}
Let $(X,\di,\mm)$ be an $\mathsf{RCD}(K,\infty)$ metric measure space for some $K\in\R$. 

\vspace{0.3cm}

\textbf{Case $\mm(X)<\infty$}. Assume that $(X,\di,\mm)$  admits a superlinear isoperimetric profile and that $\mm(X)<\infty$. 
Then:
\begin{enumerate}
\item  The embedding of $W^{1,p}(X,\di,\mm)$ in $L^p(X,\mm)$ is compact for any $1<p<\infty$. 
\item The heat semigroup $H_{t}:L^{2}(X,\mm) \to L^{2}(X,\mm)$ is a compact operator, for any $t>0$.
\item  The spectrum of $-\Delta$ is discrete, non-negative and it diverges to $+\infty$, i.e. $\sigma(-\Delta)=\sigma_d(-\Delta)=(\lambda_{k})_{k\in \mathbb N \cup \{0\}} \subset [0,\infty)$ with $\lambda_{k}\to \infty$ as $k\to \infty$.   Moreover:
\begin{equation}\label{eq:lambda1Variational}
\lambda_{0}=0<\lambda_{1}=\min\left \{ 2{\mathsf {Ch}}_{\mm}(f) :  f\in L^{2}(X,\mm), \, \|f\|_{2}=1,\, \int_{X} f\, \di \mm=0 \right\}.
\end{equation}
In particular, $\lambda_1$ coincides with the value introduced in \eqref{eq:defla1Intro}.
 
Moreover, $f\in W^{1,2}(X,\di,\mm)$ is a $\lambda_{0}$-eigenfunction if and only if $f$ is a non-zero constant function, while $f\in W^{1,2}(X,\di,\mm)$ is a (normalized) $\lambda_{1}$-eigenfunction if and only if
\begin{equation}\label{eq:eig1}
\int_X f^2 \, \di\mm=1,\,\,\,\int_X f \, \di\mm=0\,\,\text{and}\,\,\int_X|\nabla f|^2 \, \di\mm=\lambda_1.
\end{equation}
\end{enumerate}

\vspace{0.3cm}

\textbf{Case $\mm(X)=\infty$}. Assume $\mm(X)=\infty$ and  $\lambda_0:=\inf{\sigma_d(-\Delta)}<\Sigma$.
Then:
\begin{enumerate}
\item The eigenvalue $\lambda_0$ satisfies
$$0<\lambda_{0}=\min\{ 2{\mathsf {Ch}}_{\mm}(f) :  f\in L^{2}(X,\mm), \, \|f\|_{2}=1 \}.$$ 
In particular, $\lambda_0$ coincides with the value introduced in \eqref{eq:defla0Intro}.

Moreover, $f\in W^{1,2}(X,\di,\mm)$ is a (normalized) $\lambda_{0}$-eigenfunction if and only if
\begin{equation}\label{eq:eig0}
\int_X f^2 \, \di\mm=1\,\,\text{and}\,\,\int_X|\nabla f|^2 \, \di\mm=\lambda_0.
\end{equation}
\end{enumerate} 
\end{theorem}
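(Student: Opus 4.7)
The proof naturally splits according to whether $\mm(X)$ is finite or not, with the heart of the argument being the compactness of the embedding $W^{1,p}\hookrightarrow L^p$ in the finite-measure case (which is essentially already contained in Proposition \ref{prop:compBV}).

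\textbf{Finite measure case.} For (1), I would simply invoke the second half of Proposition \ref{prop:compBV}: any $W^{1,p}$-bounded sequence has an $L^p$-convergent subsequence, which is precisely compactness of the embedding. For (2), I would show $H_{t}$ factors as $L^{2}\xrightarrow{H_{t}} W^{1,2}\hookrightarrow L^{2}$, the second arrow being compact by (1). The first arrow is bounded thanks to the standard energy identity $\frac{d}{ds}\|H_{s}f\|_{2}^{2}=-4\mathsf{Ch}_{\mm,2}(H_{s}f)$ and the monotonicity of $s\mapsto \mathsf{Ch}_{\mm,2}(H_{s}f)$, which together yield $\mathsf{Ch}_{\mm,2}(H_{t}f)\le \frac{1}{4t}\|f\|_{2}^{2}$. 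For (3), the spectral theorem for compact, self-adjoint, non-negative operators applied to $H_{t}$ gives a discrete spectrum $\{e^{-t\lambda_{k}}\}$ accumulating only at $0$; by spectral calculus (or by diagonalizing $H_{t}$ over a common basis of eigenfunctions and using the fact that $f\in D(\Delta)$ is equivalent to $\sum \lambda_{k}^{2}|\hat{f}(k)|^{2}<\infty$) this transfers to $\sigma(-\Delta)=\{\lambda_{k}\}$ with $\lambda_{k}\to\infty$.

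\textbf{Variational characterizations in the finite-measure case.} Since $f\equiv \mm(X)^{-1/2}$ is a normalized eigenfunction with eigenvalue zero, and since on a connected space only constants satisfy $\mathsf{Ch}_{\mm,2}(f)=0$ (hence $\lambda_{1}>0$), I obtain $\lambda_{0}=0$. For the min-max formula for $\lambda_{1}$, I would take a minimizing sequence $(f_{n})$ for the right-hand side of \eqref{eq:lambda1Variational}; by (1) there is an $L^{2}$-limit $f$ with $\|f\|_{2}=1$ and $\int f\,\di\mm=0$, and by the lower semicontinuity of $\mathsf{Ch}_{\mm,2}$ the function $f$ achieves the infimum. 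The Euler-Lagrange equation then gives $-\Delta f=\lambda f$ with $\lambda=2\mathsf{Ch}_{\mm,2}(f)$; since the corresponding eigenvalue cannot be $\lambda_{0}=0$ (as $\int f\,\di\mm=0$ rules out constants), this is the smallest non-zero eigenvalue, giving the identification with the spectral $\lambda_{1}$. The equivalence with \eqref{eq:defla1Intro} follows from density of $\mathsf{Lip}_{bs}$ in $W^{1,2}$ together with the identity $\mathsf{Ch}_{\mm,2}(f)=\frac{1}{2}\int |\nabla f|^{2}\di\mm$; the characterization \eqref{eq:eig1} is immediate once the minimization is reduced to the eigenvalue problem.

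\textbf{Infinite-measure case.} The hypothesis $\lambda_{0}:=\inf\sigma_{d}(-\Delta)<\Sigma$ means $\lambda_{0}$ is an isolated eigenvalue with finite-dimensional eigenspace, so a non-zero eigenfunction $f\in D(\Delta)\cap L^{2}$ exists and, after normalization, satisfies $2\mathsf{Ch}_{\mm,2}(f)=\langle -\Delta f,f\rangle=\lambda_{0}$. The inequality $\lambda_{0}\le \inf\{2\mathsf{Ch}_{\mm,2}(g):\|g\|_{2}=1\}$ is the Rayleigh quotient estimate (using that $\langle -\Delta g,g\rangle\ge \lambda_{0}$ for $g$ in the form domain), so $f$ realizes the minimum, giving \eqref{eq:eig0}. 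The strict positivity $\lambda_{0}>0$ comes from the fact that a zero eigenfunction would satisfy $\mathsf{Ch}_{\mm,2}(f)=0$, forcing $f$ to be locally constant, hence constant on each connected component; but no non-zero constant belongs to $L^{2}(X,\mm)$ when $\mm(X)=\infty$. The equivalence with \eqref{eq:defla0Intro} again uses density of $\mathsf{Lip}_{bs}$.

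\textbf{Main obstacle.} The only real subtlety I anticipate is in the infinite-measure case, where compactness tools are unavailable and one has to work directly with the abstract spectral hypothesis $\lambda_{0}<\Sigma$ to extract a minimizer; this requires a careful use of the Rayleigh characterization and the fact that eigenvalues below the essential spectrum are variationally characterized. In the finite-measure case, everything reduces to the compactness coming from Proposition \ref{prop:compBV}, which in turn requires the superlinear isoperimetric profile hypothesis exactly to prevent concentration/escape of mass along minimizing sequences.
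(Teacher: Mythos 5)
Your proposal is correct and follows essentially the same route as the paper: compactness of the embedding from Proposition \ref{prop:compBV}, compactness of $H_t$ by factoring through $W^{1,2}$, the spectral theorem for compact self-adjoint operators, the Sobolev-to-Lipschitz property to identify the kernel of the Cheeger energy with constants (and to get $\lambda_0>0$ when $\mm(X)=\infty$), and the variational/Rayleigh characterization of $\lambda_1$ (resp. $\lambda_0$) together with a subdifferential (Euler--Lagrange) argument for the eigenfunction characterization. The only difference is cosmetic: where you re-derive the energy estimate $\mathsf{Ch}_{\mm,2}(H_tf)\le \tfrac{1}{4t}\|f\|_2^2$ and the min-max formula by hand, the paper cites the standard references (Bakry--Gentil--Ledoux and Davies) for these facts.
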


\begin{proof}

\textbf{Case $\mm(X)<\infty$.}
Since $X$ admits a superlinear isoperimetric profile, the fact that the embedding of $W^{1,p}(X,\di,\mm)$ in $L^p(X,\mm)$ is compact is a direct consequence of Proposition \ref{prop:compBV}. 
We can thus appeal to the standard spectral theory (see e.g. \cite[Theorem A.6.4]{BGL}) to infer that $H_t$ is a compact operator for every $t>0$ and that $\sigma(-\Delta)$ consists of a sequence of isolated eigenvalues $(\lambda_{k})_{k\in \mathbb N}\subset [0,\infty)$ with $\lambda_{k}\to \infty$ as $k\to \infty$ and finite dimensional associated eigenspaces. 

We can also appeal to the variational characterisation of the eigenvalues (see \cite[Theorem 4.5.1]{Dav} to infer that
\begin{equation}\label{eq:VarCharEV}
\lambda_{k}=\min_{S_{k+1}} \,\max_{f\in S_{k+1}, \|f\|_{2}=1} {2\mathsf{Ch}}_{\mm}(f),
\end{equation} 
where $S_{k}\subset W^{1,2}(X,\di, \mm)$ denotes an arbitrary $k$-dimensional subspace.

Since $f\equiv 1$ is an element of $L^{2}(X,\mm)$, we infer that $\lambda_0=0$. Moreover, from the Sobolev-to-Lipschitz property satisfied by  $\mathsf{RCD}(K,\infty)$ spaces \cite{AGS1}, it holds that $\mathsf{Ch}_{\mm,2}(f)=0$ if and only if $f$ is constant $\mm$-a.e.. 
Thus $\lambda_{0}=0$ if and only if $\mm(X)<\infty$ and,  in this case, $f$ is a $\lambda_{0}$-eigenfunction if and only if $f$ is constant $\mm$-a.e..

Specialising  \eqref{eq:VarCharEV} to $k=1$ in case $\mm(X)<\infty$ gives the claimed  variational formula for $\lambda_{1}$ which trivially coincides with \eqref{eq:defla1Intro} by the very definition of Cheeger energy.

We finally prove the implication ``if $f$ satisfies  \eqref{eq:eig1} then $f$ is a $\lambda_{1}$-eigenfunction''.    
We first show that $\lambda_1 f\in \partial\mathsf{Ch}_{\mm,2}(f)$:  for every $g\in L^2(X,\mm)$ with $\int_{X} g \, \di \mm=0$ we have
$$\mathsf{Ch}_{\mm,2}(g)\geq \frac{\lambda_1}{2}\int_X g^2\,\di\mm\geq \lambda_1\int_X fg\,\di\mm-\frac{\lambda_1}{2}\int_X f^2\di\mm=\mathsf{Ch}_{\mm,2}(f)+\int_X \lambda_1f(g-f)\di\mm $$
as a consequence of the variational characterization of $\lambda_{1}$, a trivial inequality and \eqref{eq:eig1}.
The function $\lambda_1f$ is also the element of minimal norm in the set $\partial\mathsf{Ch}_{\mm,2}(f)$, and thus $-\Delta f=\lambda_1f$, since 
$$\| \Delta f\|_{2}\ge \int_X -f\Delta f\,\di\mm=\int_X |\nabla f|^2\,\di\mm=\lambda_1=\| \lambda_1 f\|_{L^2}\, ,$$
where we have used again \eqref{eq:eig1} and the Cauchy-Schwarz inequality.

\vspace{0.3cm}
\textbf{Case $\mm(X)=\infty$.}
The assumption on the spectrum implies the existence of the first eigenvalue $\lambda_0$ of $-\Delta$ which is isolated and with non-empty finite dimensional eigenspace. Using the Sobolev-to-Lipschitz property satisfied by  $\mathsf{RCD}(K,\infty)$ spaces \cite{AGS1} and observing that the constant functions are not in $L^2(X,\mm)$, we have $\lambda_0>0$. 
The variational characterization of $\lambda_{0}$ still holds (see in this case \cite[Theorem 4.5.2]{Dav}) and gives
$$\lambda_0=\min_{\|f\|_{2}=1}\Big\{2\mathsf{Ch}_{\mm,2}(f)\Big\},$$
so that $\lambda_0$ trivially corresponds to the definition given in \eqref{eq:defla0Intro} by the very definition of Cheeger energy. It remains to prove that a function $f$ such that
$$\int_X f^2\di\mm=1 \ \ \textrm{and} \ \ \int_X |\nabla f|^2\di\mm=\lambda_0$$
is a $\lambda_0$-eigenfunction. 
We first show that $\lambda_0 f\in \partial\mathsf{Ch}_{\mm,2}(f)$:  for every $g\in L^2(X,\mm)$ we have
$$\mathsf{Ch}_{\mm,2}(g)\geq \frac{\lambda_0}{2}\int_X g^2\,\di\mm\geq \lambda_0\int_X fg\,\di\mm-\frac{\lambda_0}{2}\int_X f^2\di\mm=\mathsf{Ch}_{\mm,2}(f)+\int_X \lambda_0f(g-f)\di\mm $$
as a consequence of the variational characterization of $\lambda_{0}$, a trivial inequality and \eqref{eq:eig0}.
The function $\lambda_0f$ is also the element of minimal norm in the set $\partial\mathsf{Ch}_{\mm,2}(f)$, and thus $-\Delta f=\lambda_0f$, since 
$$\| \Delta f\|_{2}\ge \int_X -f\Delta f\,\di\mm=\int_X |\nabla f|^2\,\di\mm=\lambda_0=\| \lambda_0 f\|_{L^2}\, ,$$
where we have used again \eqref{eq:eig1} and the Cauchy-Schwarz inequality.
\end{proof}

\section{Proof of \autoref{thm:Main}}               
\begin{proof}[Proof of \autoref{thm:Main}]
By the scaling property of the $\mathsf{RCD}(K,\infty)$ condition, it is enough to show the result in the case $K=1$.  

Along the proof of \autoref{thm:MainDeMo} in \cite{DeMo} (see in particular \cite[Eq. (49), (50), (51)]{DeMo}), it is shown that for all $t>0$ and for all $A\subset X$ Borel it holds
\begin{multline}\label{in: last part DeMo}
J_1(t)\mathrm{Per}(A)\geq 2\Big(\mm(A)-\mm(A)^2-\left\Vert H_{t/2}(\chi_A-\mm(A))\right\Vert^2_{2}\Big)\\
\geq 2\Big(\mm(A)-\mm(A)^2-e^{-\lambda_1 t}\left\Vert \chi_A-\mm(A)\right\Vert^2_{2}\Big)
=2\mm(A)(1-\mm(A))(1-e^{-\lambda_1t}).
\end{multline}
The inequality \eqref{implicitlambda1Intro} then follows directly from \eqref{in: last part DeMo} by minimizing over all the Borel subsets $A$ with $\mm(A)\leq 1/2$.

By Proposition \ref{prop:existenceofCheegersets} and assumption \eqref{eq: equality Buser} there exists a Borel set $E\subset X$ with $0<\mm(E)\le 1/2$  such that
\begin{equation}\label{eq: Buser con cheeger raggiunta}
\frac{\Per(E)}{\mm(E)}=\sup_{t>0}\frac{1-e^{-\lambda_1t}}{J_1(t)}\, .
\end{equation}

Since $(1-e^{-\lambda_1t})/J_1(t)$ is a continuous function of $t\in (0,\infty)$, we have to analyse three different cases:
\begin{enumerate}
\item the supremum in the right hand side of \eqref{eq: Buser con cheeger raggiunta} is achieved as $t\to 0$;
\item the supremum in the right hand side of \eqref{eq: Buser con cheeger raggiunta} is achieved as $t\to \infty$;
\item the supremum in the right hand side of \eqref{eq: Buser con cheeger raggiunta} is achieved for a certain $\bar{t}\in (0,\infty).$
\end{enumerate}

\textbf{Case} (1): this case is easily ruled out since $(1-e^{-\lambda_1t})/J_1(t)$ is a positive function in $(0,\infty)$ and
$$\lim_{t \to 0} \frac{1-e^{-\lambda_1t}}{J_1(t)}=0.$$
In particular, the supremum can never occur as $t\to 0$.

\textbf{Case} (2): in this situation we have
$$\sup_{t>0}\frac{1-e^{-\lambda_1t}}{J_1(t)}=\frac{1}{\sqrt{\frac{\pi}{2}}}=\sqrt{\frac{2}{\pi}},$$
so that from \eqref{eq: Buser con cheeger raggiunta} it follows 
$$\Per(E) \leq\frac{1}{2}\sqrt{\frac{2}{\pi}}=\frac{1}{\sqrt{2\pi}}=I(1/2),$$
where $I$ is the Gaussian isoperimetric profile function. Since equality holds in the Buser inequality, equality holds in both the inequalities in \eqref{in: last part DeMo}. In particular $\mm(E)=1/2$.  Therefore $E$ achieves equality in the Gaussian isoperimetric inequality \eqref{eq:GII} and the conclusion is thus a consequence of Proposition \ref{prop: Han rigidity}.

\textbf{Case} (3): we claim that also this case can be ruled out. Indeed, if the supremum is attained for some $0<\bar{t}<\infty$, then by \eqref{in: last part DeMo} we get $\mm(E)=1/2$ and 
\begin{equation}
\left\Vert H_{\bar{t}/2}(\chi_E-\mm(E))\right\Vert^2_{2}=e^{-\lambda_1 \bar{t}}\left\Vert \chi_E-\mm(E)\right\Vert^2_{2}.
\end{equation}
Therefore $f:=\chi_E-\mm(E)$ attains the equality for $t=\bar{t}/2$ in the inequality
\begin{equation*}
\left\Vert H_t f\right\Vert_2\le e^{-\lambda_1t}\left\Vert f\right \Vert_2,
\end{equation*}
that has been proved in \cite[eq. (46)]{DeMo} through an application of Gronwall's lemma.
It follows that
\begin{equation*}
2\lambda_1\int_X|H_t f|^2\di\mm=2\int_X|\nabla H_t f|^2\di\mm=-\frac{\di}{\di t}\int_X|H_t f|^2\di\mm,
\end{equation*}
for a.e. $t\in(0,\bar{t})$ (cf. with \cite[eq. (47)]{DeMo}). Hence, by the classical characterization of the first eigenvalue of the Laplacian, $-\Delta H_tf=\lambda_1H_tf$ for a.e. $t\in(0,\bar{t})$. From this we infer by the heat equation that $H_tf=e^{-\lambda_1t}f$ for any $t\in(0,\infty)$. It follows by the regularizing properties of the heat semigroup that $f\in D(\Delta)$, in particular $f\in W^{1,2}(X,\di,\mm)$. 
We claim that this yields a contradiction. To do so it is sufficient to notice that $|\nabla f|= 0$ $\mm$-a.e. and to apply the local Poincaré's inequality \cite[Theorem 1]{Raj} to a recovery sequence of $\mathsf{Ch}_{\mm,2}(f)$ in order to obtain in the limit that for every $x\in X$ and every $r>0$
$$\int_{B(x,r)}|f-\langle f \rangle_{B(x,r)}|\,\di\mm=0.$$ By considering a ball $B(x,r)$ such that $\mm(B(x,r)\cap E)>0$ and $\mm(B(x,r)\cap E^c)>0$ we obtain that $f$ must be equal $\mm$-a.e. to its mean $\langle f \rangle_{B(x,r)}\in (-1/2,1/2)$ on $B(x,r)$, which contradicts the explicit expression of $f$. 

\end{proof}

\section{On the equality  case in Cheeger's inequality}

As pointed out in the introduction, Cheeger's inequality fits into a family of more general inequalities comparing eigenvalues of the $p$-Laplace operator for different exponents $p$.
To this regard, in the next proposition we extend to the general metric measure setting a recent result obtained in \cite{MR} in the context of Euclidean domains (see in particular Lemma 3.1 therein).\\ 
The statement and the argument appear to be well known to experts, we report them for the sake of completeness and since they will be relevant to investigate the rigidity later.
The proof is based on the strategy implemented in \cite{MR} for the case $p>1$, while in the case $p=1$ is based on \cite[Appendix A]{DeMo}.

\begin{proposition}\label{prop:mononeumann}
Let $(X,\di,\mm)$ be a metric measure space with $\mm(X)<\infty$. Then
\begin{equation}\label{eq:monNeu}
p\left(\lambda_{1,p}(X,\di,\mm)\right)^{\frac{1}{p}}\le q\left(\lambda_{1,q}(X,\di,\mm)\right)^{\frac{1}{q}}\,  \qquad\textrm{for every} \ \ 1\le p<q<\infty\, .
\end{equation}
\end{proposition}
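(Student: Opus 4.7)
My plan is to construct, starting from a near-minimizer $u$ of the $\lambda_{1,q}$ variational problem, an admissible test function for $\lambda_{1,p}$ through a nonlinear reparametrization $v=\mathrm{sgn}(u-c)|u-c|^{q/p}$, with the shift $c\in\R$ chosen so that the constraint $\int_X|v|^{p-2}v\,\di\mm=0$ is automatically satisfied; the case $p=1$ (where $\lambda_{1,1}=h(X)$) will be handled separately via a coarea-plus-Cheeger argument in the spirit of \cite[Appendix A]{DeMo}.

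For $1<p<q<\infty$, I would fix a minimizing sequence $(u_n)\subset\mathsf{Lip}_{bs}(X)$ with $\int_X|u_n|^q\,\di\mm=1$, $\int_X|u_n|^{q-2}u_n\,\di\mm=0$ and $L_n:=\int_X\mathrm{lip}(u_n)^q\,\di\mm\to\lambda_{1,q}$. Since $c\mapsto\int_X\mathrm{sgn}(u_n-c)|u_n-c|^{q(p-1)/p}\,\di\mm$ is continuous and has opposite signs at $c\to\pm\infty$, the intermediate value theorem yields $c_n\in\R$ at which this integral vanishes. Setting $v_n:=\mathrm{sgn}(u_n-c_n)|u_n-c_n|^{q/p}$, one immediately reads off $|v_n|^p=|u_n-c_n|^q$ and $\int_X|v_n|^{p-2}v_n\,\di\mm=0$, so $v_n\in W^{1,p}(X,\di,\mm)$ (bounded, Lipschitz, and $\mm(X)<\infty$) is admissible in the $W^{1,p}$ formulation \eqref{eq: eqdeflambda1p} of $\lambda_{1,p}$. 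The $C^1$ chain rule (applicable because $q/p>1$) gives $\mathrm{lip}(v_n)=(q/p)|u_n-c_n|^{q/p-1}\mathrm{lip}(u_n)$ a.e., whence H\"older's inequality with conjugate exponents $q/(q-p)$ and $q/p$ produces $\int_X\mathrm{lip}(v_n)^p\,\di\mm\le (q/p)^p\bigl(\int_X|u_n-c_n|^q\,\di\mm\bigr)^{(q-p)/q}L_n^{p/q}$. The crux is the matching lower bound $\int_X|u_n-c_n|^q\,\di\mm\ge \int_X|u_n|^q\,\di\mm=1$: the constraint $\int_X|u_n|^{q-2}u_n\,\di\mm=0$ is precisely the first-order optimality condition identifying $c=0$ as the unique minimizer of the strictly convex functional $c\mapsto\int_X|u_n-c|^q\,\di\mm$. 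Combining, $\lambda_{1,p}\le (q/p)^p L_n^{p/q}$; letting $n\to\infty$ and extracting $p$-th roots gives $p\lambda_{1,p}^{1/p}\le q\lambda_{1,q}^{1/q}$.

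For $p=1$ I would take $m_n$ to be a median of $u_n$ (so $\mm(\{u_n>m_n+s\})\le\mm(X)/2$ for every $s>0$), apply the BV coarea formula from Proposition \ref{prop:Coarea} and Cavalieri's principle to $(u_n-m_n)_+^q\in\mathrm{BV}(X,\di,\mm)$, then Cheeger's inequality on each level set and H\"older on the right-hand side to obtain $h(X)^q\int_X(u_n-m_n)_+^q\,\di\mm\le q^q\int_{\{u_n>m_n\}}\mathrm{lip}(u_n)^q\,\di\mm$; summing with the symmetric bound for $(m_n-u_n)_+^q$ and once more using $\int_X|u_n-m_n|^q\,\di\mm\ge 1$ yields $h(X)^q\le q^q L_n\to q^q\lambda_{1,q}$. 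The principal obstacle throughout is recognizing how the mean-zero-type constraint $\int_X|u_n|^{q-2}u_n\,\di\mm=0$ delivers the uniform lower bound $\int_X|u_n-c|^q\,\di\mm\ge 1$ for every $c\in\R$, without which the H\"older step would lose factors; a secondary subtlety is that $v_n$ and $(u_n-m_n)_\pm^q$ need not have bounded support when $c_n$ or $m_n$ is nonzero, which forces the use of the $W^{1,p}$- and BV-based reformulations rather than the $\mathsf{Lip}_{bs}$ version \eqref{eq:defla1Intro}.
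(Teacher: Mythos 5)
Your proposal is correct and follows essentially the same route as the paper's proof: for $p>1$ you use the same nonlinear reparametrization $\mathrm{sgn}(u-c)|u-c|^{q/p}$ with the shift $c$ found by a continuity/IVT argument so that the $p$-constraint holds, and the same key observation that the constraint $\int_X|u|^{q-2}u\,\di\mm=0$ makes $c=0$ the minimizer of $c\mapsto\int_X|u-c|^q\,\di\mm$ (the paper phrases this as the normalized curve $\gamma_q(t)$ having its gradient norm maximized at $t=0$), followed by the identical H\"older step. The $p=1$ case via medians, the coarea inequality, the definition of $h(X)$ on superlevel sets and H\"older is the paper's argument verbatim.
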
 

\begin{proof}
Let us consider separately the two cases $p>1$ and $p=1$.
\medskip  

\textbf{Case $p>1$.}
\\Fix $1<p<q$, $\varepsilon>0$ and choose $f\in\mathsf{Lip}_{bs}(X)$ such that 
$$\int_X |f|^q\, \di\mm=1\, , \quad \int_X |f|^{q-2}f\, \di\mm=0\, , \quad \int_X \mathrm{lip}(f)^q\, \di\mm\,\le \lambda_{1,q}+\varepsilon\, .$$
Let $S_q(X):=\{f\in L^q(X,\mm) : \|f\|_q=1\}$ endowed with the induced strong topology from $L^q(X,\mm)$ and define the continuous curve
$\gamma_q\in C(\mathbb{R}\,;\,S_q(X))$ as
$$\gamma_q(t):=\frac{f(\cdot)+t}{\|f(\cdot)+t\|_q}.$$
We also set $\gamma(t):=|\gamma_q(t)|^{(q-p)/p}\,\gamma_q(t)$, $t\in \mathbb{R}$.  Notice that $\gamma(t)\in S_p(X)$ for any $t\in\mathbb{R}$. Moreover, 
$$\lim_{t\to \pm \infty} \gamma(t)=\frac{\pm 1}{\mm(X)^{1/p}} ,$$
in the $L^p(X,\mm)$ sense. Hence by continuity one can easily infer the existence of $\tilde{t}\in \mathbb{R}$ such that 
$$\int_X |\gamma(\tilde{t})|^{p-2}\gamma(\tilde{t})\, \di\mm=0.$$

Using the trivial fact that for every $g\in \mathsf{Lip}_{bs}(X)$ and every $a,b\in \mathbb{R}$ it holds $\mathrm{lip}(ag+b)=|a|\mathrm{lip}(g)$, we have 
$$\| \mathrm{lip}(\gamma_q(t)) \|^q_q=\frac{\|\mathrm{lip}(f)\|_q^q}{\|f(\cdot)+t\|^q_q}, \quad t\in \mathbb{R}.$$
Since the function $t\mapsto \|f(\cdot)+t\|^q$ is minimized for $t=\int_X |f|^{q-2}f\, \di\mm=0$, we  deduce that the function $t\mapsto \| \mathrm{lip}(\gamma_q(t)) \|^q_q$ has a maximum at $t=0$ where it holds
$\| \mathrm{lip}(\gamma_q(0)) \|^q_q\le \lambda_{1,q}+\varepsilon.$
Thus, recalling the definition of $\lambda_{1,p}$ and using the H\"older's inequality we have:
\begin{align}\label{eq:almosteqeigen}
\nonumber\lambda_{1,p}&\le \max_{t\in\mathbb{R}}\| \mathrm{lip}(\gamma(t)) \|^p_p \le \left(\frac{q}{p}\right)^p\max_{t\in\mathbb{R}}\int_X|\gamma_q(t)|^{q-p}\, \mathrm{lip}(\gamma_q(t))^p\,\di\mm \\
&\le \left(\frac{q}{p}\right)^p\max_{t\in\mathbb{R}}\left(\int_X|\gamma_q(t)|^q\,\di\mm\right)^{(q-p)/p}\left(\int_X \mathrm{lip}(\gamma_q(t))^q\,\di\mm\right)^{p/q}\\
\nonumber &=\left(\frac{q}{p}\right)^p\max_{t\in\mathbb{R}}\left(\int_X\mathrm{lip}(\gamma_q(t))^q\,\di\mm\right)^{p/q}\le\left(\frac{q}{p}\right)^p\left(\lambda_{1,q}+\varepsilon\right)^{p/q}.
\end{align}
Since $\varepsilon >0$ is arbitrary, we get that for every $1<p<q$ 
$$\lambda_{1,p}(X,\di,\mm)\le \left(\frac{q}{p}\right)^p\left(\lambda_{1,q}(X,\di,\mm)\right)^{p/q}\, ,$$
which is \eqref{eq:monNeu}.
\vspace{3mm}

\textbf{Case $p=1$.}
\\By the very definition of $\lambda_{1,q}$, for every $\varepsilon>0$ there exists a non-null function $f\in \mathsf{Lip}_{bs}(X)$ with $\int_X |f|^{q-2}f\, \di\mm=0$ and
\begin{equation}\label{eq:PfCh1}
\lambda_{1,q}+\varepsilon\geq \frac{\int_X \mathrm{lip}(f)^q \, \di\mm}{\int_X |f|^q \, \di\mm} \, .
\end{equation}
We set $f^+:=\max \{f-m,0\}$ and $f^-:=-\min \{f-m,0\}$, where $m$ is any median of the function $f$. 
Applying the co-area inequality \eqref{eq:coarea} to $(f^+)^{q}$ (respectively $(f^-)^q$) and recalling the definition \eqref{eq:defChConst} of Cheeger's constant $h(X)$, we obtain
\begin{align}\label{eq:PfCh3}
&\int_X \mathrm{lip}[(f^+)^{q}] \, \di\mm + \int_X \mathrm{lip}[(f^-)^{q}] \, \di\mm  \\
&\geq\int_{0}^{\sup \{(f^+)^{q}\}}  \Per(\{(f^+)^{q}>t\}) \, \di t +\int_{0}^{\sup \{(f^-)^{q}\}}  \Per(\{(f^-)^{q}>t\}) \, \di t  \nonumber \\ 
&\geq h(X) \int_{0}^{\sup \{(f^+)^{q}\}}   \mm(\{(f^+)^{q}>t\}) \, \di t +h(X) \int_{0}^{\sup \{(f^-)^{q}\}}   \mm(\{(f^-)^{q}>t\}) \, \di t  \nonumber \\
&=h(X)\int_X  (f^+)^{q}  \di\mm+h(X) \int_X (f^-)^q  \di\mm = h(X)  \int_{X} |f-m|^{q} \, \di\mm\, . \nonumber
\end{align}
Observe that, for any nonnegative function $g$, 
$$\mathrm{lip}[g^{q}]\leq q |g|^{q-1} \mathrm{lip}(g)\, ,$$
and 
$$\mathrm{lip}(f^+)\leq \mathrm{lip}(f), \ \  \mathrm{lip}(f^-)\leq \mathrm{lip}(f)\, .$$
By applying H\"older's inequality we get
\begin{equation}\label{eq:PfCh4}
q\bigg( \int_X \mathrm{lip}(f)^q \, \di\mm \bigg)^{\frac{1}{q}}\bigg( \int_X |f-m|^q \, \di\mm \bigg)^{1-\frac{1}{q}}\geq \int_X \mathrm{lip}[(f^+)^{q}] \, \di\mm + \int_X \mathrm{lip}[(f^-)^{q}] \, \di\mm\, ,
\end{equation}
where we have used that  $|f^+| + |f^-|=|f-m|$.
It follows from \eqref{eq:PfCh3} and \eqref{eq:PfCh4} that for every median $m$ of $f$ it holds
\begin{equation}
\frac{\int_X \mathrm{lip}(f)^q \, \di\mm}{\int_X |f-m|^q \, \di\mm}\geq \left(\frac{h(X)}{q}\right)^q.
\end{equation}
Finally, since $m_q(f):=\int_X |f|^{q-2}f \di\mm=0$ and $m_q$ minimises $\R\ni c\mapsto \int_X |f-c|^q \di\mm$, we have
$$\frac{\int_X \mathrm{lip}(f)^q \, \di\mm}{\int_X |f|^q \, \di\mm}\geq \frac{\int_X \mathrm{lip}(f)^q \, \di\mm}{\int_X |f-m|^q \, \di\mm} $$ 
and we can conclude thanks to \eqref{eq:PfCh1} and the fact that $\varepsilon>0$ is arbitrary.
\end{proof}

\begin{remark}
Let us mention that the monotonicity result above still holds, in very general frameworks, if one replaces the Neumann eigenvalues with \emph{Dirichlet} eigenvalues, see \cite[Theorem 3.2]{Li} for the case of Euclidean domains. 
\end{remark}

It turns out that the monotonicity  in \eqref{eq:incr} is strict, under general assumptions. This was pointed out for Euclidean domains and Dirichlet eigenvalues in \cite{Li} (see the Remark after the proof of Theorem 3.2 therein); the strategy proposed can be adapted to our framework. In the next theorem we explore some instances of this phenomenon, restricting for the sake of simplicity to the framework of $\mathsf{RCD}$ spaces. Let us point out, however, that we expect this rigidity \emph{not} to be linked with a specific synthetic Ricci curvature lower bound, nor with the infinitesimally Hilbertian assumption, even though some \emph{regularity} assumption is necessary, as the examples of Section \ref{subsec:examples} will illustrate.
\medskip

\begin{theorem}\label{thm:strictNeum}
Let $(X,\di,\mm)$ be an $\mathsf{RCD}(K,\infty)$ metric measure space with $\mm(X)<\infty$. Let $q>1$ and let us suppose that there exists a function $f\in \Lambda_q(X,\di,\mm)$ that minimizes \eqref{eq: eqdeflambda1p}.
Then, for every $1\le p<q$ it holds
\begin{equation}\label{eq:strictMonpq}
p\left(\lambda_{1,p}(X,\di,\mm)\right)^{\frac{1}{p}}<q\left(\lambda_{1,q}(X,\di,\mm)\right)^{\frac{1}{q}}\, .
\end{equation} 
\end{theorem}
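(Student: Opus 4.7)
The proof proceeds by contradiction: assume $p\lambda_{1,p}^{1/p} = q\lambda_{1,q}^{1/q}$ for some $1 \le p < q$, and let $f \in \Lambda_q$ be the hypothesized minimizer for $\lambda_{1,q}$. The strategy is to plug $f$ as a test function into the chain of inequalities from the proof of Proposition~\ref{prop:mononeumann} and analyze the resulting equality cases, extracting enough rigid structural information on $f$ to reach a contradiction.

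I would first handle the cleaner case $p = 1$. Running the $p = 1$ chain of inequalities with $f$ as test function, equality throughout forces three rigidities: (a) H\"older equality yields the eikonal $\mathrm{lip}(f) = c|f|$ $\mm$-a.e.\ with $c = \lambda_{1,q}^{1/q}$ (determined via $\int\mathrm{lip}(f)^q\,\di\mm = \lambda_{1,q}\int|f|^q\,\di\mm = \lambda_{1,q}$); (b) by strict convexity of $a \mapsto \int|f-a|^q\,\di\mm$, the equality $\int|f|^q = \int|f-m|^q$ forced by the chain makes $0$ a median of $f$; (c) coarea--Cheeger equality gives $\Per(\{f > t\}) = h(X)\mm(\{f > t\})$ and $\Per(\{f < -t\}) = h(X)\mm(\{f < -t\})$ for $\mathcal{L}^1$-a.e.\ $t > 0$. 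By (b), $\{|f| > t\} = \{f > t\} \sqcup \{f < -t\}$ for $t > 0$; combining the BV coarea formula \eqref{eq:coareaBV} applied to the Lipschitz function $|f|$ with (c):
\[
|D|f||(X) = \int_0^\infty\Per(\{|f|>t\})\,dt = h(X)\int_X|f|\,\di\mm,
\]
while independently $|D|f||(X) = \int\mathrm{lip}(|f|)\,\di\mm = \int\mathrm{lip}(f)\,\di\mm = c\int|f|\,\di\mm$, so $h(X) = c = \lambda_{1,q}^{1/q}$. Combined with $h(X) = q\lambda_{1,q}^{1/q}$, this forces $q = 1$, contradicting $q > 1$.

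For the case $1 < p < q$, a parallel analysis of the chain \eqref{eq:almosteqeigen} yields the same eikonal for $f$: strict convexity of $t \mapsto \|f+t\|_q^q$ for $q > 1$ forces the unique maximizer of $t \mapsto \|\mathrm{lip}(\gamma_q(t))\|_q^q$ to be $t = 0$, whence H\"older equality at $t = 0$ produces $\mathrm{lip}(f) = c|f|$ $\mm$-a.e.\ with $c = \lambda_{1,q}^{1/q}$. Moreover, the equality case forces $g := \mathrm{sgn}(f)|f|^{q/p}$ to be a minimizer for $\lambda_{1,p}$, satisfying via the chain rule the analogous eikonal $\mathrm{lip}(g) = \lambda_{1,p}^{1/p}|g|$ $\mm$-a.e.

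The contradiction in this case is then reached by replaying the Case $p = 1$ argument above on $g$, with $g$ playing the role of $f$ and $p$ in place of $q$. The key verification is that the eikonal for $g$ together with the orthogonality $\int|g|^{p-2}g\,\di\mm = 0$ (inherited from $g \in \Lambda_p$) forces both (i) $0$ to be a median of $g$ and (ii) the superlevel sets of $|g|$ to be optimal Cheeger sets. Granted this, the Case $p = 1$ derivation applied to $g$ yields $h(X) = \lambda_{1,p}^{1/p}$; together with the analogous identity $h(X) = \lambda_{1,q}^{1/q}$ derivable for $f$ by the same refined analysis, and the assumed equality $p\lambda_{1,p}^{1/p} = q\lambda_{1,q}^{1/q}$, this forces $p = q$, the desired contradiction. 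The main obstacle is precisely the rigorous derivation of (i) and (ii) directly from the eikonal, which is no longer automatic (as it was in the $p = 1$ case) and requires a careful coarea analysis on the superlevel sets of $|g|$ combined with Cheeger's inequality.
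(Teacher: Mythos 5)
Your reduction to the equality case and the extraction of the eikonal identity $|\nabla f|=\lambda_{1,q}^{1/q}|f|$ $\mm$-a.e.\ from the H\"older equality in \eqref{eq:almosteqeigen} is exactly the paper's first step (stated there as $|f|=C|\nabla f|$). From that point on, however, your argument has a genuine gap, and it sits precisely where you flag ``the main obstacle''. The paper closes the proof with one structural lemma (Lemma \ref{lemma:techngrad}): a Sobolev function satisfying $|\nabla f|\le C|f|$ must have constant sign, proved by transporting this differential inequality along a $W_2$-geodesic test plan joining normalized restrictions of $\mm$ to $\{f>0\}$ and $\{f\le 0\}$ and applying Gronwall along a.e.\ curve. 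Since $\int_X|f|^{q-2}f\,\di\mm=0$ with $\|f\|_q=1$ forces $f$ to change sign, the contradiction is immediate. Your route instead tries to upgrade the eikonal to the identities $h(X)=\lambda_{1,p}^{1/p}$ and $h(X)=\lambda_{1,q}^{1/q}$, which would require the superlevel sets of $|f|$ (resp.\ $|g|$) to be optimal Cheeger sets. Nothing forces this: for $1<p<q$ the hypothesis $p\lambda_{1,p}^{1/p}=q\lambda_{1,q}^{1/q}$ contains no information about $h(X)=\lambda_{1,1}$, so the Cheeger steps of the chain in Proposition \ref{prop:mononeumann} are not squeezed into equalities. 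Moreover, coarea plus the eikonal only give $\int_0^\infty\Per(\{|f|>t\})\,\di t=\lambda_{1,q}^{1/q}\int_0^\infty\mm(\{|f|>t\})\,\di t$, while Cheeger's inequality bounds $\Per(\{|f|>t\})$ from below by $h(X)\,\mm(\{|f|>t\})$ only on levels of measure at most $1/2$; at best one extracts a one-sided bound of the form $\lambda_{1,q}^{1/q}\ge h(X)$, which together with $h(X)\le q\lambda_{1,q}^{1/q}$ yields no contradiction. So your items (i) and (ii) are not technical refinements to be filled in later: they are the missing idea, and the missing idea is the sign-rigidity lemma.

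Two smaller points. Your $p=1$ case is essentially sound, because there the full chain of Proposition \ref{prop:mononeumann} (including the Cheeger steps) is an equality by hypothesis, so the median and perimeter identities really are forced; but it is far heavier than necessary. Once $1<p<q$ is settled, $p=1$ follows in one line from the non-strict monotonicity: $\lambda_{1,1}\le p'\left(\lambda_{1,p'}\right)^{1/p'}<q\left(\lambda_{1,q}\right)^{1/q}$ for any $1<p'<q$, which is what the paper does. Finally, your claim that $g=\mathrm{sign}(f)|f|^{q/p}$ lies in $\Lambda_p$ and minimizes $\lambda_{1,p}$ can indeed be justified in the equality case (the maximizing parameter in the chain is forced to be $t=0$), but this does not help with the gap above.
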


To prove the strict monotonicity we will rely on the following technical lemma. Basically it amounts to say that a non-null Sobolev function $f$ such that $|\nabla f|\le C|f|$ for some constant $C\ge 0$ needs to have constant sign and cannot vanish on a large set.

\begin{lemma}\label{lemma:techngrad}
Let $(X,\di,\mm)$ be an $\mathsf{RCD}(K,\infty)$ metric measure space and let $p>1$. Assume that there exists a function $f\in W^{1,p}(X,\di,\mm)$ such that
\begin{equation}\label{eq:asscontr}
|\nabla f|\le C|f|\, ,\quad\text{$\mm$-a.e. on $X$}\, ,
\end{equation}
for some constant $C\ge 0$. Then, either $f>0$, $f=0$ or $f<0$ holds $\mm$-a.e. on $X$.
\end{lemma}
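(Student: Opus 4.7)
The hypothesis $|\nabla f|\le C|f|$ says, heuristically, that $|\nabla \log|f||\le C$, so $\log|f|$ should be Lipschitz and hence cannot jump from $-\infty$ at the zero set of $f$ to a finite value elsewhere. I would implement this intuition via a regularised logarithm combined with the Sobolev-to-Lipschitz property of $\mathsf{RCD}(K,\infty)$ spaces.

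The first step is a reduction to the non-negative case. The chain rule applied to $f_\pm = \max(\pm f,0)$ yields $|\nabla f_\pm| = |\nabla f|\,\chi_{\{\pm f>0\}}\le C f_\pm$ $\mm$-a.e., so it suffices to prove the dichotomy ``$f\equiv 0$ or $f>0$ $\mm$-a.e.'' for non-negative $f\in W^{1,p}(X,\di,\mm)$ satisfying the gradient bound. Splitting $f=f_+-f_-$ (and using $f_+ f_-=0$ to rule out both $f_\pm>0$ simultaneously) then produces the three alternatives in the statement.

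Suppose now $f\ge 0$ satisfies both $\mm(\{f=0\})>0$ and $\mm(\{f\ge \delta\})>0$ for some $\delta>0$, aiming at a contradiction. Applying the chain rule to the truncation $\min(f,M)$ with $M\ge \delta$ preserves the gradient bound and both sets above, so we may further assume $f\in L^\infty$ with $\|f\|_\infty\le M$. For $\varepsilon\in (0,1)$ set $g_\varepsilon:=\log(f+\varepsilon)$: this is bounded, $\log\varepsilon\le g_\varepsilon\le \log(M+\varepsilon)$, and the chain rule gives
\begin{equation*}
|\nabla g_\varepsilon| \,=\, \frac{|\nabla f|}{f+\varepsilon} \,\le\, \frac{Cf}{f+\varepsilon} \,\le\, C \quad \mm\text{-a.e.}
\end{equation*}
The Sobolev-to-Lipschitz property of $\mathsf{RCD}(K,\infty)$ spaces then produces a $C$-Lipschitz representative $\bar g_\varepsilon$ of $g_\varepsilon$. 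Since $\bar g_\varepsilon$ is continuous and $\textsf{supp}(\mm)=X$, the closed sets $\{\bar g_\varepsilon\le \log \varepsilon\}$ and $\{\bar g_\varepsilon\ge \log(\delta+\varepsilon)\}$ contain the non-empty supports of $\mm$ restricted to $\{f=0\}$ and to $\{f\ge\delta\}$ respectively. Picking once and for all $x$ in the first support and $y$ in the second (independently of $\varepsilon$), the Lipschitz estimate gives
\begin{equation*}
\log\!\Big(1+\tfrac{\delta}{\varepsilon}\Big) \,\le\, \bar g_\varepsilon(y) - \bar g_\varepsilon(x) \,\le\, C\,\di(x,y),
\end{equation*}
whose left-hand side tends to $+\infty$ as $\varepsilon\downarrow 0$ while the right-hand side is a fixed finite number---contradiction.

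The main obstacle I expect is a clean application of the Sobolev-to-Lipschitz property when $\mm(X)=+\infty$, since its standard formulation requires $g_\varepsilon\in W^{1,2}(X,\di,\mm)$ whereas a bounded function with bounded gradient is only locally in $W^{1,2}$ in general. This is repaired by multiplying $g_\varepsilon$ by a Lipschitz cut-off supported on a large ball containing the pre-chosen $x$ and $y$: the cut-off function belongs to $W^{1,2}$, its gradient is uniformly bounded in $\varepsilon$ (because $g_\varepsilon$ itself is uniformly bounded for $\varepsilon\in(0,1)$), and its Lipschitz representative coincides with $\bar g_\varepsilon$ on the ball, so the contradiction still runs at $(x,y)$ with a Lipschitz constant independent of $\varepsilon$. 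All the remaining pieces---the chain rule, the stability of the gradient estimate under truncation and under $f\mapsto f_\pm$, and the promotion of $\mm$-a.e.\ inequalities to pointwise inequalities on supports via continuity of $\bar g_\varepsilon$---are standard.
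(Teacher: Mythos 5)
Your strategy is genuinely different from the paper's. The paper takes the optimal geodesic plan $\Pi$ between the normalized restrictions of $\mm$ to a bounded subset of $\{f>0\}$ and one of $\{f\le 0\}$; since $\Pi$ is a test plan concentrated on geodesics, for $\Pi$-a.e.\ $\gamma$ the map $f\circ\gamma$ is absolutely continuous with $|\tfrac{\di}{\di t}(f\circ\gamma)|\le C_\gamma\,|f\circ\gamma|$, and Gronwall's inequality applied from an interior zero of $f\circ\gamma$ forces $f\circ\gamma\equiv 0$, contradicting the endpoint conditions. Your route through $g_\varepsilon=\log(f+\varepsilon)$ and the Sobolev-to-Lipschitz property is a valid alternative, and when $\mm(X)<\infty$ (the only case actually needed for \autoref{thm:strictNeum}) it is complete as written: there $g_\varepsilon\in W^{1,2}(X,\di,\mm)$ directly, no cut-off is needed, and $\log(1+\delta/\varepsilon)\le C\,\di(x,y)$ gives the contradiction.

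The gap is in your treatment of the case $\mm(X)=\infty$. The parenthetical claim that ``$g_\varepsilon$ itself is uniformly bounded for $\varepsilon\in(0,1)$'' is false: on the positive-measure set $\{f=0\}$ one has $g_\varepsilon=\log\varepsilon$, so $\|g_\varepsilon\|_\infty\sim|\log\varepsilon|\to\infty$. Hence, for a cut-off $\eta$ with ${\rm Lip}(\eta)\le 1/R$ and $R$ fixed, the Leibniz rule only yields $|\nabla(\eta g_\varepsilon)|\le C+R^{-1}\left(|\log\varepsilon|+\log(M+1)\right)$, which diverges at exactly the rate $|\log\varepsilon|$ of the quantity $\log(1+\delta/\varepsilon)$ you want to contradict; as written, the final display does not close. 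The argument can be repaired: fix the cut-off radius $R>\di(x,y)$ once and for all (with $x,y$ inside the ball where $\eta\equiv 1$), apply the global Sobolev-to-Lipschitz property to $\eta g_\varepsilon$, and observe that the resulting inequality $\log\frac{\delta+\varepsilon}{\varepsilon}\le\left(C+\frac{|\log\varepsilon|+\log(M+1)}{R}\right)\di(x,y)$ fails for small $\varepsilon$, because its right-hand side grows like $\frac{\di(x,y)}{R}\,|\log\varepsilon|$ with $\frac{\di(x,y)}{R}<1$. Note that this repaired version must keep the full (global) Lipschitz constant of the product $\eta g_\varepsilon$; asserting that the representative is $C$-Lipschitz on the ball where $\eta\equiv 1$ would require a localized version of the Sobolev-to-Lipschitz property that you have not justified. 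With this quantitative correction the rest of your proof (reduction to $f\ge 0$, truncation, and the promotion of a.e.\ bounds to the supports via continuity) is sound.
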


\begin{proof}
Let $f\in W^{1,p}(X,\di,\mm)$ be a non-null function and let us suppose by contradiction and without loss of generality that $\mm(\{f>0\})>0$ and $\mm(\{f\le 0\})>0$. In particular, we can find two bounded sets $A_1,A_2\subset X$ with positive and finite measure and such that $f>0$ a.e. on $A_1$ and $f\le 0$ a.e. on $A_2$. Let $\mu_1$ and $\mu_2$ be the probability measures obtained by restriction and normalization of the measure $\mm$ to $A_1$ and $A_2$ respectively. Observe that $f> 0$ holds $\mu_1$-a.e. and $f\le 0$ holds $\mu_2$-a.e. .\\ 
Then let $\Pi\in\mathcal{P}(C([0,1],X))$ be the optimal geodesic plan representing the $W_2$-geodesic between $\mu_1$ and $\mu_2$. Observe that $\Pi$ is a test plan and it is concentrated on constant speed geodesics. Therefore the following conditions are satisfied for $\Pi$-a.e. $\gamma\in C([0,1],X)$:
\begin{itemize}
\item[(i)] $f\circ\gamma:[0,1]\to\mathbb{R}$ has an absolutely continuous and $W^{1,p}$ representative, that we shall identify with $f\circ\gamma$ without risk of confusion;
\item[(ii)] $f(\gamma(0))>0$ and $f(\gamma(1))\le 0$;
\item[(iii)] $|\frac{\di}{\di t}f(\gamma(t))|\le C_{\gamma}|f(\gamma(t))|$ for a.e. $t\in (0,1)$, for some constant $C_{\gamma}\ge 0$ (depending on $C$ and the length of the geodesic $\gamma$).
\end{itemize}
Conditions (i) and (iii) follow from the fact that $\Pi$ is a test plan, together with \eqref{eq:asscontr}. Condition (ii) follows from the fact that $f>0$ and $f\le 0$ hold $\mu_1$ and $\mu_2$-a.e., respectively, and $({\rm e}_0)_{\sharp}\Pi=\mu_1$, $({\rm e}_1)_{\sharp}\Pi=\mu_2$.

Let us consider a curve $\gamma$ such that (i), (ii) and (iii) are verified. We next prove that this yields to a contradiction.  Indeed, letting $t_0\in(0,1]$ be such that $f(\gamma(t_0))=0$, by integrating (iii) we easily obtain the inequality
\begin{equation}
|f(\gamma(t))|\le \int_{[t_0,t]}C_{\gamma}|f(\gamma(s))|\di s\, ,
\end{equation} 
for any $t\in[0,1]$. The integral form of Gronwall's inequality yields then that $|f(\gamma(t))|=0$ for any $t\in[0,1]$, contradicting (ii).
\end{proof}

\begin{proof}[Proof of Theorem \ref{thm:strictNeum}]
Fix $1<p<q$ and let us suppose by contradiction that 
\begin{equation}\label{eq:equalityneum}
p\left(\lambda_{1,p}(X,\di,\mm)\right)^{\frac{1}{p}}=q\left(\lambda_{1,q}(X,\di,\mm)\right)^{\frac{1}{q}}\, .
\end{equation}
By assumption, there exists $f\in W^{1,q}(X,\di,\mm)$ such that
\begin{equation}\label{eq:condneumeig}
\int_X|f|^q\, \di\mm=1\, ,\,\,\,\;\int_X|f|^{q-2}f\, \di\mm=0\,,\,\;\,\,\int_X|\nabla f|^q\, \di\mm=\lambda_{1,q}(X,\di,\mm)\, .
\end{equation}
Notice that we can apply the very same argument used in the proof of Proposition \ref{prop:mononeumann} (case $p>1$) with $\varepsilon=0$ and by replacing the slope with the minimal weak upper gradient. Since equality holds in \eqref{eq:equalityneum}, equality holds in H\"older's inequality, that we used in equation \eqref{eq:almosteqeigen}. In particular, there exists a constant $C\ge 0$ such that $|f|=C|\nabla f|$ $\mm$-a.e.

We now appeal to Lemma \ref{lemma:techngrad} to reach the desired contradiction, since by the first two conditions in \eqref{eq:condneumeig} we know that $f$ is non-null and must change its sign.
\medskip

Finally,  notice that \eqref{eq:strictMonpq} holds also for $p=1$: indeed the strict inequality trivially follows by the case $p>1$ and Proposition \ref{prop:mononeumann}.

\end{proof}
An immediate corollary is the following:
\begin{corollary}\label{cor:opteqCheeg}
Let $(X,\di,\mm)$ be an $\mathsf{RCD}(K,\infty)$ metric measure space, with $\mm(X)=1$,  admitting a superlinear isoperimetric profile. Then the function
\begin{equation}\label{eq:incr}
[1,\infty)\ni p\mapsto p\left(\lambda_{1,p}(X,\di,\mm)\right)^{\frac{1}{p}}
\end{equation}
is strictly increasing.
\end{corollary}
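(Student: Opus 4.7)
The plan is to combine the (non-strict) monotonicity of Proposition \ref{prop:mononeumann} with the rigidity statement of Theorem \ref{thm:strictNeum}. Indeed, Theorem \ref{thm:strictNeum} upgrades the comparison $p\lambda_{1,p}^{1/p}\leq q\lambda_{1,q}^{1/q}$ to a strict inequality for every $1\leq p<q<\infty$, provided that $\lambda_{1,q}$ is attained by some function in $\Lambda_q(X,\di,\mm)$. Hence the only thing left to establish is the existence of such a minimizer for each $q>1$, after which strict monotonicity on $[1,\infty)$ follows by applying Theorem \ref{thm:strictNeum} to every pair $1\leq p<q$.

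To produce a minimizer, I would run the direct method. Pick a minimizing sequence $(f_n)\subset\Lambda_q(X,\di,\mm)$, so that $\|f_n\|_{L^q}=1$, $\int_X |f_n|^{q-2}f_n\,\di\mm=0$ and $\int_X|\nabla f_n|^q\,\di\mm\to\lambda_{1,q}$. Then $(f_n)$ is bounded in $W^{1,q}(X,\di,\mm)$. Under the assumption of superlinear isoperimetric profile, Theorem \ref{thm:compembedding}(1) gives the compact embedding $W^{1,q}(X,\di,\mm)\hookrightarrow L^q(X,\mm)$, so, up to extracting a subsequence, $f_n\to f$ strongly in $L^q(X,\mm)$ and $\mm$-a.e.\ on $X$. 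The normalization $\|f\|_{L^q}=1$ passes to the limit by continuity of the $L^q$-norm. For the orthogonality condition, set $g_n:=|f_n|^{q-2}f_n$: since $\|g_n\|_{L^{q/(q-1)}}=\|f_n\|_{L^q}^{q-1}=1$, reflexivity of $L^{q/(q-1)}$ combined with the $\mm$-a.e.\ convergence $g_n\to|f|^{q-2}f$ forces weak convergence $g_n\rightharpoonup|f|^{q-2}f$ in $L^{q/(q-1)}$. Testing this weak convergence against the constant $1\in L^q(X,\mm)$ (which lies in $L^q$ because $\mm(X)=1$) yields $\int_X|f|^{q-2}f\,\di\mm=0$, so $f\in\Lambda_q(X,\di,\mm)$. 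Finally, the lower semicontinuity of the $q$-Cheeger energy with respect to $L^q$ convergence, which is built into the relaxation definition \eqref{eq:defChm}, gives
\begin{equation*}
\int_X|\nabla f|^q\,\di\mm\leq\liminf_{n\to\infty}\int_X|\nabla f_n|^q\,\di\mm=\lambda_{1,q},
\end{equation*}
so $f$ attains the infimum.

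The main technical point in the plan is the passage to the limit in the nonlinear constraint $\int_X|f_n|^{q-2}f_n\,\di\mm=0$; I do not expect a serious obstacle there, since the uniform $L^{q/(q-1)}$-bound on $|f_n|^{q-2}f_n$ together with the $\mm$-a.e.\ convergence and the finiteness of $\mm$ make the weak/strong duality argument routine. Once the minimizer is in hand, invoking Theorem \ref{thm:strictNeum} for every $1\leq p<q<\infty$ (the case $p=1$ being included by the last paragraph of its proof) delivers the strict inequality $p\lambda_{1,p}^{1/p}<q\lambda_{1,q}^{1/q}$, which is exactly the claimed strict monotonicity of $p\mapsto p(\lambda_{1,p}(X,\di,\mm))^{1/p}$ on $[1,\infty)$.
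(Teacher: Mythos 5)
Your proposal is correct and follows essentially the same route as the paper: the paper likewise deduces the existence of a minimizer for $\lambda_{1,q}$, $q>1$, from the compact embedding of $W^{1,q}$ into $L^q$ granted by the superlinear isoperimetric profile (Theorem \ref{thm:compembedding}), and then concludes by Theorem \ref{thm:strictNeum}. The only difference is that you spell out the direct-method details (passage to the limit in the normalization and in the nonlinear constraint, lower semicontinuity of the $q$-Cheeger energy) which the paper leaves implicit.
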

\begin{proof}
Let us observe that the infimum in the variational definition of $\lambda_{1,p}$ is attained for any $1<p<\infty$, under our assumptions. This is a consequence of the superlinearity of the isoperimetric profile which yields in turn the compactness of the embedding of $W^{1,p}$ into $L^p$, see Theorem \ref{thm:compembedding} (i).
The result thus follows from \autoref{thm:strictNeum}.
\end{proof}

Under some additional assumptions, we show that the Cheeger's inequality is strict even when $\mm(X)=\infty$. Let us focus on the case $p=1$ and $q=2$, and notice that here $\lambda_1$ is naturally replaced by $\lambda_0$. 
\begin{theorem}\label{thm:rigcheeginfinite}
Let $(X,\di,\mm)$ be an $\mathsf{RCD}(K,\infty)$ metric measure space with $\mm(X)=\infty$ and $K\in\R$. Let us suppose that $\lambda_0:=\inf{\sigma_d(-\Delta)}<\Sigma$ (this is always the case if the spectrum is discrete) and that the $\lambda_0$-eigenfunction is in $L^{\infty}(X,\mm)$.

Then the equality in Cheeger's inequality is never attained, i.e.
\begin{equation}\label{eq:ChIneStrictinfinite}
\lambda_0>\frac{1}{4}h(X)^2.
\end{equation}
\end{theorem}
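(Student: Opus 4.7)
The plan is to run the standard Cheeger-type chain of inequalities on the square of the $\lambda_0$-eigenfunction, and then exploit the rigidity in Cauchy--Schwarz (combined with the Leibniz formula for the Laplacian) to show that this square must be harmonic. Together with the infinite total measure, this will force the eigenfunction to vanish, contradicting its normalization.

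First, since $\lambda_0<\Sigma$, the second part of \autoref{thm:compembedding} provides a $\lambda_0$-eigenfunction $f\in W^{1,2}(X,\di,\mm)$ with $\int_X f^2\,\di\mm=1$, $\int_X|\nabla f|^2\,\di\mm=\lambda_0$, and $-\Delta f=\lambda_0 f$; by the standing assumption $f\in L^\infty(X,\mm)$. By \autoref{lem:fW12f2BV}, $f^2\in{\rm BV}(X,\di,\mm)$ with $|Df^2|\le 2|f||\nabla f|\,\mm$. Combining the coarea formula for ${\rm BV}$ functions \eqref{eq:coareaBV}, the definition of the Cheeger constant, and Cauchy--Schwarz, one obtains
\begin{equation*}
h(X)=h(X)\int_X f^2\,\di\mm\le \int_0^\infty \Per(\{f^2>t\})\,\di t=|Df^2|(X)\le 2\int_X|f|\,|\nabla f|\,\di\mm\le 2\sqrt{\lambda_0}.
\end{equation*}
This already yields the non-strict Cheeger inequality $\lambda_0\ge h(X)^2/4$.

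Now assume, by contradiction, that $\lambda_0=h(X)^2/4$. Then every inequality above is an equality; in particular, equality in Cauchy--Schwarz forces $|\nabla f|^2=\lambda_0\, f^2$ $\mm$-almost everywhere. Invoking the Leibniz rule for the Laplacian on $\mathsf{RCD}(K,\infty)$ spaces (valid because $f\in D(\Delta)\cap L^\infty(X,\mm)$), one finds that $f^2\in D(\Delta)$ with
\begin{equation*}
\Delta(f^2)=2f\,\Delta f+2|\nabla f|^2=-2\lambda_0 f^2+2\lambda_0 f^2=0.
\end{equation*}
Since $f\in W^{1,2}\cap L^\infty$, also $f^2\in W^{1,2}(X,\di,\mm)$. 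Testing $\Delta(f^2)=0$ against $f^2$ itself via integration by parts yields
\begin{equation*}
\int_X|\nabla f^2|^2\,\di\mm=-\int_X f^2\,\Delta(f^2)\,\di\mm=0,
\end{equation*}
so $|\nabla f^2|=0$ $\mm$-a.e. The Sobolev-to-Lipschitz property of $\mathsf{RCD}(K,\infty)$ spaces then implies that $f^2$ is $\mm$-a.e. constant; since $\mm(X)=\infty$ and $f^2\in L^1(X,\mm)$ with $\int_X f^2\,\di\mm=1$, this constant must be $0$, contradicting the normalization.

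\textbf{Main obstacle.} The delicate point is the rigorous justification of the Leibniz identity $\Delta(f^2)=2f\,\Delta f+2|\nabla f|^2$ and the membership $f^2\in D(\Delta)$ for $f\in D(\Delta)\cap L^\infty$ in the $\mathsf{RCD}(K,\infty)$ setting; everything else is either a direct consequence of the preliminaries or a one-line computation. Once this chain rule is available, and taking into account that the eigenfunction is bounded (which is exactly the role of the $L^\infty$-hypothesis: it ensures both $f^2\in W^{1,2}$ and the validity of the Leibniz rule), the contradiction is immediate.
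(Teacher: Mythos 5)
Your Step 1 is exactly the paper's: the chain $h(X)=h(X)\int_X f^2\,\di\mm\le\int_0^\infty\Per(\{f^2>t\})\,\di t=|Df^2|(X)\le 2\int_X|f||\nabla f|\,\di\mm\le 2\sqrt{\lambda_0}$, followed by the equality case of Cauchy--Schwarz to get $|\nabla f|=\sqrt{\lambda_0}\,|f|$ $\mm$-a.e. Your Step 2, however, takes a genuinely different route. The paper feeds the eikonal-type identity into the Ambrosio--Trevisan theory of Lagrangian flows: it produces a test plan along whose curves $F:=f\circ\gamma$ solves the Riccati equation $F'=\lambda_0F^2$, and derives the contradiction from the finite-time blow-up of bounded solutions with positive initial datum. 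You instead invoke the Leibniz rule to get $\Delta(f^2)=2f\Delta f+2|\nabla f|^2=0$ and integrate by parts. Your route is correct and arguably shorter, and the gap you flag is fillable by standard references rather than being a genuine obstacle: since $f\in D(\Delta)\cap L^\infty(X,\mm)$ with $|\nabla f|=\sqrt{\lambda_0}|f|\in L^\infty(X,\mm)$ and $\Delta f=-\lambda_0 f\in W^{1,2}(X,\di,\mm)$, the eigenfunction is a test function in the sense of Gigli--Savar\'e (see \cite{G,Savare}), for which the algebra property and the Leibniz/chain rule for the Laplacian, giving $f^2\in D(\Delta)$ with the stated formula, are established in the $\mathsf{RCD}$ calculus. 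Two further remarks. First, your conclusion can be reached even more directly: $0=\int_X|\nabla(f^2)|^2\,\di\mm=4\int_X f^2|\nabla f|^2\,\di\mm=4\lambda_0\int_X f^4\,\di\mm$ forces $f=0$ $\mm$-a.e. outright, with no need for the Sobolev-to-Lipschitz property (the hypothesis $\mm(X)=\infty$ has already done its job in guaranteeing $\lambda_0>0$ via \autoref{thm:compembedding}). Second, as to what each approach buys: the paper's flow argument uses only the pointwise identity $|\nabla f|=\sqrt{\lambda_0}|f|$ together with boundedness and Lipschitzianity, and is the same mechanism underlying \autoref{lemma:techngrad}; yours leans on the Hilbertian structure through $D(\Delta)$ and the Leibniz rule, but avoids the superposition/Lagrangian-flow machinery entirely.
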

\begin{proof}
The proof is by contradiction. 

\textbf{Step 1.} Aim for this step is to prove that, assuming equality holds in Cheeger's inequality, we obtain the existence of a function $f\in D(\Delta)$ such that $-\Delta f=\lambda_0 f$ and 
\begin{equation}\label{eq:modgrad}
|\nabla f|=\sqrt{\lambda_0}\,|f|,\,\,\,\text{$\mm$-a.e. .}
\end{equation}

First of all, under our assumptions \autoref{thm:compembedding} implies that $\lambda_0$ (and thus also $h(X)$) is positive and that there exists $f\in D(\Delta) \subset  W^{1,2}(X,\di,\mm)$ with $-\Delta f= \lambda_{0}f$, and $\|f\|_2=1$.  
From Lemma \ref{lem:fW12f2BV} we know that
$$|D (f)^{2}|(X)\leq 2 \int_{X}  |f|\, | \nabla f| \, \di \mm, $$
and we can apply the Cauchy-Schwarz inequality to infer

\begin{equation}\label{eq:CSProof}
2\left( \int_X |\nabla f|^2 \, \di \mm \right)^{\frac{1}{2}}\left( \int_X |f|^2 \, \di \mm \right)^{\frac{1}{2}}\geq |D(f)^{2}|(X) 
\end{equation}
Using the coarea formula \eqref{eq:coareaBV} and recalling that $\|f\|_2=1$, we obtain
\begin{equation}\label{eq:almosteqCh}
\begin{aligned}
& 2\sqrt{\lambda_0}=2\bigg( \int_X | \nabla f|^2 \, \di\mm \bigg)^{\frac{1}{2}} 
\overset{\eqref{eq:CSProof}}{\geq} |D(f)^{2}|(X)   \overset{ \eqref{eq:coareaBV}} {=}  \int_{0}^{\infty}  \Per(\{(f)^{2}>t\}) \, \di t   \\ 
&\ge h(X)\int_{0}^{\infty}   \mm(\{(f)^{2}>t\}) \, \di t  
=    h(X)\int_{X} f^{2} \, \di \mm  =h(X)=2\sqrt{\lambda_0}, 
\end{aligned}
\end{equation}
where the last identity comes from the assumption that equality is achieved in Cheeger's inequality. It follows that all the inequalities in \eqref{eq:almosteqCh} are actually equalities. In particular 
\begin{equation}\label{eq:CSeq}
\bigg( \int_X |\nabla f|^2 \, \di\mm \bigg)^{\frac{1}{2}}=\int_X |\nabla f||f|\,\di\mm\, .
\end{equation}
\\By the equality case \eqref{eq:CSeq} in the Cauchy-Schwartz inequality we can infer that $|\nabla f|=C|f|$ $\mm$-a.e., for some constant $C> 0$. Since
$$\lambda_0=\int_X |\nabla f|^2\,\di\mm=\int_X C^2f^2\,\di\mm=C^2$$
the claim \eqref{eq:modgrad} follows.
\medskip

\textbf{Step 2.}
Since $f\in L^{\infty}(X,\mm)$ and $-\Delta f=\lambda_0f$ we have that $\Delta f\in L^{\infty}(X,\mm)$ and also (up to a suitable choice of $\mm$-a.e. representative) $f\in \mathsf{Lip}_b(X)$ by the $L^{\infty}-\mathsf{Lip}$ regularization of the heat semigroup \eqref{strong Feller}.

We are now in position to apply \cite[Theorem 9.6 (b)]{AT14}. Thus, for any $T>0$ we can find a test plan $\Pi\in\mathcal{P}(C([0,T],X))$ such that $({\rm e}_0)_{\sharp}\Pi=\mm$ and 
\begin{equation}\label{eq:flow}
f(\gamma(t))-f(\gamma(s))=\int_s^t|\nabla f|^2(\gamma(r))\di r=\int_s^t\lambda_0f^2(\gamma(r))\di r,
\end{equation}
for any $0\le s\le t\le T$ and for $\Pi$-almost every $\gamma$. Observe that, fixing any such curve $\gamma$ and setting $F(t):=f(\gamma(t))$, $F$ is smooth and it verifies the ODE
\begin{equation}\label{eq:ode}
F'(t)=\lambda_0F^2(t)
\end{equation}
in the classical sense. Indeed it is absolutely continuous, it solves the ODE in the almost everywhere sense and the derivative itself is continuous, allowing to bootstrap the regularity.
Observe that if $F(0)=0$, then $F(t)=0$ for every $t\in[0,T]$. Otherwise, if $F(0)>0$ there is no bounded solution of \eqref{eq:ode} up to time $1/(\lambda_0F(0))$.

We claim that $f$ vanishes identically, contradicting the assumption $\int_X f^2\, \di\mm=1$. If this is not the case we can find $\epsilon>0$ and a set of positive measure $E\subset X$ such that for any $x\in E$ it holds $f(x)\lambda_0>\epsilon$. Then we apply the construction above with $T=1/\epsilon$ finding $\Pi\in\mathcal{P}(C([0,T],X))$ verifying \eqref{eq:flow} and such that, for a set of positive $\Pi$-measure of curves $\gamma$, it holds $f(\gamma(0))\lambda_0>\epsilon$. Recalling what we pointed out above concerning bounded solutions of \eqref{eq:ode}, we obtain a contradiction, since $f$ is bounded.

\end{proof}

\begin{remark}\label{rem:CommentsThm41}
We notice that there exist $\mathsf{RCD}(K,\infty)$ metric measure spaces satisfying the assumptions of \autoref{thm:rigcheeginfinite}. For instance, let us consider $\mathbb{R}$ endowed with the Euclidean distance $\mathsf{d}(x,y)=|x-y|$ and the measure $\mm:=e^{x^2/2}d\mathcal{L}^1.$ One can easily see that it satisfies the $\mathsf{RCD}(-1,\infty)$ condition. A result of Wang \cite[Example 5.1]{Wang} ensures that the spectrum is discrete.  Finally, by using Proposition \ref{prop:Tamaninicond}, the associated heat semigroup is ultracontractive (thus any eigenfuction of the $2$-Laplacian is in $L^{\infty}$). 

\end{remark}

\medskip

\subsection{Examples}\label{subsec:examples}

We have seen in the previous results that the existence of a first eigenfunction of the Laplacian is a relevant assumption in order to obtain that the Cheeger's inequality is strict. We now collect a series of examples of  $\mathsf{RCD}$ spaces (actually, smooth Riemannian manifolds) where this assumption is not satisfied and equality in Cheeger's inequality is achieved.

\begin{example}\label{example:eqtriv0}
\textbf{An $\mathsf{RCD}(0,n)$ space with infinite measure satisfying} $\lambda_0(X)=h(X)=0$.

A classical example of equality in Cheeger's inequality is obtained in the Euclidean space $(\R^n,|\cdot|,\di\mathcal{L}^n)$. Indeed, it is well known that $\lambda_0(\R^n)=0$ (see e.g. \cite[Example 10.9]{Grig}). By Cheeger's inequality and the trivial nonnegativity of the Cheeger constant (or by direct computation, considering balls of increasing radii as competitors in the definition), we also have $h(\R^n)=0$. 
\end{example}

\begin{example}\label{example:eqtriv1}
\textbf{An $\mathsf{RCD}(K,2)$ space with finite measure satisfying } $\lambda_1(X)=h(X)=0$.

The following example is strongly inspired by \cite{Br}. For $|t|>1$, let us define the function $f(t):=e^{-\sqrt{|t|}}$. We consider an extension $F:\R\rightarrow \R$ of $f$ such that $F\in \mathcal{C}^{\infty}(\R)$, $F$ is even and $F(t)>0$ for every $t\in \R$. 
We denote by $S$ the surface of revolution parametrized by 
$$(t,\theta)\mapsto (F(t)\cos(\theta),F(t)\sin(\theta),t)\, , \qquad (t,\theta)\in \R\times [0,2\pi).$$
The surface $S$ is a $2$-dimensional Riemannian manifold with the warped product structure $\R\times_F \mathbb{S}^1$, where the $\R$ factor is endowed with the arc-length metric $dx^2=(1+F'(t)^2)dt^2$ and the $\mathbb{S}^1$ factor  is endowed with the standard metric. 

We claim that $S$ has finite volume, Gaussian curvature bounded from below and $\lambda_1(X)=0$.

Indeed, 
$$\mathrm{vol}(S)=2\pi\int_{-\infty}^{\infty}F(t)\sqrt{1+(F'(t))^2}\,\di t<\infty\, ,$$
since the integrand is continuous and integrable at infinity as a consequence of the asymptotic
$$F(t)\sqrt{1+(F'(t))^2}\sim e^{-\sqrt{|t|}}\qquad \textrm{as} \ |t|\to \infty\, .$$

The Gaussian curvature $\mathsf{K}$ can be computed using a classical formula for surfaces of revolution, i.e.
$$\mathsf{K}_S(t,\theta)=-\frac{F''(t)}{F(t)\sqrt{1+(F'(t))^2}}\, .$$
We thus observe that $\mathsf{K}_S$ is bounded from below since $F$ is smooth, strictly positive and 
$$-\frac{F''(t)}{F(t)\sqrt{1+(F'(t))^2}}=-\frac{\sqrt{|t|}+1}{4|t|^{3/2}\sqrt{1+\frac{e^{-2\sqrt{|t|}}}{4|t|}}}>-\frac{1}{2}\, ,    \qquad \textrm{for} \ |t|>1\, . $$
Thus $S$ is an $\mathsf{RCD}(K,2)$ space, for some $K\in \R$.
It remains to show that $\lambda_1(S)=0$ (which will imply in turn that $h(S)=0$ by Cheeger's inequality). In order to prove this, it is sufficient to show (see also \cite[Section $3$]{Ledoux})
$$\mu_S:=\limsup_{r\to \infty} \frac{1}{r}\log\Big(\mathrm{vol}(S)-\mathrm{vol}(B_r)\Big)=0\, ,$$
where $B_r$ denotes the geodesic ball of centre $(0,0)$ and radius $r$.

Since by elementary considerations 
$$\mathrm{vol}(S)-\mathrm{vol}(B_r)\geq 2\pi\int_{|t|>x}F(t)\sqrt{1+(F'(t))^2}\,\di t$$
where $x$ is defined so that
$$r=\int_0^x \sqrt{1+(F'(t))^2}\,\di t\, ,$$
we obtain
$$\mu_S\geq \limsup_{x\to \infty} \frac{\log\Big(\int_{x}^{\infty}F(t)\sqrt{1+(F'(t))^2}\,\di t\Big)}{\int_0^x \sqrt{1+(F'(t))^2}\,\di t}$$
and the limit superior in the right hand side is actually a limit equal to $0$. To see this, one can apply twice L'Hospital's rule and then use the explicit expression of $F(x)$ for $x>1$. 
Since it trivially holds that $\mu_S\leq 0$, we have $\mu_S=0$ and the claim follows.

\end{example}

\begin{example}
\textbf{An $\mathsf{RCD}(-1,2)$ space with infinite measure satisfying }$\lambda_0(X)=\frac{1}{4}h(X)^2>0$.

We claim that the hyperbolic plane $\mathbb{H}^2$ realizes the equality in Cheeger's inequality with the additional property, with respect to Example \ref{example:eqtriv0}, that the bottom of the spectrum and the Cheeger constant are non trivial. Of course, the volume of the hyperbolic plane is infinite.

Let us recall that, as proved for instance in \cite{Mc}, on the hyperbolic plane (equipped with the canonical volume measure) it holds $\lambda_0=1/4$.

Let us verify that the Cheeger constant of the hyperbolic plane equals $1$. In order to do so we recall the isoperimetric inequality 
\begin{equation}\label{eq:isophyp}
\Per(A)^2\ge 4\pi \mm(A)+\mm(A)^2,
\end{equation}
for any set of finite perimeter $A\subset\mathbb{H}^2$, see \cite{Be,Os,Sc} dealing with sets with smooth boundary, the extension to sets of finite perimeter can be obtained with standard approximation arguments. Moreover, we recall that geodesic balls realize the equality in \eqref{eq:isophyp}. From \eqref{eq:isophyp} we easily deduce that
\begin{equation*}
\frac{\Per(A)}{\mm(A)}\ge 1,
\end{equation*}
for any $A\subset\mathbb{H}^2$ with finite perimeter. This proves that $h(\mathbb{H}^2)\ge 1$. To prove that $h(\mathbb{H}^2)=1$ we just observe that geodesic balls with radii going to infinity verify
\begin{equation*}
\frac{\Per(B_r)}{\mm(B_r)}\to 1,
\end{equation*}
as $r\to \infty$, by direct computation or by equality in \eqref{eq:isophyp}. This proves that $h(\mathbb{H}^2)=1$ and therefore equality holds in Cheeger's inequality.

\end{example}

\begin{example}\label{examplethrice}
\textbf{An $\mathsf{RCD}(-1,2)$ space with finite measure satisfying } $\lambda_1(X)=\frac{1}{4}h(X)^2>0$.

We claim that an example of (actually smooth) metric measure space with finite reference measure, verifying the $\mathsf{RCD}(-1,2)$ condition and the equality in Cheeger's inequality is given by the symmetric three-punctured sphere with hyperbolic metric, that we shall denote by $\mathbb{D}$. Moreover in this case 
\begin{equation}\label{eq:isotriang}
\lambda_1(\mathbb{D})=\frac{1}{4}h(\mathbb{D})^2>0\, .
\end{equation} 

The example is strongly inspired by \cite{Buser79} where sharpness of the Cheeger inequality was pointed out exhibiting a family of compact Riemannian manifolds almost attaining the inequality.

Let us briefly recall how a hyperbolic metric on the three-punctured sphere can be built, referring to \cite[Section 10.5]{Pri} for a more detailed construction and all the relevant background on hyperbolic geometry.\\ 
This hyperbolic manifold can be seen as a degenerate pair of hyperbolic pants, with cusps in place of the three boundary components. More in detail we can also obtain it considering a degenerate hexagon on the Poincar\'e disk model of the hyperbolic plane (i.e. we consider three points on the boundary of the disk equidistant with respect to the standard metric and connect them with hyperbolic geodesics) and gluing it with itself along the three boundary components (i.e. we consider the double of the starting triangle $\mathbb{T}$). Observe that the resulting Riemannian manifold, that we shall denote by $\mathbb{D}$, is a non compact, complete hyperbolic manifold. In particular it has constant sectional curvature $-1$ and therefore it is an $\mathsf{RCD}(-1,2)$ metric measure space when endowed with the canonical volume measure $\mathrm{vol}$. 

We claim that $\mathbb{D}$ has finite volume, in particular it holds that $\mathrm{vol}(\mathbb{D})=2\pi$. We just provide a sketch of the strategy to verify this conclusion, since the result is well known.

The more direct way to check this conclusion is by directly computing $\mathrm{vol}(\mathbb{T})=\pi$, using the explicit formulas for the Poincar\'e disk model, and then to argue that $\mathrm{vol}(\mathbb{D})=2\pi$, since $\mathbb{D}$ is the double of $\mathbb{T}$. Alternatively one can rely on a general version of Gauss-Bonnet formula \cite{Hu} taking into account the fact that $\mathbb{D}$ is homeomorphic to the sphere with three punctures and therefore it has Euler characteristic $\chi(\mathbb{D})=-1$. Therefore, denoting by $\mathsf{K}_{\mathbb{D}}$ the Gaussian curvature,
\begin{equation*}
-\mathrm{vol}(\mathbb{D})=\int_{\mathbb{D}} \mathsf{K}_{\mathbb{D}}\,\di\mathrm{vol}=2\pi\chi(\mathbb{D})=-2\pi.
\end{equation*}

We divide the verification of \eqref{eq:isotriang} in two steps.\\ 
First let us prove that $h(\mathbb{D})=1$. In order to do so we rely on the study of the isoperimetric problem on hyperbolic surfaces pursued in \cite{AdM}. Since $\mathbb{D}$ has three cusps (corresponding to the three punctures of the sphere), by the last part of the statement of \cite[Theorem 2.2]{AdM} (see also the remark after its proof) we get that, for any value of the area $0<v\le \pi=\mathrm{vol}(\mathbb D)/2$, it holds that 
\begin{equation*}
\Per(A)\ge \mathrm{vol}(A)=v,
\end{equation*}
for any set of finite perimeter $A$ such that $\mathrm{vol}(A)=v$. Moreover there are sets for which equality is attained in the above inequality (neighbourhoods of cusps bounded by horocycles). Therefore, by the very definition of the Cheeger constant, it holds $h(\mathbb{D})=1$.

We are thus left with the verification of the identity $\lambda_1(\mathbb{D})=1/4$. Observe that thanks to Cheeger's inequality it is sufficient to prove that $\lambda_1(\mathbb{D})\le 1/4$, the other inequality will follow from our estimate on the Cheeger constant. In order to do so we exhibit a sequence of Lipschitz functions $f_n:\mathbb{D}\to\R$ such that
\begin{equation*}
\int_{\mathbb{D}} f_n\,\di\mathrm{vol}=0,\;\;\;\int_{\mathbb{D}} f_n^2\,\di\mathrm{vol}=1,
\end{equation*}
for any $n\in\mathbb{N}$ and
\begin{equation*}
\int_{\mathbb{D}}|\nabla f_n|^2\,\di\mathrm{vol}\to 1/4,\;\;\;\text{as $n\to\infty$}\, .
\end{equation*}
The conclusion $\lambda_1(\mathbb{D})\le 1/4$ will follow from \eqref{eq:defla1Intro}.

Let us denote by $\lambda_1^D(\Omega)$ the first Dirichlet eigenvalue of the Laplacian on a smooth domain $\Omega$ contained in a Riemannian manifold. Recall that there is a variational characterization for $\lambda_1^D$ analogous to \eqref{eq:defla1Intro}.\\
As we already observed, $\mathbb{D}$ has constant Gaussian curvature $-1$. Therefore Cheng's inequality \cite[Theorem 1.1]{Che} applies and yields that for any $x\in\mathbb{D}$ and for any $r>0$ it holds
\begin{equation}\label{eq:cheng}
\lambda_1^D(B^{\mathbb{D}}_r(x))\le \lambda_1^D(B_r^{\mathbb{H}^2}(\bar{x}))\, ,
\end{equation}
where $B_r^{\mathbb{H}^2}(\bar{x})$ is the ball of radius $r$ and centre $\bar{x}$ in the hyperbolic plane. Moreover it is known (see for instance the top of \cite[p. 294]{Che}) that
\begin{equation}\label{eq:boundaboveeig}
\lambda_1^D(B_r^{\mathbb{H}^2}(\bar{x}))\le \frac{1}{4}+\left(\frac{2\pi}{r}\right)^2,
\end{equation}
for any $r>0$.
Combining \eqref{eq:cheng} with \eqref{eq:boundaboveeig} we infer that for any $\epsilon>0$ there exists $r>0$ such that for any $x\in\mathbb{D}$ it holds 

\begin{equation}\label{eq:secondbound}
\lambda_1^D(B^{\mathbb{D}}_r(x))\le \frac{1}{4}+\epsilon.
\end{equation}
Next we choose points $x_1,x_2\in\mathbb{D}$ such that $\di(x_1,x_2)>2r$, where we denoted by $\di$ the Riemannian distance induced by the hyperbolic metric on $\mathbb{D}$. By \eqref{eq:secondbound} and the variational characterization of the first Dirichlet eigenvalue we can find non negative Lipschitz functions $f_1^{\epsilon},f_2^{\epsilon}$ with compact support in $B_r(x_1)$ and $B_r(x_2)$ respectively and such that 
\begin{equation}
\int_{\mathbb{D}}(f_1^{\epsilon})^2\,\di\mathrm{vol}=\int_{\mathbb{D}}(f_2^{\epsilon})^2\,\di\mathrm{vol}=1
\end{equation}
and 
\begin{equation}
\int_{\mathbb{D}}|\nabla f_1^{\epsilon}|^2\,\di\mathrm{vol}\le \frac{1}{4}+\epsilon,\;\;\;\int_{\mathbb{D}}|\nabla f_2^{\epsilon}|^2\,\di\mathrm{vol}\le \frac{1}{4}+\epsilon.
\end{equation}
Next we observe that we can find coefficients $a_1^{\epsilon},a_2^{\epsilon}\in\R$ such that, setting $f^{\epsilon}:=a_1^{\epsilon}f_1^{\epsilon}+a_2^{\epsilon}f_2^{\epsilon}$, it holds
\begin{equation*}
\int_{\mathbb{D}}f^{\epsilon}\,\di\mathrm{vol}=0,\;\;\; \int_{\mathbb{D}}(f^{\epsilon})^2\,\di\mathrm{vol}=1
\end{equation*}
and 
\begin{equation*}
\int_{\mathbb{D}}|\nabla f^{\epsilon}|^2\,\di\mathrm{vol}\le \frac{1}{4}+\epsilon.
\end{equation*}
Since $f^{\epsilon}$ is an admissible competitor in the variational definition of $\lambda_1(\mathbb{D})$ and $\epsilon$ is arbitrary we infer that $\lambda_1(\mathbb{D})\le 1/4$, as desired.
\end{example}

\end{document}